\documentclass[12pt,a4paper]{article}
\usepackage{amsfonts,amsthm,amsmath,amssymb,amscd}
\usepackage{fancybox}
\usepackage[latin1]{inputenc}
\usepackage[usenames]{color}
\usepackage{graphicx}
\usepackage{enumerate}
\usepackage{float}
\usepackage{fullpage}
\usepackage{epsfig,amssymb} 
\usepackage{subcaption}

\newtheorem{theorem}{Theorem}[section]
\newtheorem{corollary}[theorem]{Corollary}
\newtheorem{lemma}[theorem]{Lemma}
\newtheorem{proposition}[theorem]{Proposition}

\newtheorem{rem}[theorem]{Remark}

\begin{document}
	
\title{\textbf{Renormalized Area of Catenoids in the Hyperbolic Space}}

\author{\'Alvaro P\'ampano and Aaron J. Tyrrell}
\date{\today}

\maketitle

\begin{figure}[H]
	\begin{center}
		\includegraphics[height=5.5cm]{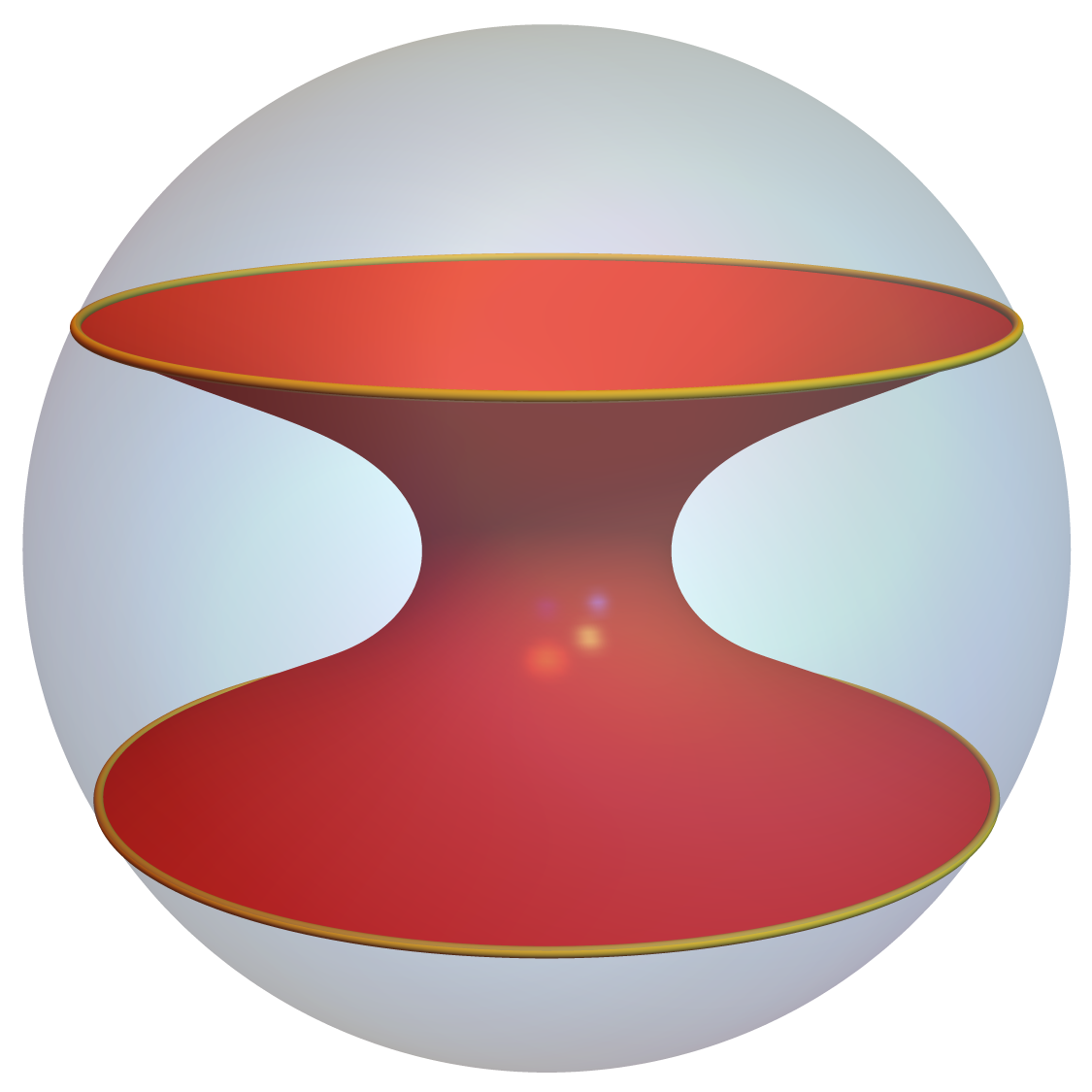}
	\end{center} 
\end{figure}

\begin{abstract} We show that the generating curves of non-totally geodesic spherical rotational minimal hypersurfaces (catenoids, for simplicity) of the hyperbolic spaces $\mathbb{H}^{2n+1}$ are $p$-elastic curves for $p=(2n-1)/(2n)$. We employ this variational characterization, together with the Chern--Gauss--Bonnet formulas for locally conformally flat manifolds, to present an explicit expression for the renormalized area of catenoids in terms of hyperelliptic integrals. Further analyzing these special integrals, we show that the renormalized area of catenoids varies continuously from negative infinity to twice the renormalized area of the totally geodesic hypersurfaces $\mathbb{H}^{2n}\subset\mathbb{H}^{2n+1}$. Therefore, we conclude that the renormalized area is not bounded below and that, when $n$ is even, the renormalized area of minimal hypersurfaces in $\mathbb{H}^{2n+1}$ does not have a sign.\\

\noindent{\emph{Keywords:} Chern--Gauss--Bonnet Formulas, Generalized Elastic Curves, Hyperelliptic Integrals, Renormalized Area.}\\
\noindent{\emph{MSC 2020:} 53C18, 53C42, 53A35.}
\end{abstract}

\section{Introduction}

Minimal submanifolds in hyperbolic spaces with prescribed boundary at infinity have been a fruitful topic of investigation since the seminal works of Anderson \cite{A1,A2}. These submanifolds have infinite area, but under certain assumptions near their boundary they possess a Hadamard regularization known as the renormalized area $\mathcal{A}_R$. The renormalized area of minimal submanifolds first arose in physics as a central tool in the AdS/CFT correspondence (see, for instance, \cite{Physics} and references therein).

Introduced by Graham and Witten \cite{Graham,GW} in the mathematics literature, the renormalized area $\mathcal{A}_R$ is a global invariant of the submanifold, provided that the submanifold is even-dimensional. The renormalized area of minimal surfaces in $\mathbb{H}^3$ was thoroughly studied in \cite{AM}, where an explicit formula was presented. For the case of $4$-dimensional minimal hypersurfaces in $\mathbb{H}^5$ there is also an explicit Gauss-Bonnet type formula for the renormalized area \cite{T}. More recently, a general treatment of renormalized area of even-dimensional minimal hypersurfaces has been developed in \cite{GT,New}, while of just asymptotically minimal ones can be found in \cite{PT}. However, there seem to be very few works in which the renormalized area is computed for specific examples. For instance, in \cite{GT,CQY,E}, the renormalized area $\mathcal{A}_R$ of totally geodesic hypersurfaces of $\mathbb{H}^{2n+1}$ has been computed, obtaining
$$\mathcal{A}_R(\mathbb{H}^{2n})=\frac{(-4\pi)^n n!}{(2n)!}\,.$$
This formula is accessible through direct calculations of expansions of truncated parts of $\mathbb{H}^{2n}$. (Actually, in \cite{GT,CQY,E} it is the renormalized volume that is computed. Both quantities coincide as shown in Section 7.1 of \cite{MK}.)

The main goal of this paper is to compute the renormalized area $\mathcal{A}_R$ of catenoids in $\mathbb{H}^{2n+1}$. Catenoids in $\mathbb{H}^{2n+1}$ are a one-parameter family of non-totally geodesic spherical rotational minimal hypersurfaces \cite{DCD}. Hence, our results will give a one variable function for $\mathcal{A}_R$ rather than just a specific value, as is the case of $\mathbb{H}^{2n}$. It will then be possible to extract some information regarding the renormalized area of minimal hypersurfaces, including intervals contained in the range of $\mathcal{A}_R$.

Consider the family of catenoids $\{M_c\}$, $c\in(0,\infty)$, in $\mathbb{H}^{2n+1}$ parameterized in terms of the parameter $c>0$ which essentially describes the radius of the neck of the catenoids (for details, see Part (iv) of Proposition \ref{prop} and the footnote on Page 9; see also the definition of the warping function in Theorem \ref{t1}). In Theorem \ref{t4}, we show that the renormalized area $\mathcal{A}_R$ of a catenoid $M_c$, $c\in(0,\infty)$, in $\mathbb{H}^{2n+1}$ is given by the hyperelliptic integral
$$\mathcal{A}_R(M_c)=\frac{2\pi^n}{(n-1)!c^{2n-1}}\int_0^\alpha \frac{q_n(u)}{\sqrt{u(-u^{2n}+c^2u+(2n-1)^2)}}\,du\,,$$
where $\alpha$ is the only positive root of $Q_{n,c}(u)=-u^{2n}+c^2u+(2n-1)^2$ and $q_n$ is the polynomial of degree $n(2n-1)$ given by
$$q_n(u)=\sum_{i=1}^n(-1)^i\begin{pmatrix} n \\ i \end{pmatrix}(2n-1)^{2(n-i)}(2i-1)u^{n(2i-1)}.$$
Although the integrand is singular at the endpoints, the hyperelliptic integral above is convergent for every $c\in(0,\infty)$ and positive integer $n$ (see Remark \ref{convergence}).

Further analyzing the asymptotic behavior of the hyperelliptic integral $\mathcal{A}_R(M_c)$ we prove in Theorem \ref{t5} that $\mathcal{A}_R(M_c)$ is a continuous function of the variable $c\in(0,\infty)$ satisfying
$$\lim_{c\to 0^+}\mathcal{A}_R(M_c)=-\infty\,,\quad\quad\quad \lim_{c\to \infty}\mathcal{A}_R(M_c)=2\mathcal{A}_R(\mathbb{H}^{2n})\,.$$
As a consequence, we deduce that the renormalized area of hypersurfaces in $\mathbb{H}^{2n+1}$ is not bounded below ($\mathcal{A}_R$ is not bounded above on the space of asymptotically minimal hypersurfaces either, since one can always deform a hypersurface fixing a neighborhood of its boundary at infinity and increasing its area arbitrarily). In addition, we also deduce that, when $n$ is even, the renormalized area of minimal hypersurfaces of $\mathbb{H}^{2n+1}$ does not have a sign.

The expression of $\mathcal{A}_R(M_c)$ given above is particularly enlightening when $n=1$, that is for catenoids $M_c$, $c\in(0,\infty)$, in $\mathbb{H}^3$. For this specific case, the integral appearing in $\mathcal{A}_R(M_c)$ is elliptic and can be rewritten as a combination of the standard complete elliptic integrals of the first ($K$) and second ($E$) kind, namely, (c.f., Theorem \ref{t2})
$$\mathcal{A}_R(M_c)=\frac{4\pi}{\sqrt{2\zeta^2-1}}\left((1-\zeta^2)K(\zeta)-E(\zeta)\right),$$
where
$$\zeta=\sqrt{\frac{c^2+\sqrt{c^2+4}}{2\sqrt{c^2+4}}\,}\in (\sqrt{2}/2,1).$$
Employing the well known identities of complete elliptic integrals (see, for instance, \cite{BF,GR}) we then show that the function $\mathcal{A}_R(M_c):c\in(0,\infty)\longmapsto \mathbb{R}$ increases continuously from $-\infty$ until it attains its maximum 
$$\mathcal{A}_R^+=-4\pi\sqrt{2\zeta_+^2-1}\,E(\zeta_+)\in(-4\pi,-\pi^2)\,,$$
where $\zeta_+\in(\sqrt{2}/2,1)$ is the unique number satisfying $K(\zeta_+)=2E(\zeta_+)$. The associated unique value $c_+\in(0,\infty)$ is approximately $1.3118$. The function $\mathcal{A}_R(M_c)$ then decreases approaching $-4\pi$ from above (see the graph of this function on Figure \ref{G2}). Moreover, we show in Corollary \ref{cor} that the maximum $\mathcal{A}_R^+$ is attained at the catenoid $M_{c_+}$ with maximum height $\Psi_1$ (defined in Proposition \ref{prop} as half of the Euclidean distance between the boundary circles in the ball model for $\mathbb{H}^3$). An illustration of the catenoid $M_{c_+}$ with maximum height and maximum renormalized area can be found in the front page. The geometric behavior of the function $\mathcal{A}_R(M_c)$ seems to be analogous in higher dimensions (see Figure \ref{G4} for the case $n=2$).

To achieve our main results regarding the expression of the renormalized area $\mathcal{A}_R$ of catenoids $M_c$, $c\in(0,\infty)$, in $\mathbb{H}^{2n+1}$, we will employ the Chern--Gauss--Bonnet formulas for locally conformally flat manifolds (see \cite{CC,CS,V} for details) and exploit a connection between rotational minimal hypersurfaces of space forms and the theory of (generalized) elastic curves (see \cite{AGP,AP} for the case of surfaces). 

The theory of generalized elastic curves originated with the works of the Bernoulli family and Euler. More precisely, D. Bernoulli suggested to Euler to analyze extrema of the functionals
$$\mathbf{\Theta}_p(\gamma)=\int_\gamma \kappa^p\,ds\,,$$
where $\kappa$ denotes the curvature of the curve $\gamma$ and $s$ is its arc length parameter. Nowadays, we refer to equilibria for $\mathbf{\Theta}_p$ as $p$-elastic curves and they are still a central field of research (see, for instance, \cite{AGP,AP} and references therein). Arguably, the case $p=2$, which is known as the classical bending energy, has received the most attention because of its physical meaning and because it appears in a vast number of applications (for a modern treatment, see \cite{LS}). Nonetheless, other values of $p$ have also appeared in the literature. For instance, the case $p=1/2$ which is closely related to the present paper, was first studied by Blaschke \cite{B}, who showed that equilibria in the Euclidean plane $\mathbb{R}^2$ are catenaries and, hence, the generating curves of rotational minimal surfaces in $\mathbb{R}^3$.

In \cite{AGP}, the generating curve of a spherical rotational surface in the hyperbolic space $\mathbb{H}^3$ was characterized as a $1/2$-elastic curve. In Theorem \ref{t1} below we extend this characterization to arbitrary dimensions, showing that the generating curves of catenoids in $\mathbb{H}^{2n+1}$ are $p$-elastic curves for $p=(2n-1)/(2n)\in[1/2,1)$. The non-constant curvatures of these curves satisfy a first-order ordinary differential equation, which allows us to make a change of variable in the hypersurface integrals arising in our Gauss--Bonnet type identities (see Proposition \ref{prop2}), concluding with the expression of $\mathcal{A}_R(M_c)$ given above.

\section{Area of Minimal Hypersurfaces}

Let $n$ be a fixed positive integer and denote by $\mathbb{H}^{2n+1}$ the $(2n+1)$-dimensional simply-connected hyperbolic space of constant sectional curvature $-1$. Throughout this paper, we will consider the hyperquadric model for $\mathbb{H}^{2n+1}$, as described below.

Let $(x_1,...,x_{2n+1},z)$ be the standard coordinates of $\mathbb{R}^{2n+2}$. Endow the space $\mathbb{R}^{2n+2}$ with the canonical index one metric
$$g=\sum_{i=1}^{2n+1}dx_i^2 - dz^2\,.$$
The pair $(\mathbb{R}^{2n+2},g)$ is the Lorentz-Minkowski $(2n+2)$-space, simply denoted by $\mathbb{L}^{2n+2}$. The hyperbolic space $\mathbb{H}^{2n+1}$ is the space-like hypersurface of $\mathbb{L}^{2n+2}$ given by the top part of the hyperquadric
$$\mathbb{H}^{2n+1}=\{(x_1,...,x_{2n+1},z)\in\mathbb{R}^{2n+2}\,\lvert\,\sum_{i=1}^{2n+1}x_i^2-z^2=1\,,z>0\}\,,$$
endowed with the Riemannian metric induced from $\mathbb{L}^{2n+2}$. For later use, the hyperbolic spaces of even-dimension $\mathbb{H}^{2n}$ will be considered to be $\mathbb{H}^{2n+1}\cap\{x_2=0\}$. In particular, the hyperbolic plane $\mathbb{H}^2$ will be $\mathbb{H}^3\cap\{x_2=0\}$.

For visualization purposes, when $n=1$, we will identify $\mathbb{H}^3$ with the unit ball $\mathbb{B}\subset\mathbb{R}^3$ centered at the origin and endowed with the Poincar\'e metric by means of the isometry
\begin{equation}\label{ident}
	(x_1,x_2,x_3,z)\in\mathbb{H}^3\subset\mathbb{R}^4\longmapsto \frac{1}{1+z}(x_1,x_2,x_3)\in\mathbb{B}\subset\mathbb{R}^3\,.
\end{equation}
This identification gives rise to the Poincar\'e ball model for $\mathbb{H}^3$. If $x_2=0$, the above isometry provides the Poincar\'e disc model for the hyperbolic plane $\mathbb{H}^2$.

Consider a smooth and compact (with boundary) hypersurface $M$ embedded in $\mathbb{H}^{2n+1}$. The area of the hypersurface $M$ in $\mathbb{H}^{2n+1}$ is
\begin{equation}\label{area}
	\mathcal{A}(M)=\int_M 1\,dA\,,
\end{equation}
where $dA$ is the volume element on $M$ induced from the metric of $\mathbb{H}^{2n+1}\subset\mathbb{L}^{2n+2}$. The scalar curvature of $M$ is defined as
\begin{equation}\label{lambda}
	\lambda=\frac{1}{2n(2n-1)}\sum_{i=1}^{2n}\sum_{j=1}^{2n} {\rm Rm}(e_i,e_j,e_i,e_j)=\frac{1}{2n(2n-1)}\sum_{i=1}^{2n} {\rm Ric}(e_i,e_i)\,,
\end{equation}
where ${\rm Rm}$ stands for the Riemann curvature of $M$, ${\rm Ric}$ is the Ricci tensor, and $\{e_1,...,e_{2n}\}$ is any orthonormal frame of the tangent bundle $TM$. Observe that, for convenience, here we are following \cite{PT} and have opted to include a coefficient depending on the dimension in our definition of the scalar curvature.

Let $\xi$ be a unit normal vector field to $M$ and denote by $B$ the second fundamental form of $M$. The eigenvalues of $B$ are the principal curvatures of $M$, which are represented by $\kappa_1,...,\kappa_{2n}$. The mean curvature $H$ of $M$ is
\begin{equation}\label{H}
	H=\frac{1}{2n}{\rm Trace}(B)=\frac{1}{2n}\sum_{i=1}^{2n}\kappa_i\,.
\end{equation}
A hypersurface $M$ is minimal if the mean curvature $H$ is identically zero.

Employing the Gauss equation, we can relate the scalar curvature $\lambda$ of a minimal hypersurface $M$ to the squared norm of the second fundamental form $\lvert B\rvert^2$. Indeed,
\begin{eqnarray*}
	\frac{1}{2n(2n-1)}\lvert B\rvert^2&=&\frac{1}{2n(2n-1)}\sum_{i=1}^{2n}\kappa_i^2=\frac{-1}{2n(2n-1)}\sum_{i\neq j} \kappa_i\kappa_j\\
	&=&\frac{1}{2n(2n-1)}\sum_{i\neq j}\left(\widetilde{\rm Rm}(e_i,e_j,e_i,e_j)-{\rm Rm}(e_i,e_j,e_i,e_j)\right)\\
	&=&\frac{1}{2n(2n-1)}\sum_{i\neq j}(-1)-\lambda=-1-\lambda\,,
\end{eqnarray*}
where $\widetilde{\rm Rm}$ is the Riemann curvature of $\mathbb{H}^{2n+1}$. The second equality is a consequence of the minimality of $M$. In fact, since $M$ is minimal, we deduce from \eqref{H} that $0=(\kappa_1+\cdots+\kappa_{2n})^2$ holds and, expanding the right-hand side, we obtain the desired equality. The third equality is the Gauss equation applied to the frame of principal directions $\{e_1,...,e_{2n}\}$.

In conclusion, for a minimal hypersurface $M$ of $\mathbb{H}^{2n+1}$, we have the following relation
\begin{equation}\label{lambdaB}
	\lambda=-1-\frac{1}{2n(2n-1)}\lvert B\rvert^2\,,
\end{equation}
between its scalar curvature and the squared norm of its second fundamental form $\lvert B\rvert^2$.

From this identity, we will next deduce a formula for the area of $M$.

\begin{proposition}\label{prop1} Let $M$ be a compact minimal hypersurface embedded in $\mathbb{H}^{2n+1}$. The area of $M$ satisfies
	\begin{equation}\label{formula}
		\mathcal{A}(M)=(-1)^n\int_M \lambda^n\,dA-\sum_{i=1}^n \frac{1}{\left(2n(2n-1)\right)^i}\begin{pmatrix} n \\ i \end{pmatrix}\int_M \lvert B\rvert^{2i}\,dA\,,
	\end{equation}
where $\lambda$ is the scalar curvature of $M$ and $\lvert B\rvert^2$ is the squared norm of the second fundamental form of $M$.
\end{proposition}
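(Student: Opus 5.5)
The plan is to derive \eqref{formula} pointwise from the identity \eqref{lambdaB}, which holds because $M$ is minimal, and then integrate over $M$. Set $t=\frac{1}{2n(2n-1)}\lvert B\rvert^2$. Then \eqref{lambdaB} says $\lambda=-(1+t)$, and hence at every point of $M$
$$(-1)^n\lambda^n=(1+t)^n.$$

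Expanding the right-hand side with the binomial theorem gives
$$(-1)^n\lambda^n=1+\sum_{i=1}^n\begin{pmatrix} n \\ i \end{pmatrix}t^i=1+\sum_{i=1}^n \frac{1}{\left(2n(2n-1)\right)^i}\begin{pmatrix} n \\ i \end{pmatrix}\lvert B\rvert^{2i}.$$
Solving for the constant term, we get the pointwise identity
$$1=(-1)^n\lambda^n-\sum_{i=1}^n \frac{1}{\left(2n(2n-1)\right)^i}\begin{pmatrix} n \\ i \end{pmatrix}\lvert B\rvert^{2i}.$$

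Now integrate this identity over $M$ against $dA$. The left-hand side integrates to $\mathcal{A}(M)$ by \eqref{area}. The right-hand side splits into finitely many integrals, each finite because $M$ is compact and $\lambda$ and $\lvert B\rvert^2$ are smooth, hence bounded. By linearity of the integral this yields \eqref{formula}.

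No step here is a genuine obstacle. The only points needing care are the sign bookkeeping $(-1)^n\lambda^n=(1+t)^n$ and the fact that $\lambda$ carries the normalization from \eqref{lambda}, which is exactly the one used in deriving \eqref{lambdaB}.
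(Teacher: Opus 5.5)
Your proposal is correct and is essentially the paper's proof: both substitute \eqref{lambdaB} into $\lambda^n$, expand $(1+t)^n$ binomially, multiply by $(-1)^n$, and integrate over $M$. Your remark that each integral is finite because $M$ is compact and the integrands are smooth is a small addition that the paper leaves implicit.
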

\textit{Proof.} Let $M$ be a minimal hypersurface in $\mathbb{H}^{2n+1}$. From the relation \eqref{lambdaB} between the scalar curvature and the squared norm of the second fundamental form, we get
\begin{eqnarray*}
	\lambda^n&=&\left(-1-\frac{\lvert B\rvert^2}{2n(2n-1)}\right)^n=(-1)^n\left(1+\frac{\lvert B\rvert^2}{2n(2n-1)}\right)^n=(-1)^n\sum_{i=0}^{n}\begin{pmatrix} n \\ i \end{pmatrix} \frac{\lvert B\rvert^{2i}}{\left(2n(2n-1)\right)^i}\\
	&=&(-1)^n+(-1)^n\sum_{i=1}^n \begin{pmatrix} n \\ i \end{pmatrix} \frac{\lvert B\rvert^{2i}}{\left(2n(2n-1)\right)^i}\,.
\end{eqnarray*}
Multiplying by $(-1)^n$ on both sides, integrating over $M$, and rearranging we prove the statement. \hfill$\square$
\\

The above formula will be essential in the computation of the renormalized area. We will next further develop it for the kind of hypersurfaces under consideration, namely, rotational minimal hypersurfaces of $\mathbb{H}^{2n+1}$.

A hypersurface $M$ in $\mathbb{H}^{2n+1}$ is a (spherical) rotational hypersurface if it stays invariant under the action of the orthogonal group $O(2n)$, considered as a subgroup of isometries of the ambient space $\mathbb{H}^{2n+1}$. The orbit of a point in $\mathbb{H}^{2n+1}$ under the action of $O(2n)$ is a $(2n-1)$-dimensional sphere. The hypersurface $M$ can be described as the evolution of an arc length parameterized curve $\gamma$ in $\mathbb{H}^2=\mathbb{H}^3\cap\{x_2=0\}$ under the action of $O(2n)$. We refer to the curve $\gamma(s)=(x_1(s),0,x_3(s),z(s))$ as the generating curve (or, profile curve) of $M$.

From Proposition 3.2 of \cite{DCD}, we have that a rotational hypersurface has, at most, two distinct principal curvatures which, up to sign depending on the choice of orientation, are
\begin{equation}\label{pcurvatures}
	\kappa_1=\frac{x_1''-x_1}{\sqrt{1+x_1^2-(x_1')^2}}\,,\quad\quad\quad \kappa_2=\cdots=\kappa_{2n}=-\frac{\sqrt{1+x_1^2-(x_1')^2}}{x_1}\,.
\end{equation}
Here, we are denoting by $\left(\,\right)'$ the derivative with respect to the arc length parameter $s$ of the generating curve $\gamma$. The principal curvature $\kappa_1$ corresponds to the curvature $\kappa$ of $\gamma$ (up to re-orientation of this curve). 

It follows from \eqref{H} and \eqref{pcurvatures} that if a rotational hypersurface $M$ of $\mathbb{H}^{2n+1}$ is minimal, then
$$0=\kappa_1+\kappa_2+\cdots+\kappa_{2n}=\kappa_1+(2n-1)\kappa_2=\kappa+(2n-1)\kappa_2\,,$$
and so 
\begin{equation}\label{pk}
	\kappa_1=\kappa\,,\quad\quad\quad \kappa_i=-\frac{\kappa}{2n-1}\,,
\end{equation} 
for every $i=2,...,2n$.

We then specialize Proposition \ref{prop1} for rotational minimal hypersurfaces.

\begin{proposition}\label{prop2} Let $M$ be a compact part of a rotational minimal hypersurface in $\mathbb{H}^{2n+1}$. Then the area of $M$ is given by
	\begin{equation}\label{formulak}
		\mathcal{A}(M)=(-1)^n\int_M \lambda^n\,dA-\sum_{i=1}^{n}\frac{1}{(2n-1)^{2i}}\begin{pmatrix} n \\ i \end{pmatrix}\int_M \kappa^{2i}\,dA\,,
	\end{equation}
where $\lambda$ is the scalar curvature of $M$ and $\kappa$ is the curvature of the generating curve of $M$.
\end{proposition}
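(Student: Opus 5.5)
The plan is to specialize Proposition \ref{prop1} by computing $\lvert B\rvert^2$ explicitly for a rotational minimal hypersurface, using the relations \eqref{pk} between the principal curvatures and the curvature $\kappa$ of the generating curve. Once $\lvert B\rvert^2$ is written as a multiple of $\kappa^2$, formula \eqref{formulak} should follow by direct substitution into \eqref{formula}.

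First, since $M$ is a compact part of a rotational minimal hypersurface, it is in particular a compact minimal hypersurface of $\mathbb{H}^{2n+1}$, so Proposition \ref{prop1} applies. Its proof only used the pointwise identity \eqref{lambdaB}, which holds on any minimal piece. Next, I will use \eqref{pk}: in a principal frame one has $\kappa_1=\kappa$ and $\kappa_i=-\kappa/(2n-1)$ for $i=2,\dots,2n$. Summing the squares gives
$$\lvert B\rvert^2=\kappa^2+(2n-1)\frac{\kappa^2}{(2n-1)^2}=\kappa^2\,\frac{2n-1+1}{2n-1}=\frac{2n}{2n-1}\,\kappa^2\,.$$
The orientation sign ambiguity in \eqref{pcurvatures} is harmless here, because only squares appear.

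Then I substitute this into each term of \eqref{formula}. This gives
$$\frac{\lvert B\rvert^{2i}}{(2n(2n-1))^i}=\frac{(2n)^i\kappa^{2i}}{(2n-1)^i\,(2n)^i(2n-1)^i}=\frac{\kappa^{2i}}{(2n-1)^{2i}}\,,$$
and summing over $i$ with the binomial coefficients yields exactly \eqref{formulak}. The term $(-1)^n\int_M\lambda^n\,dA$ is unchanged.

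There is no real obstacle. The only point requiring care is to confirm that $\kappa$, the curvature of the generating curve $\gamma$, coincides with the principal curvature $\kappa_1$ up to sign, and that $\kappa$ is regarded as a function on $M$ that is constant along the $O(2n)$-orbits. Both facts are stated right after \eqref{pcurvatures}, so the argument reduces to the algebra above.
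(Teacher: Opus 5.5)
Your proposal is correct and follows the paper's own proof: both compute $\lvert B\rvert^2=\frac{2n}{2n-1}\kappa^2$ from the relations $\kappa_1=\kappa$, $\kappa_i=-\kappa/(2n-1)$ for $i=2,\dots,2n$, and substitute this into formula \eqref{formula} of Proposition \ref{prop1}. Your explicit simplification $\lvert B\rvert^{2i}/(2n(2n-1))^i=\kappa^{2i}/(2n-1)^{2i}$ is exactly the step the paper summarizes as ``substitute and simplify.''
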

\textit{Proof.} Let $M$ be a part of a rotational minimal hypersurface in $\mathbb{H}^{2n+1}$. From $\kappa_1=\kappa$ and $\kappa_2=\cdots=\kappa_{2n}=-\kappa/(2n-1)$ (see \eqref{pk}), we compute
\begin{equation}\label{B2}
	\lvert B\rvert^2=\sum_{i=1}^{2n}\kappa_i^2=\kappa^2+\sum_{i=2}^{2n}\frac{\kappa^2}{(2n-1)^2}=\frac{2n}{2n-1}\,\kappa^2\,.
\end{equation}
We substitute this on \eqref{formula} and simplify to prove the result. \hfill$\square$
\\

Rotational minimal hypersurfaces $M$ in $\mathbb{H}^{2n+1}$ approach the ideal boundary $\partial_\infty\mathbb{H}^{2n+1}$, and so their areas are infinite. Nonetheless, the area functional possesses a well defined Hadamard regularization, typically referred to as the renormalized area.

The renormalized area $\mathcal{A}_R$ of a hypersurface $M$ is the constant term in the expansion of the area of the truncated hypersurfaces $M_\epsilon=M\cap\{z\leq 1/\epsilon\}$, for $\epsilon>0$ sufficiently small (recall that we are using the hyperquadric model for $\mathbb{H}^{2n+1}\subset\mathbb{L}^{2n+2}$). The truncated hypersurfaces $M_\epsilon$ are compact and embedded in $\mathbb{H}^{2n+1}$. Hence, we can use \eqref{formulak} to understand their area.

If the curvature $\kappa$ of the generating curve of a rotational minimal hypersurface $M$ vanishes identically, $M$ is totally geodesic, that is, an even-dimensional hyperbolic space $\mathbb{H}^{2n}$. The renormalized area of $\mathbb{H}^{2n}$ is well known \cite{CQY,E} (see also Appendix A below). On the contrary, if the curvature $\kappa$ of the generating curve of the rotational minimal hypersurface is not identically zero, we need to understand the integrals of its powers on the right-hand side of \eqref{formulak}. For simplicity, we refer to non-totally geodesic (spherical) rotational minimal hypersurfaces $M$ of $\mathbb{H}^{2n+1}$ as catenoids. 

\section{Variational Characterization of Catenoids}

In this section we will describe a variational characterization of the generating curves of catenoids in $\mathbb{H}^{2n+1}$. We will see that these curves are $p$-elastic curves for $p=(2n-1)/(2n)$. Therefore, we begin by briefly recalling the theory of $p$-elastic curves in the hyperbolic plane $\mathbb{H}^2$.

Consider the $p$-energy functional 
\begin{equation}\label{energy}
	\mathbf{\Theta}_p(\gamma)=\int_\gamma \kappa^p\,ds\,,
\end{equation}
for $p=(2n-1)/(2n)$ acting on the space of convex curves immersed in $\mathbb{H}^2$. Regardless of the boundary conditions, a curve $\gamma$ with non-constant curvature $\kappa$ which is critical for $\mathbf{\Theta}_p$ with $p=(2n-1)/(2n)$ must satisfy the first-order ordinary differential equation
\begin{equation}\label{ODE}
	(u')^2=\frac{4u^2}{(2n-1)^2}\,Q_{n,a}(u)\,,
\end{equation}
where $u=\kappa^{1/n}$ is the normalized curvature and $Q_{n,a}$ is the polynomial
\begin{equation}\label{Q}
	Q_{n,a}(u)=-u^{2n}+4n^2 a u+(2n-1)^2\,. 
\end{equation}
The constant $a\in\mathbb{R}$ in \eqref{Q} represents the magnitude of the momentum, which is a certain vector field in $\mathbb{L}^3$. For our purposes, it will be sufficient to consider it to be positive. Hence, we introduce the constant $c=2n\sqrt{a}>0$ as our parameter for the family of critical curves with $a>0$, and denote $Q_{n,c}=Q_{n,a}$. In this case, a simple analysis of the polynomial $Q_{n,c}$ given in \eqref{Q} shows that it only possesses one positive root $\alpha$. For convenience, we take the arc length parameter $s$ of the critical curves to satisfy the initial condition $u(0)=\alpha$ for \eqref{ODE}.

We refer to equation \eqref{ODE} as the Euler--Lagrange equation\footnote{Actually, equation \eqref{ODE} is the first integral of the \emph{proper} Euler--Lagrange equation associated to $\mathbf{\Theta}_p$. When the curvature $\kappa$ (and, hence, $u$) is not constant, both equations are equivalent.} associated to $\mathbf{\Theta}_p$. Curves $\gamma$ whose non-constant curvature $\kappa$ satisfies the Euler--Lagrange equation \eqref{ODE} for $a>0$ are called $n$-catenaries, in analogy with their Euclidean counterpart. The functional $\mathbf{\Theta}_{1/2}$ ($n=1$) acting on curves in the plane $\mathbb{R}^2$ was studied by Blaschke \cite{B}, who proved that critical curves are catenaries. In \cite{AGP}, the functional $\mathbf{\Theta}_{1/2}$ was extended to act on curves immersed in Riemannian and Lorentzian space forms and it was shown that critical curves are the generating curves of rotational surfaces with zero mean curvature (see also \cite{MP} for an analysis of the functional $\mathbf{\Theta}_{1/2}$ acting on the space of curves in $\mathbb{H}^2$ with fixed length).

A $n$-catenary associated to the parameter $c=2n\sqrt{a}>0$ in the hyperbolic plane $\mathbb{H}^2$ can be parameterized in terms of its arc length parameter, up to rigid motions of $\mathbb{H}^2$, as
\begin{equation}\label{param}
	\gamma_c(s)=\frac{1}{c\sqrt{u}}\left(2n-1,0,\sqrt{c^2 u+(2n-1)^2}\sinh v(s),\sqrt{c^2 u+(2n-1)^2}\cosh v(s)\right),
\end{equation}
where $u\equiv u(s)=\kappa^{1/n}(s)$ is the normalized curvature and
$$v(s)=-c\int_0^s \frac{u^{n+1/2}}{c^2 u+(2n-1)^2}\,d\sigma\,,$$
represents the (hyperbolic) angular progression. The parameterization \eqref{param} can be obtained adapting the computations of Langer and Singer \cite{LS} involving Killing vector fields along curves to the present case (a general computation can be found in \cite{AP}; see also the proof of Theorem \ref{t1} below).

From the Euler--Lagrange equation \eqref{ODE} and the parameterization of $n$-catenaries given above, we can deduce some geometric properties of these curves.

\begin{proposition}\label{prop} Let $\widetilde{\gamma}_c\subset\mathbb{B}$ be the identification of the $n$-catenary $\gamma_c$ for $c>0$ as a curve immersed in the Poincar\'e disc model for the hyperbolic plane $\mathbb{H}^2$. Then, up to rigid motions of $\mathbb{H}^2$:
	\begin{enumerate}[(i)]
		\item The curvature $\kappa_c$ of $\widetilde{\gamma}_c$ increases monotonically from $0$, when $s\to -\infty$, to its maximum $\alpha^n$, at $s=0$, and then decreases monotonically to $0$, as $s\to \infty$.
		\item When $\kappa_c\to 0^+$, the trajectory of $\widetilde{\gamma}_c$ approaches the ideal boundary of $\mathbb{H}^2$ at a right angle. At the maximum curvature $\alpha^n$, the trajectory of $\widetilde{\gamma}_c$ meets $\{x_3=0\}$	orthogonally.
		\item The curve $\widetilde{\gamma}_c$ is symmetric with respect to $\{x_3=0\}$.
		\item The minimum (Euclidean) distance from $\widetilde{\gamma}_c$ to the center of the Poincar\'e disc is given by
		$$\Phi_n(c)=\frac{2n-1}{c\sqrt{\alpha}+\sqrt{c^2\alpha+(2n-1)^2}}=\frac{2n-1}{c\sqrt{\alpha}+\alpha^n}=\frac{1}{2n-1}\left(\alpha^n-c\sqrt{\alpha}\right).$$
		\item The maximum (Euclidean) distance from $\widetilde{\gamma}_c$ to $\{x_3=0\}$ (for simplicity, the height) is given by
		$$\Psi_n(c)=\tanh\left(\frac{(2n-1)c}{2}\int_0^\alpha \frac{u^n}{(c^2 u+(2n-1)^2)\sqrt{u\, Q_{n,c}(u)}}\,du\right).$$
	\end{enumerate}
\end{proposition}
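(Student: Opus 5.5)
Everything will be read off from the Euler--Lagrange equation \eqref{ODE}, the parameterization \eqref{param}, and the isometry \eqref{ident} restricted to $x_2=0$, which sends a point $(x_1,0,x_3,z)$ to $(x_1,x_3)/(1+z)$ in the Poincar\'e disc.

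\emph{Part (i).} Since $u=\kappa^{1/n}$, it suffices to study $u$. By \eqref{ODE}, $u'$ can vanish only where $u=0$ or $Q_{n,c}(u)=0$. On $u>0$ the only zero of $Q_{n,c}$ is $\alpha$, and $Q_{n,c}>0$ on $(0,\alpha)$, because $Q_{n,c}(0)=(2n-1)^2>0$ and $\alpha$ is the unique positive root. With $u(0)=\alpha$, the point $s=0$ is a turning point, so $u$ has a maximum there. Differentiating \eqref{ODE} gives $u''(0)=\frac{2}{(2n-1)^2}\alpha^2Q'_{n,c}(\alpha)<0$, hence $u$ decreases on both sides. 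Near $u=0$ we have $u'\sim \mp\frac{2}{2n-1}(2n-1)u=\mp 2u$, so $u$ decays like $e^{-2|s|}$. It therefore never reaches $0$ in finite arc length, which gives $\kappa\to0$ as $s\to\pm\infty$. Since \eqref{ODE} is autonomous and even in $u'$, the solution satisfies $u(-s)=u(s)$.

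\emph{Parts (ii) and (iii).} Because $u$ is even, $v$ is odd, so \eqref{param} gives $x_1(-s)=x_1(s)$, $z(-s)=z(s)$ and $x_3(-s)=-x_3(s)$. This is reflection symmetry through $\{x_3=0\}$, which is (iii). At $s=0$ we have $x_3=0$, and symmetry forces the tangent to be orthogonal to $\{x_3=0\}$. For the behaviour at infinity, the map $s\mapsto v(s)$ converges as $s\to\infty$, since its integrand is $O(u^{n+1/2})$, which decays exponentially. Meanwhile $x_1,z\sim C/\sqrt{u}\to\infty$. The disc image is
\[
\frac{\big(2n-1,\ \sqrt{c^2u+(2n-1)^2}\sinh v\big)}{c\sqrt u+\sqrt{c^2u+(2n-1)^2}\cosh v}.
\]
As $u\to0$ it tends to $(1,\tanh v_\infty)$ divided by the normalizing factor, that is, to the boundary point $(1,\sinh v_\infty)/\cosh v_\infty$. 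For the angle I will expand in $\sqrt u$ and check that the tangent direction becomes radial. Equivalently, I can argue intrinsically: $\kappa\to0$ together with the asymptotic behaviour should make the curve asymptotic to a geodesic, and geodesics meet the ideal boundary at right angles. The explicit expansion is the step I expect to be messiest, and I would first try the geodesic-comparison argument.

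\emph{Parts (iv) and (v).} For (iv), the Euclidean distance to the origin is $\tanh(d/2)$, where $\cosh d=z$. Since $z=\frac{\sqrt{c^2u+(2n-1)^2}}{c\sqrt u}\cosh v$, the minimum is attained at $s=0$, where $v=0$ and $u=\alpha$ is maximal, and $\sqrt{c^2u+(2n-1)^2}/\sqrt u$ is decreasing in $u$. Evaluating there gives
\[
\frac{2n-1}{c\sqrt\alpha+\sqrt{c^2\alpha+(2n-1)^2}}.
\]
The remaining equalities follow from $Q_{n,c}(\alpha)=0$, i.e. $\alpha^{2n}=c^2\alpha+(2n-1)^2$, by rationalizing the denominator. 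For (v), the height $x_3/(1+z)$ increases to its limit as $s\to\infty$, and that limit is $\tanh v_\infty$ up to sign. I will convert $v_\infty$ into a $u$-integral using $ds=-\frac{(2n-1)\,du}{2u\sqrt{Q_{n,c}}}$ for $s>0$, which yields
\[
|v_\infty|=\frac{(2n-1)c}{2}\int_0^\alpha\frac{u^{n}}{(c^2u+(2n-1)^2)\sqrt{u\,Q_{n,c}(u)}}\,du.
\]
Monotonicity of the height would follow from checking the sign of its derivative along the curve.
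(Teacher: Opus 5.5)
Your proposal is correct and follows the paper's route: (i) comes from the turning points of \eqref{ODE}, and (ii)--(v) are read off \eqref{param} via \eqref{ident}. You actually supply the computations the paper calls immediate, whereas the paper's proof spends its effort verifying that \eqref{param} is a unit-speed curve in $\mathbb{H}^2$ whose curvature solves \eqref{ODE}, which you take as given from the preceding text.

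For the right angle in (ii), use the expansion rather than the heuristic geodesic comparison. Since $v-v_\infty=O(u^{n+1/2})$, the disc image is $P_\infty\bigl(1-\frac{c\sqrt{u}}{(2n-1)\cosh v_\infty}\bigr)+O(u)$ with $P_\infty=(1,\sinh v_\infty)/\cosh v_\infty$, so the curve reaches the boundary radially. The monotonicity you left unchecked in (v) is likewise immediate: write $|h|=\sinh|v|/(\cosh|v|+t)$ with $t=c\sqrt{u}/\sqrt{c^2u+(2n-1)^2}$; then $|v|$ increases and $t$ decreases for $s>0$, and both changes increase $|h|$.
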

\textit{Proof.} Consider a $n$-catenary $\gamma_c$ for $c=2n\sqrt{a}>0$. By definition, the non-constant curvature $\kappa_c$ of $\gamma_c$ must satisfy the differential equation \eqref{ODE}, where $u=\kappa_c^{1/n}$ (recall that $\gamma_c$ is convex and so $\kappa_c$ is positive and so is $u$). If the curvature $\kappa_c$ attains its maximum or minimum, then $u'=0$ must hold. From \eqref{ODE}, this happens when either $u=0$ (understood as a limiting case) or when the polynomial $Q_{n,c}(u)=0$ at some positive value of $u$, hence $u=\alpha$. This shows property (i). 

The remaining properties follow immediately from the parameterization \eqref{param} and the identification \eqref{ident}, when necessary. Hence, it suffices to show that \eqref{param} is an arc length parameterization of the $n$-catenary $\gamma_c$ for $c>0$. First, observe that from \eqref{param}, $g(\gamma_c(s),\gamma_c(s))=-1$ and so the curve is contained in $\mathbb{H}^2$. Further, differentiating \eqref{param} with respect to $s$ and using the definition of the hyperbolic angular progression $v(s)$, we obtain $g(\gamma_c'(s),\gamma_c'(s))=1$, which verifies that \eqref{param} is a parameterization by arc length. Finally, computing the second derivative and employing the Gauss formula to relate the curvature of $\gamma_c$ as a curve in $\mathbb{L}^4$ and as a curve in $\mathbb{H}^2\subset\mathbb{H}^3$, we deduce that the normalized curvature $u_c=\kappa_c^{1/n}$ of \eqref{param} is precisely a solution of \eqref{ODE}. Therefore, up to rigid motions, the curve given in \eqref{param} is the $n$-catenary $\gamma_c$. \hfill$\square$
\\

\begin{figure}[h!]
	\centering
	\includegraphics[height=5.58cm]{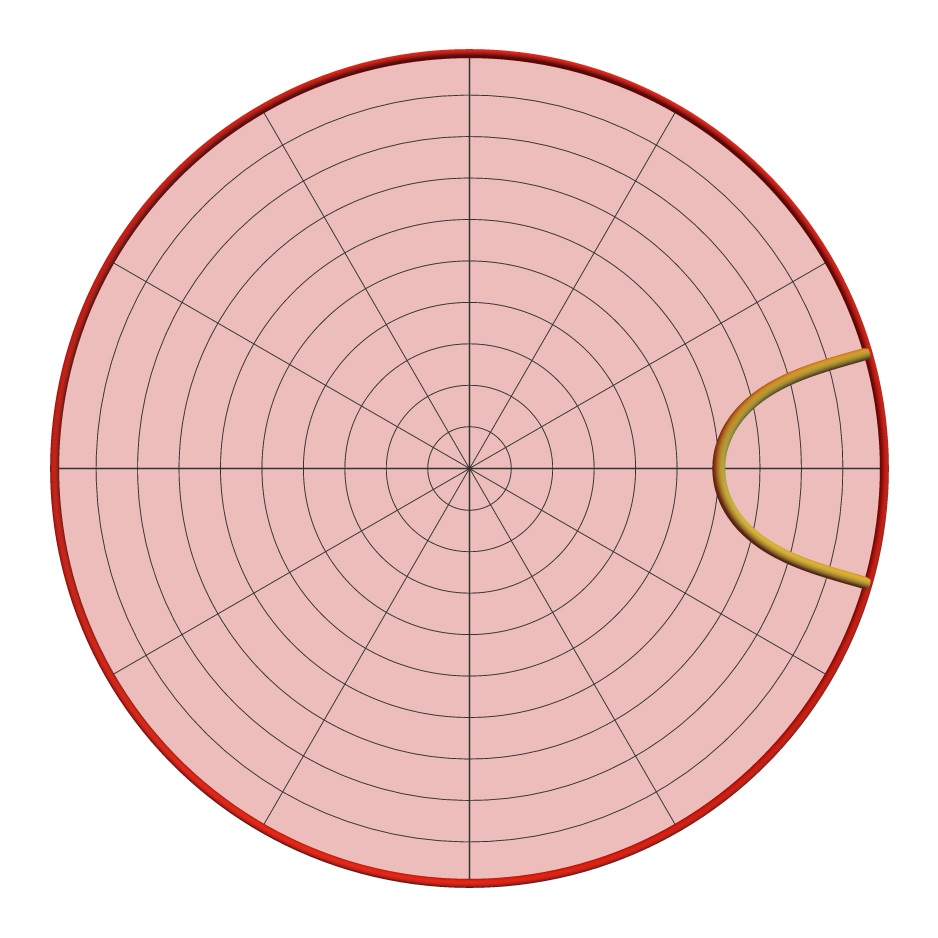}
	\includegraphics[height=5.58cm]{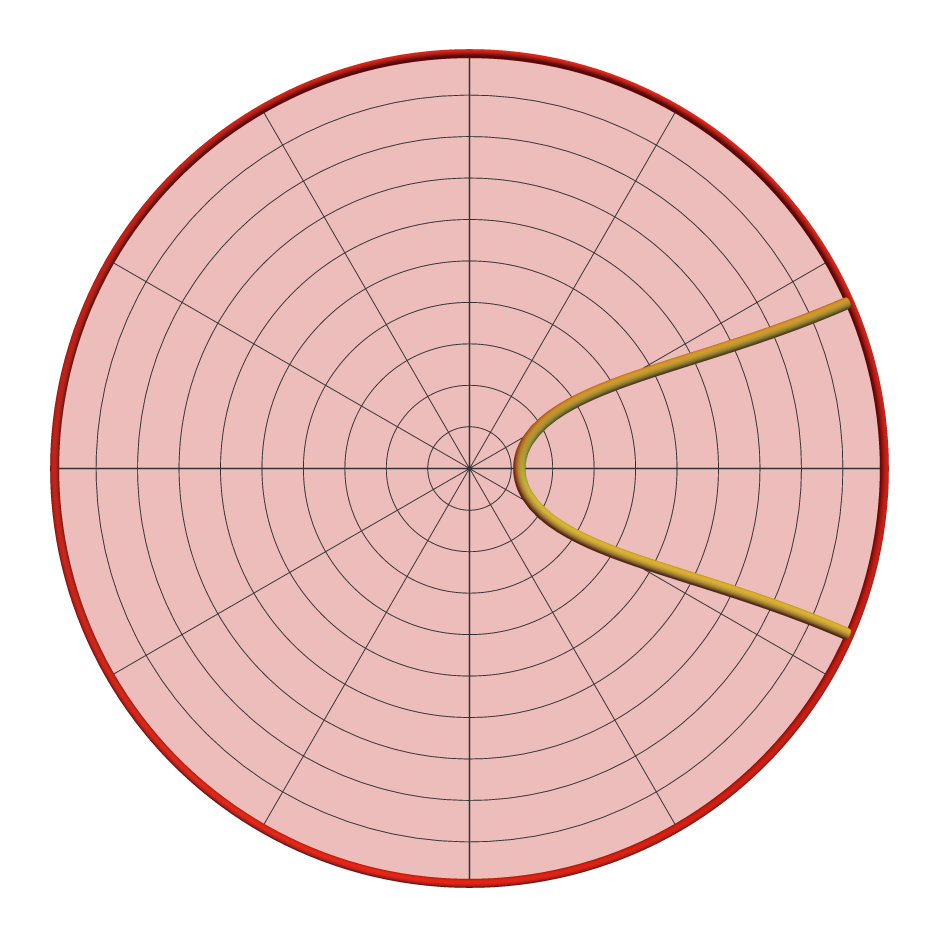}
	\includegraphics[height=5.58cm]{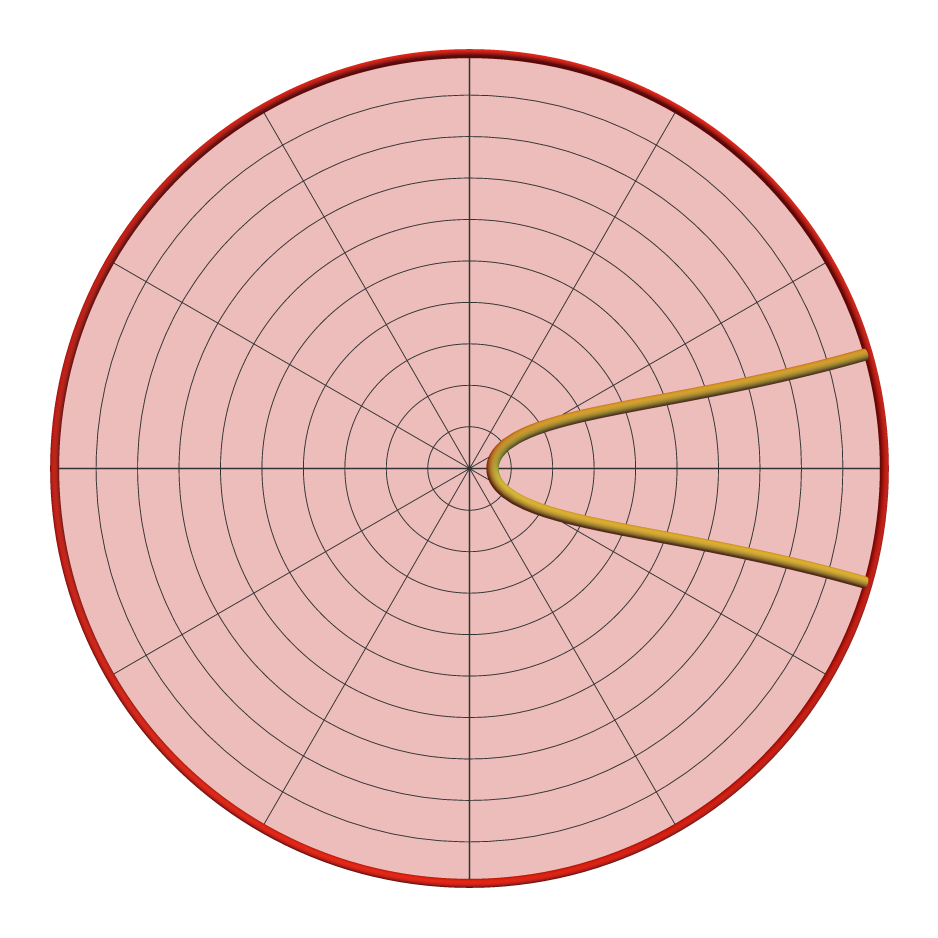}
	\caption{\small{Three $n$-catenaries for $n=1$ in the hyperbolic plane $\mathbb{H}^2$ corresponding to the values $c=0.5$, $c=2$, and $c=3$, respectively. They are represented in the Poincar\'e disc model.}}
	\label{F1}
\end{figure}

\begin{figure}[h!]
	\centering
	\includegraphics[height=5.58cm]{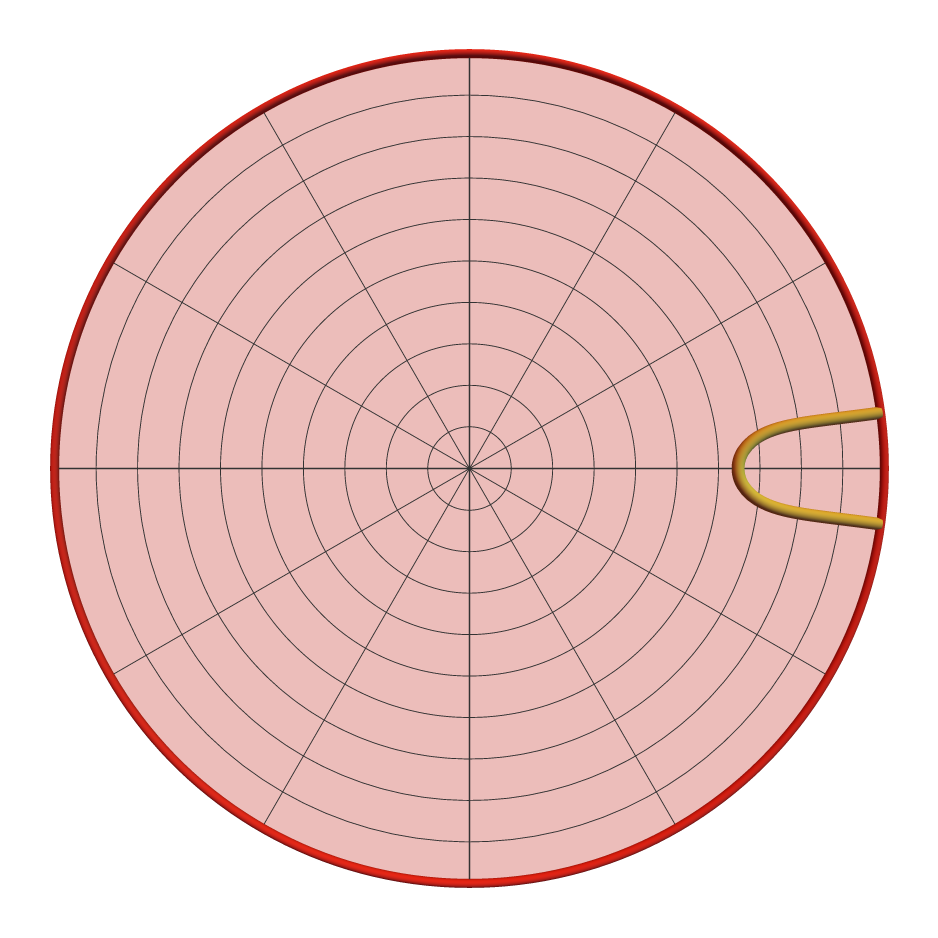}
	\includegraphics[height=5.58cm]{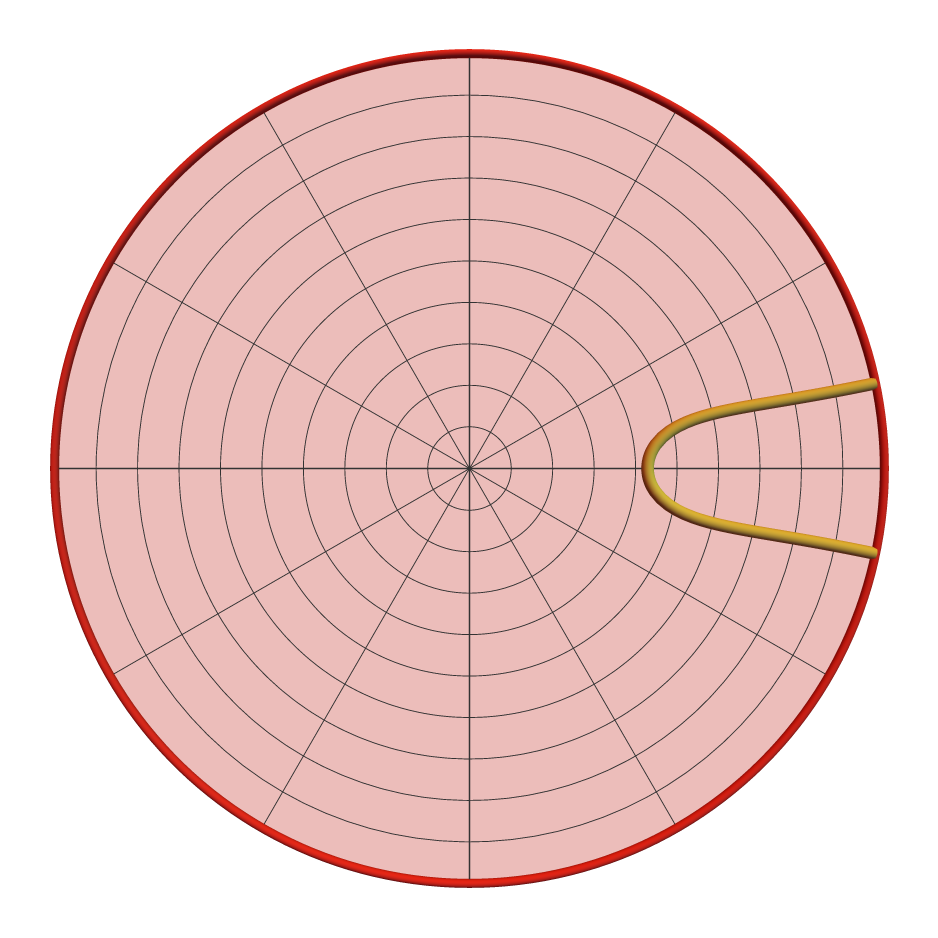}
	\includegraphics[height=5.58cm]{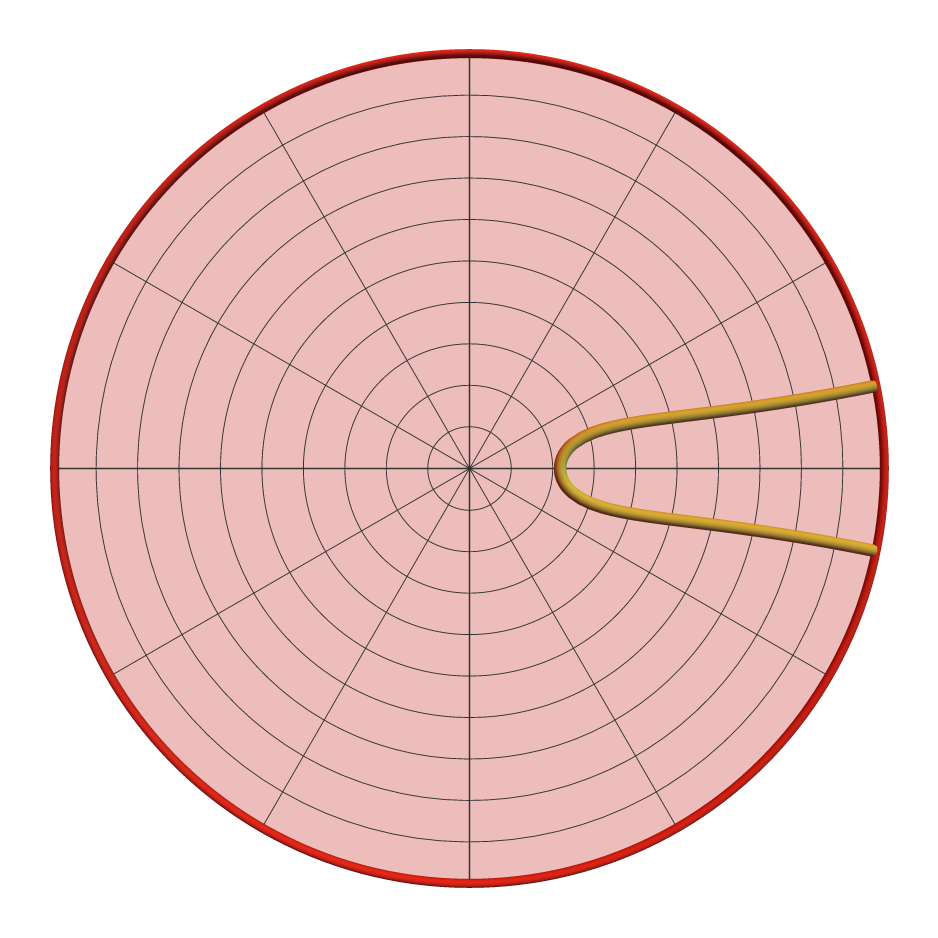}
	\caption{\small{Three $n$-catenaries for $n=2$ in the hyperbolic plane $\mathbb{H}^2$ corresponding to the values $c=1$, $c=2$, and $c=4$, respectively. They are represented in the Poincar\'e disc model.}}
	\label{F2}
\end{figure}

\begin{figure}[h!]
	\centering
	\includegraphics[height=5.58cm]{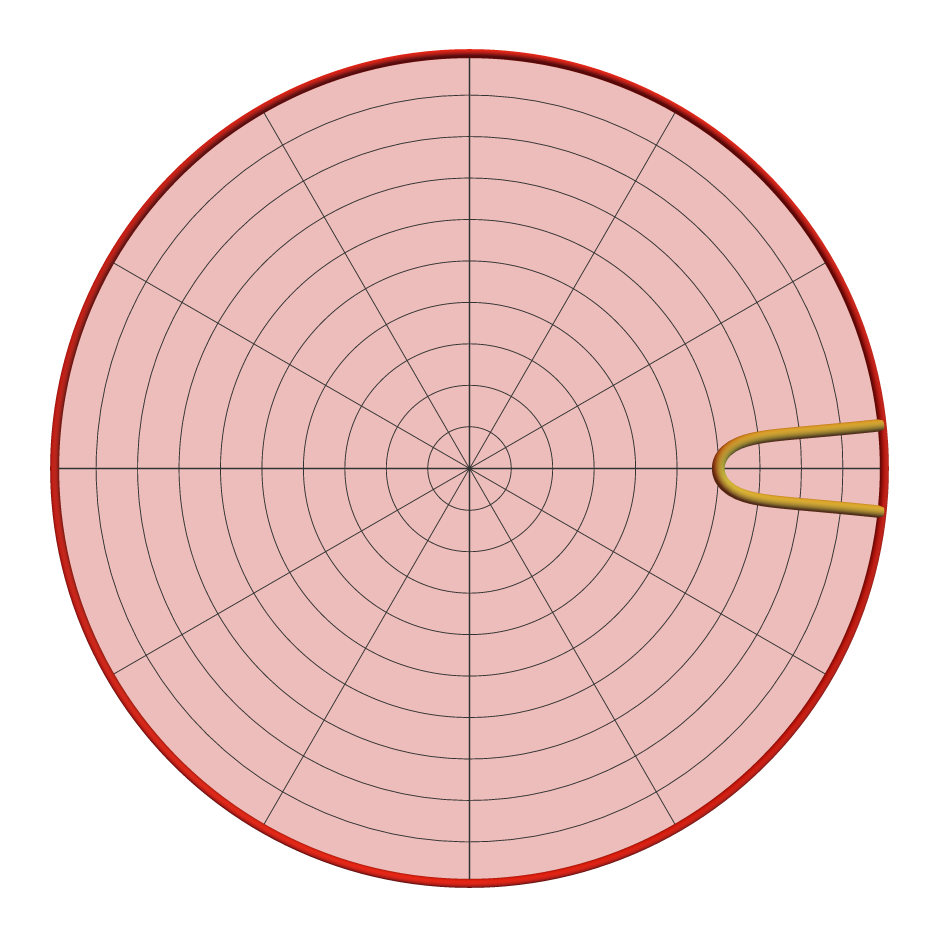}
	\includegraphics[height=5.58cm]{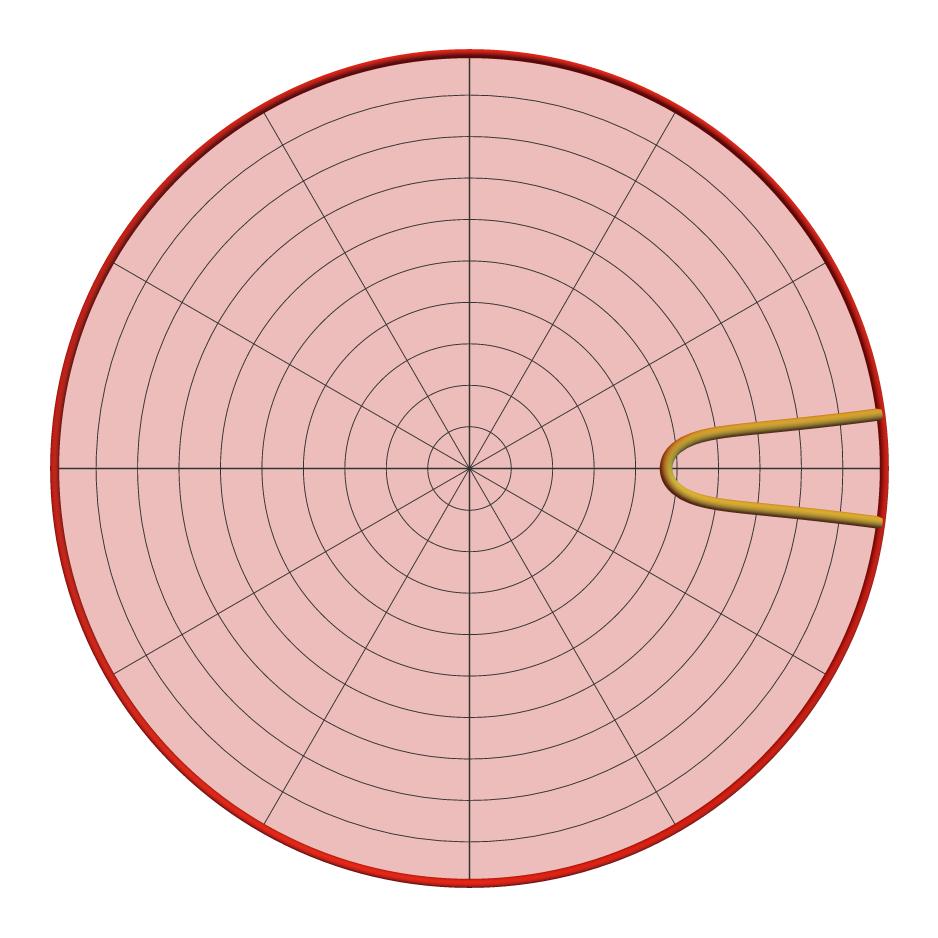}
	\includegraphics[height=5.58cm]{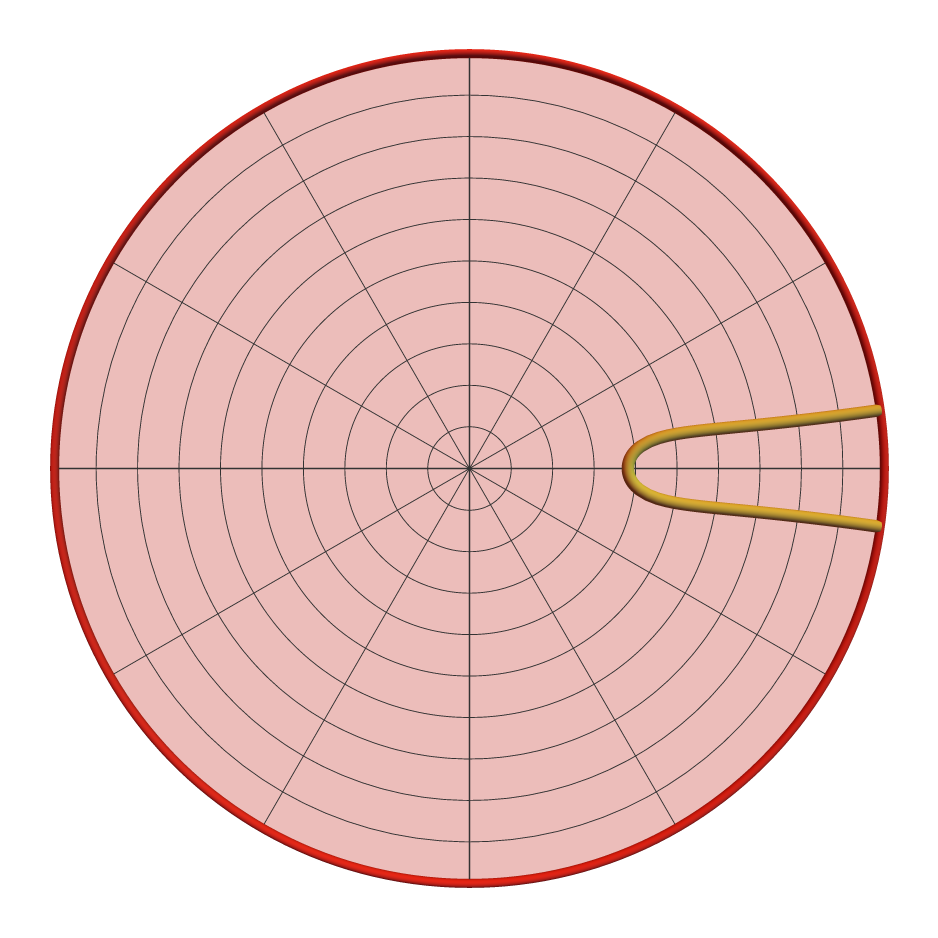}
	\caption{\small{Three $n$-catenaries for $n=3$ in the hyperbolic plane $\mathbb{H}^2$ corresponding to the values $c=2$, $c=3$, and $c=4$, respectively. They are represented in the Poincar\'e disc model.}}
	\label{F3}
\end{figure}

In Figures \ref{F1}-\ref{F3} we illustrate several $n$-catenaries ($n=1$ in Figure \ref{F1}, $n=2$ in Figure \ref{F2}, and $n=3$ in Figure \ref{F3}) for different values of $c>0$. These figures show the geometric properties of $n$-catenaries and highlight their kinematics as the parameter $c>0$ varies. In fact, let $\{\gamma_c\}$, $c>0$, be the family of $n$-catenaries for $n$ fixed. Then, as $c$ increases, the maximum curvature $\alpha^n$ increases monotonically from $2n-1$ to $\infty$ (one can prove this by implicitly differentiating the relation $-\alpha^{2n}+c^2\alpha+(2n-1)^2=0$ and understanding the asymptotic behavior of $\alpha$ when $c\to 0^+$ and $c\to\infty$). At the same time, the (Euclidean) distance $\Phi_n$ decreases monotonically\footnote{Since $\Phi_n'(c)<0$ for every $c\in(0,\infty)$, the function $\Phi_n:(0,\infty)\longmapsto (0,1)$ is bijective. Therefore, the parameter $c>0$ essentially determines this distance.} from $1$ to $0$ (this can be checked by explicitly differentiating $\Phi_n(c)$ and taking the limits as $c\to 0^+$ and $c\to \infty$). Moreover, the (Euclidean) height $\Psi_n$ increases from $0$ until its maximum, and then decreases to $0$. We point out here that, although this property is evident from the illustrations, a formal proof will necessitate the differentiation of ${\rm arctanh}(\Psi_n)$, which is in general a hyperelliptic integral and so computing its derivative becomes challenging. For the case $n=1$, this property is shown in Corollary \ref{cor} below.

We are now in the right position to prove the main result of this section, which extends Theorem 4.3 of \cite{AGP} to higher dimensional cases. The first part of the result of Theorem \ref{t1} was first stated, without proof, in \cite{P}.

\begin{theorem}\label{t1} Let $M$ be a non-totally geodesic rotational hypersurface in $\mathbb{H}^{2n+1}$. Then, $M$ is minimal if and only if its generating curve $\gamma$, locally, satisfies the Euler--Lagrange equation \eqref{ODE} associated to the $p$-energy functional
	$$\mathbf{\Theta}_p(\gamma)=\int_\gamma \kappa^p\,ds\,,$$
for $p=(2n-1)/(2n)\in[1/2,1)$ and $a>0$.

Furthermore, the rotational hypersurface $M$ is the warped product hypersurface $\gamma\times_f \mathbb{S}^{2n-1}$, where the warping function $f$ is given by
$$f=\frac{2n-1}{c\,\kappa^{1/(2n)}}=\frac{2n-1}{c\,\sqrt{u}}\,,$$
for $c=2n\sqrt{a}>0$. (Hence, the metric on $M$ is $ds^2+f^2(s)\overline{g}$, where $\overline{g}$ is the round metric on $\mathbb{S}^{2n-1}$.)
\end{theorem}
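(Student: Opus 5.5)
Both directions go through the principal curvatures \eqref{pcurvatures}. The key observation is that the rotational minimality condition is first order in the profile once it is written in terms of $x_1$. By \eqref{pk}, $M$ is minimal exactly when $\kappa=-(2n-1)\kappa_2$, that is,
$$\kappa=(2n-1)\frac{\sqrt{1+x_1^2-(x_1')^2}}{x_1}$$
for a suitable orientation. My strategy is to find a first integral of the profile ODE that ties $x_1$ to $\kappa$ algebraically. The natural candidate, read off from \eqref{param}, is
$$x_1=\frac{2n-1}{c\sqrt u}\qquad\text{with } u=\kappa^{1/n}.$$
If this relation holds, the warping function statement follows at once. In the hyperquadric model the orbit of $\gamma(s)$ under $O(2n)$ acting on the first $2n$ coordinates is a round sphere of radius $x_1(s)$. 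So the induced metric is $ds^2+x_1^2\overline g$, and $f=x_1=(2n-1)/(c\sqrt u)=(2n-1)/(c\,\kappa^{1/(2n)})$.

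\textbf{Minimal $\Rightarrow$ \eqref{ODE}.} Write the minimality equation as $\kappa_1=-(2n-1)\kappa_2$, using the explicit $\kappa_1=(x_1''-x_1)/\sqrt{1+x_1^2-(x_1')^2}$. Set $W=1+x_1^2-(x_1')^2$. Then $W'=2x_1'(x_1-x_1'')$, so $\kappa_1=-W'/(2x_1'\sqrt W)$, and the equation becomes the separable ODE
$$\frac{W'}{2W}=(2n-1)\frac{x_1'}{x_1}.$$
This integrates to $W=C\,x_1^{2(2n-1)}$ for a constant $C>0$. Hence
$$\kappa=(2n-1)\sqrt C\,x_1^{2n-2}\cdot x_1^{\,?}$$
and I will fix the exponents carefully: $\sqrt W/x_1=\sqrt C\,x_1^{2n-2}$, so $\kappa=(2n-1)\sqrt C\,x_1^{2n-2}$. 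This looks inconsistent with $x_1\propto u^{-1/2}=\kappa^{-1/(2n)}$, so the sign of the exponent must be rechecked. The correct orientation should give $W=C\,x_1^{-2(2n-1)}$, i.e. $\kappa_1$ and $\kappa_2$ carry the sign convention that makes $\kappa$ decrease as $x_1$ grows. That yields $\kappa=(2n-1)\sqrt C\,x_1^{-2n}$, so $x_1=((2n-1)\sqrt C/\kappa)^{1/(2n)}$. Matching $x_1=(2n-1)/(c\sqrt u)$ then fixes $\sqrt C=c^{2n}/(2n-1)^{2n-1}$, which defines $c>0$ and hence $a=c^2/(4n^2)>0$.

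Next, substitute $x_1=(2n-1)/(c\sqrt u)$ into $W=(x_1\kappa/(2n-1))^2$ and solve for $(x_1')^2$:
$$(x_1')^2=1+x_1^2-\frac{x_1^2u^{2n}}{(2n-1)^2}.$$
Since $x_1'=-\frac{2n-1}{2c}u^{-3/2}u'$, this becomes
$$\frac{(2n-1)^2}{4c^2u^3}(u')^2=\frac{c^2u+(2n-1)^2-u^{2n}}{c^2u},$$
i.e. $(u')^2=\frac{4u^2}{(2n-1)^2}\bigl(-u^{2n}+c^2u+(2n-1)^2\bigr)$, which is exactly \eqref{ODE} with $4n^2a=c^2$. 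On open sets where $x_1'=0$ the relation $\kappa\propto x_1^{-2n}$ forces $\kappa$ constant, and this is excluded by non-total-geodesicity together with the equation. This is why the statement says ``locally''.

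\textbf{\eqref{ODE} $\Rightarrow$ minimal.} Run the argument backwards using the explicit arc-length parameterization \eqref{param}, already verified in Proposition \ref{prop} to solve \eqref{ODE}. By uniqueness up to rigid motions, the generating curve is $\gamma_c$ with $x_1=(2n-1)/(c\sqrt u)$. A direct computation with \eqref{pcurvatures} gives $\sqrt{1+x_1^2-(x_1')^2}/x_1=\kappa/(2n-1)$, using \eqref{ODE} to eliminate $u'$ as above. Hence $\kappa_1+(2n-1)\kappa_2=0$ and $M$ is minimal.

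\textbf{Main obstacle.} The delicate point is bookkeeping of orientation and signs in \eqref{pcurvatures}: choosing the normal so that $\kappa>0$ and getting the correct sign of the exponent in the first integral $W\propto x_1^{-2(2n-1)}$. I will check this against \eqref{param} as a consistency test. A secondary obstacle is the rigid-motion ambiguity. A rotational hypersurface's profile is only determined up to motions preserving the rotation axis, and the identification $f=x_1$ requires placing $\gamma$ so that the $O(2n)$-orbits are centered on $x_1=0$, as in \eqref{param}.
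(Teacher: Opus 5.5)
Your argument is correct in substance and takes a genuinely different route from the paper, but three points in the write-up need repair.

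\textbf{The exponent.} This is not an orientation issue. Reversing the normal changes the signs of $\kappa_1$ and $\kappa_2$ together, so $\kappa_1+(2n-1)\kappa_2=0$ does not depend on orientation; your first display is simply a sign error. Since $x_1'(x_1''-x_1)=-W'/2$, multiplying the minimality equation by $x_1'\sqrt W$ gives directly
\[
\frac{W'}{2W}=-(2n-1)\frac{x_1'}{x_1}\,,\qquad\text{so}\qquad W\,x_1^{2(2n-1)}=C>0
\]
on any connected arc where \eqref{pcurvatures} is defined. This never divides by $x_1'$.

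\textbf{The constant.} Matching $x_1^{2n}=(2n-1)\sqrt C/\kappa$ with $x_1=(2n-1)/(c\sqrt u)$ gives $\sqrt C=(2n-1)^{2n-1}/c^{2n}$, the reciprocal of what you wrote. This is harmless, since it only defines $c$.

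\textbf{The sign of $\kappa_1$ in the converse.} You use $\kappa_1=+\kappa$ for the same normal that gives $\kappa_2=-\sqrt W/x_1$, but you do not check it. Your first integral settles this. With $x_1=(2n-1)/(c\sqrt u)$, \eqref{ODE} gives $W=u^{2n-1}/c^2$, so $Wx_1^{2(2n-1)}$ is constant. Differentiating gives $x_1'\bigl((x_1''-x_1)x_1-(2n-1)W\bigr)=0$, i.e.\ $\kappa_1+(2n-1)\kappa_2=0$ wherever $u'\neq0$, hence everywhere by continuity.

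Your caveat that the rigid motion must preserve the rotation axis (momentum aligned with the axis) is right. It is an implicit hypothesis of the converse, and the paper's converse relies on it in the same way.

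\textbf{Comparison with the paper.} The paper proves the forward direction by placing the problem in the theory of curvature energies $\int P(\kappa)\,ds$. It uses $\kappa$ as a local parameter along $\gamma$ and defines $P$ through $x_1=\dot P(\kappa)/\sqrt a$ with $a>0$ prescribed. It then invokes the first integral \eqref{FI} and the normal form \eqref{gammatilde} of critical curves from the Langer--Singer framework. Finally it turns $\kappa_2=-\kappa/(2n-1)$ into the separable equation $(\kappa\dot P-P)/\dot P=-\kappa/(2n-1)$, whose solution is $P\propto\kappa^{(2n-1)/(2n)}$. That route derives the variational structure: it explains why the Lagrangian is $\kappa^p$, and that the rotation axis is the one singled out by the momentum of the critical curve.

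You instead observe that minimality is itself a separable first-order relation between $W$ and $x_1$. Its conserved quantity $Wx_1^{2(2n-1)}$ gives $x_1\propto\kappa^{-1/(2n)}$, which is the content of the paper's $P\propto\kappa^p$ since $x_1\propto\dot P$. Substituting into $(x_1')^2=1+x_1^2-W$ then yields \eqref{ODE}. This is more elementary and self-contained:
\begin{itemize}
\item it uses only \eqref{pcurvatures};
\item it needs neither the local inversion $s\mapsto\kappa$ nor imported facts about critical curves;
\item it produces $a=c^2/(4n^2)>0$ from $C>0$ instead of assuming it;
\item it gives the warping function $f=x_1$ at once.
\end{itemize}
The trade-off is that the link with $\mathbf{\Theta}_p$ comes only from matching with the equation quoted in Section 3, not from a derivation.
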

\textit{Proof.} Let $M$ be a non-totally geodesic minimal rotational hypersurface in $\mathbb{H}^{2n+1}$. Since $M$ is non-totally geodesic and minimal, the curvature of its generating curve $\gamma$ is non-constant. Hence, locally, by the inverse function theorem, we can write the arc length parameter $s$ of $\gamma$ as a function of its curvature $\kappa$. 

Let $\gamma(s)=(x_1(s),0,x_3(s),z(s))$ and define the function $P(\kappa)$ to be a primitive of $\dot{P}(\kappa)$ so that
\begin{equation}\label{x1}
	x_1(s)=x_1(s(\kappa))=\frac{\dot{P}(\kappa)}{\sqrt{a}}\,,
\end{equation}
for some constant $a>0$. (The upper dot represents the derivative with respect to $\kappa$.)

From \eqref{x1}, the curvature $\kappa$ of the arc length parameterized curve $\gamma(s)$ locally coincides with that of
\begin{equation}\label{gammatilde}
	\widetilde{\gamma}(s)=\frac{1}{\sqrt{a}}\left(\dot{P},\sqrt{a+\dot{P}^2}\,\sinh\widetilde{v}(s),\sqrt{a+\dot{P}^2}\,\cosh\widetilde{v}(s)\right),
\end{equation}
for 
$$\widetilde{v}(s)=-\sqrt{a}\int \frac{\kappa\dot{P}-P}{a+\dot{P}^2}\,ds\,.$$
Therefore, by the fundamental theorem of (hyperbolic) curves, both curves are the same (possibly after a suitable rigid motion).

The curve $\widetilde{\gamma}$ given in \eqref{gammatilde} is, up to rigid motions, the only critical curve with non-constant curvature for the general curvature-dependent functional
$$\mathbf{\Theta}(\gamma)=\int_\gamma P(\kappa)\,ds\,,$$
acting on hyperbolic curves, and $a>0$ fixed (see, for instance, \cite{AP} and references therein). In particular, its curvature $\kappa$ satisfies the (first integral of the) associated Euler--Lagrange equation, namely,
\begin{equation}\label{FI}
	\dot{P}_s^2+\left(\kappa\dot{P}-P\right)^2-\dot{P}^2=a\,.
\end{equation}
This equation was computed, for example, in \cite{AP} adapting the computations of \cite{LS} to the general case. It also follows from the condition that $\widetilde{\gamma}$ given in \eqref{gammatilde} is parameterized by the arc length.

It only remains to find the specific function $P(\kappa)$. As explained in Section 2, if $M$ is a rotational minimal hypersurface of $\mathbb{H}^{2n+1}$, then \eqref{pk} holds for the principal curvatures given in \eqref{pcurvatures}. Combining \eqref{pcurvatures} with \eqref{x1} and \eqref{FI}, we get
$$\kappa_i=-\frac{\sqrt{a+\dot{P}^2-\dot{P}_s^2}}{\dot{P}}=-\frac{\lvert \kappa\dot{P}-P\rvert}{\dot{P}}\,,$$
for $i=2,...,2n$, and
$$\kappa_1=\frac{\dot{P}_{ss}-\dot{P}}{\sqrt{a+\dot{P}^2-\dot{P}_s^2}}=\frac{\kappa P-\kappa^2\dot{P}}{\lvert \kappa\dot{P}-P\rvert}=\kappa\,,$$
for our choice of orientation of $\gamma$ (i.e., so that $\kappa\dot{P}-P<0$). The second equality in $\kappa_1$ follows from the identity
$$\dot{P}_{ss}+\dot{P}\left(\kappa^2-1\right)-\kappa P=0\,,$$
obtained differentiating \eqref{FI} with respect to $s$. The relation \eqref{pk} then reads
$$\kappa_2=\frac{\kappa\dot{P}-P}{\dot{P}}=-\frac{\kappa}{2n-1}\,,$$
which gives rise to a first-order ordinary differential equation for $P(\kappa)$ in separable variables. Solving this equation we deduce that
$$P(\kappa)=c\,\kappa^p\,,$$
for $p=(2n-1)/(2n)$. The multiplicative factor $c$ is irrelevant, so we take it to be one. This finishes the forward implication.

For the converse, let $M$ be a non-totally geodesic rotational hypersurface in $\mathbb{H}^{2n+1}$ whose generating curve $\gamma$ satisfies the Euler--Lagrange equation associated to $\mathbf{\Theta}_p$ for $a>0$ and $p=(2n-1)/(2n)$. Then, $\gamma$ is a $n$-catenary which can be parameterized as \eqref{param}, where the normalized curvature $u=\kappa^{1/n}$ is a solution of \eqref{ODE} for $a>0$. It is then a straightforward computation to verify that \eqref{pk} holds and so $M$ is minimal.

Once the above equivalence is established the second statement of the result is a direct consequence of the rotational invariance of the hypersurface $M$ and the explicit parameterization \eqref{param} of the generating curve. \hfill$\square$

\begin{rem} The result of Theorem \ref{t1} also holds for non-totally geodesic rotational hypersurfaces $M$ in $\mathbb{H}^{2n}$. Indeed, regardless of $m$ being even or odd, a non-totally geodesic (spherical) rotational hypersurface $M$ in $\mathbb{H}^m$ is minimal if and only if its generating curve satisfies the Euler--Lagrange equation associated to $\mathbf{\Theta}_p$ for $p=(m-2)/(m-1)$ (and space-like momentum).
\end{rem}

\section{Renormalized Area of Catenoids in $\mathbb{H}^3$}

Let $M_c$, $c\in(0,\infty)$, be a catenoid in $\mathbb{H}^3$. From Theorem \ref{t1} we deduce that $M_c$ is the warped product surface $\gamma_c\times_f\mathbb{S}^1$, where the generating curve $\gamma_c$ is a $1$-catenary (also known as a $1/2$-elastic curve) in $\mathbb{H}^2$, $f=(c\sqrt{u})^{-1}$ and $u=\kappa$ is the curvature of $\gamma_c$ which satisfies the Euler--Lagrange equation \eqref{ODE} for $n=1$. For this particular dimension, the ordinary differential equation \eqref{ODE} can be explicitly solved as (c.f., Corollary 3.3 of \cite{AGP})
\begin{equation}\label{curvature}
	u(s)=\frac{2}{\sqrt{c^4+4}\cosh(2s)-c^2}\,,
\end{equation}
where $s\in\mathbb{R}$ is the arc length parameter of $\gamma_c$ satisfying $u(0)=\alpha=(c^2+\sqrt{c^4+4})/2$.

Let $\theta$ be the parameter of $\mathbb{S}^1$. Then, $M_c\cong \gamma_c\times_f \mathbb{S}^1$ can be parameterized as
$$X_c(s,\theta)=\mathcal{R}(\theta)\cdot\gamma_c(s)\subset \mathbb{H}^3\subset\mathbb{L}^4\,,$$
where $\mathcal{R}(\theta)$ is the $4\times 4$ orthogonal matrix
$$\mathcal{R}(\theta)=\begin{pmatrix} \cos\theta & -\sin\theta & 0 & 0 \\ \sin\theta & \cos\theta & 0 & 0 \\ 0 & 0 & 1 & 0 \\ 0 & 0 & 0 & 1\end{pmatrix},$$
and $\gamma_c(s)$ is the $1$-catenary whose arc-length parameterization is given in \eqref{param}. Expanding, we get
$$X_c(s,\theta)=\frac{1}{c\sqrt{u}}\left(\cos\theta,\sin\theta,\sqrt{1+c^2u}\sinh v(s),\sqrt{1+c^2u}\cosh v(s)\right),$$
where the (hyperbolic) angular progression is given by
$$v(s)=-c\int_0^s \frac{u^{3/2}}{1+c^2u}\,d\sigma\,.$$
Observe that the metric induced on the catenoid $M_c$ from $\mathbb{L}^4$ is, precisely, $ds^2+(c^2 u)^{-1}d\theta^2$, which coincides with that of the warped product surface $\gamma_c\times_f\mathbb{S}^1$.

In Figure \ref{FC}, we use the above parameterization and the identification \eqref{ident} of Section 2, to show three catenoids $M_c$ in the Poincar\'e ball model for $\mathbb{H}^3$.

\begin{figure}[h!]
	\centering
	\includegraphics[height=5.5cm]{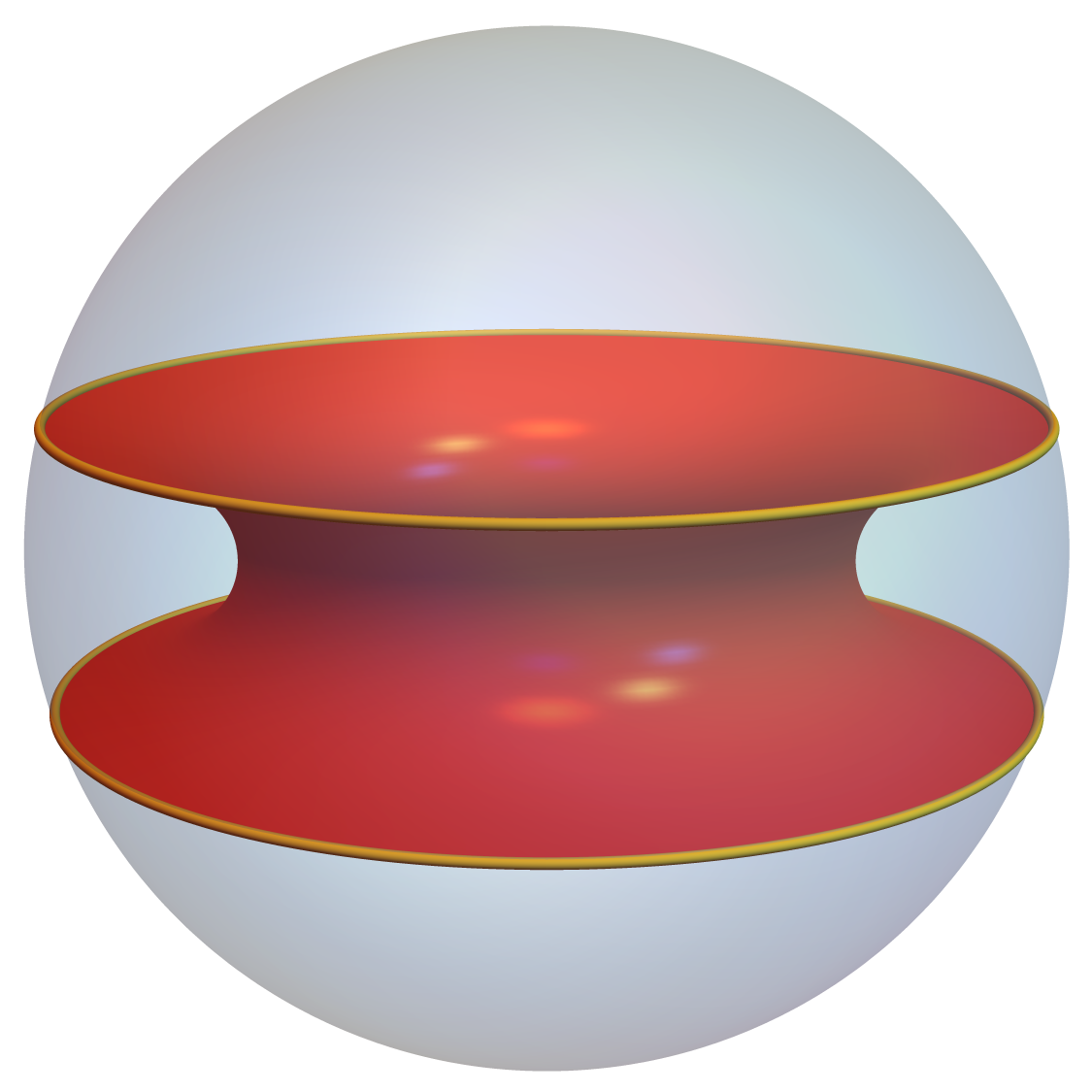}\,
	\includegraphics[height=5.5cm]{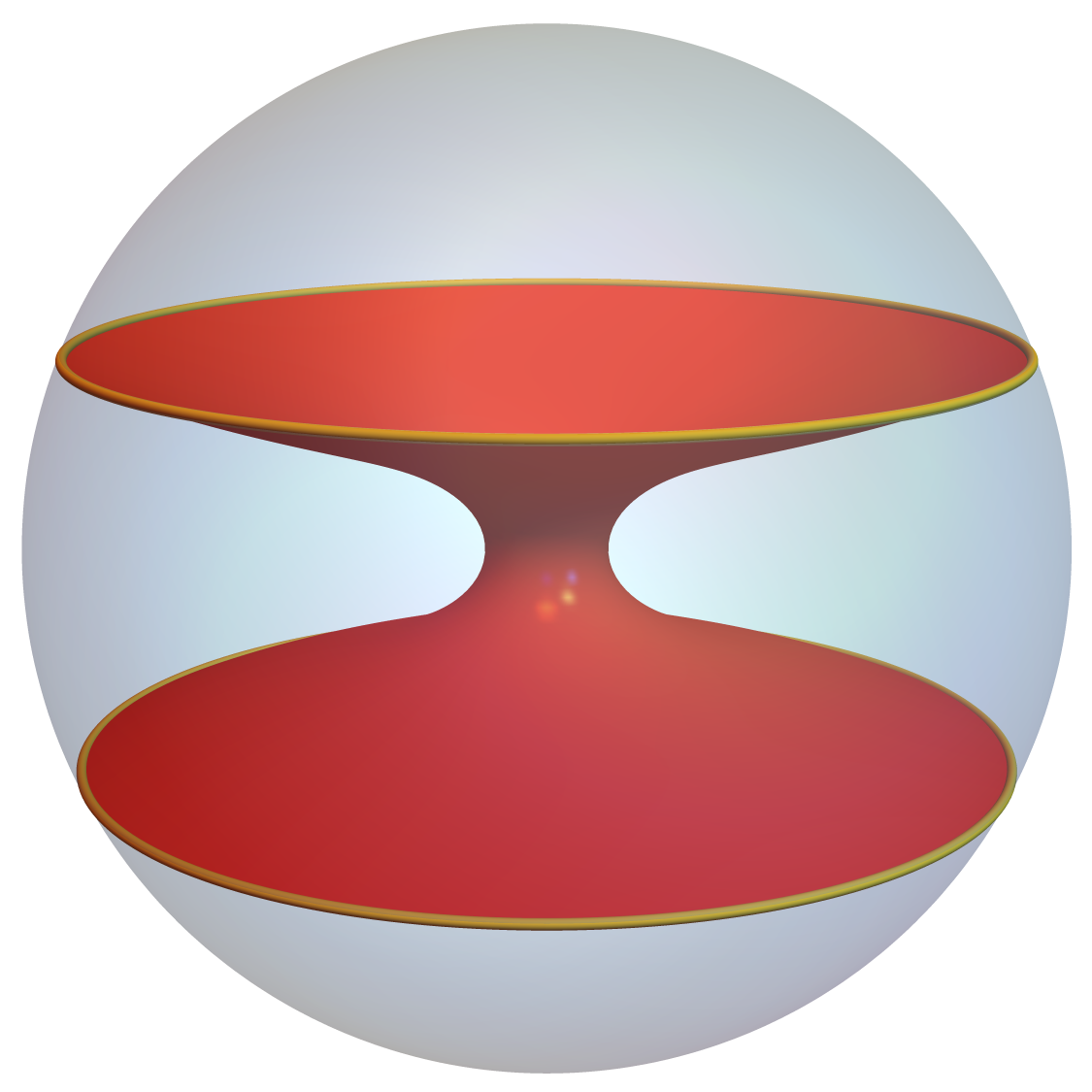}\,
	\includegraphics[height=5.5cm]{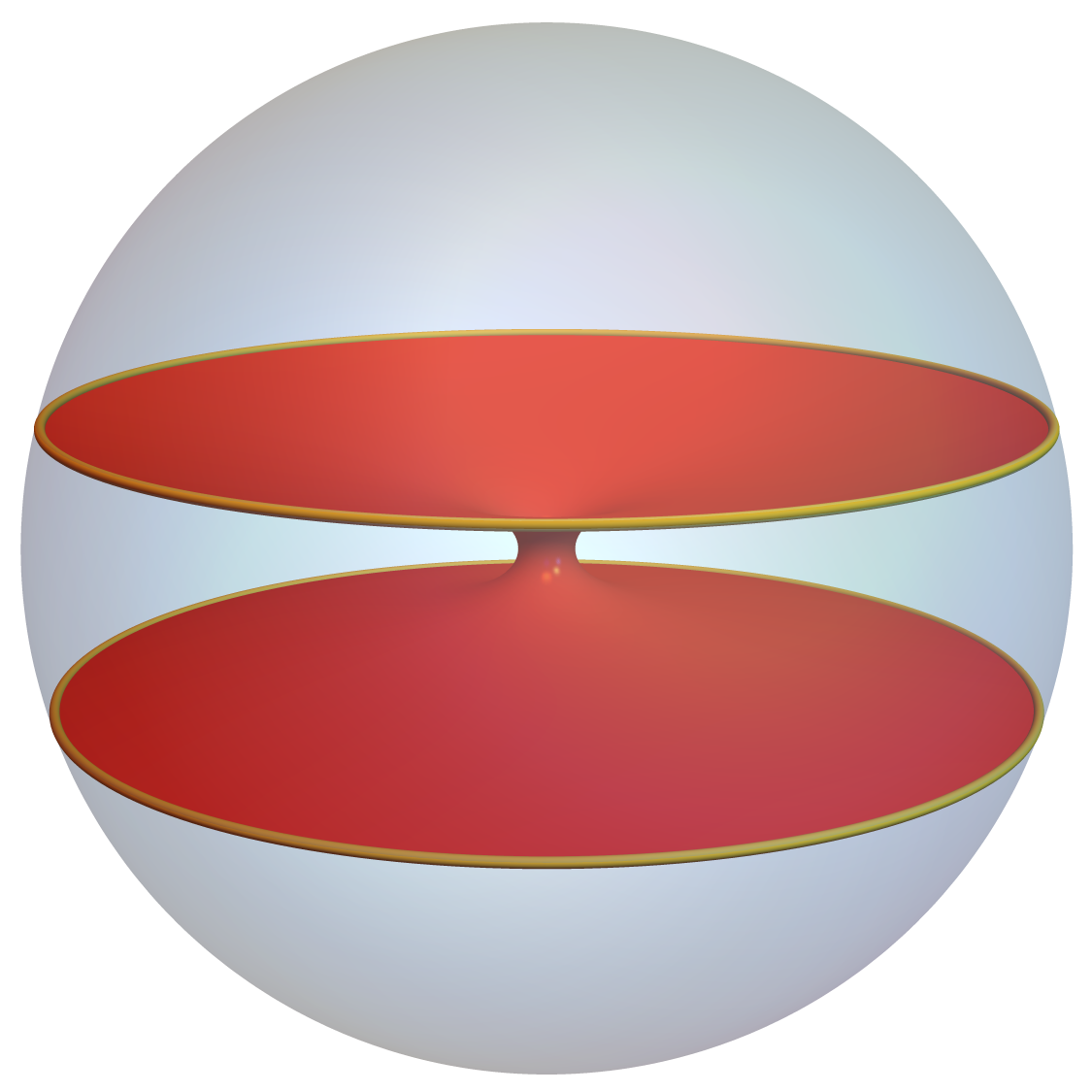}
	\caption{\small{Three catenoids $M_c$ in the Poincar\'e ball model for $\mathbb{H}^3$ for different values of the parameter $c>0$. Their corresponding generating curves are the $1$-catenaries shown in Figure \ref{F1}, with parameters $c=0.5$, $c=2$, and $c=3$, respectively.}}
	\label{FC}
\end{figure}

We will next compute the renormalized area $\mathcal{A}_R$ of the catenoid $M_c$, $c>0$, employing \eqref{formulak} and the variational characterization of Theorem \ref{t1}. The renormalized area of $M_c$ will be expressed as a linear combination of standard complete elliptic integrals of the first and second kind, denoted by $K$ and $E$, respectively. Thus, we first briefly recall here that the complete elliptic integrals of the first and second kind are defined, respectively, by
\begin{equation*}
	K(\zeta)=\int_0^1 \frac{dx}{\sqrt{(1-x^2)(1-\zeta^2x^2)}}\,,\quad\quad\quad
	E(\zeta)=\int_0^1 \frac{\sqrt{1-\zeta^2x^2}}{\sqrt{1-x^2}}\,dx\,.
\end{equation*}
Note that $K$ is a monotonically increasing function from $\pi/2$ as $\zeta\to 0^+$ to $\infty$ when $\zeta\to 1^-$; while $E$ monotonically decreases from $\pi/2$ as $\zeta\to 0^+$ to $1$ as $\zeta\to 1^-$.

The next result gives an explicit expression of the renormalized area $\mathcal{A}_R$ of catenoids $M_c$, $c\in(0,\infty)$, in $\mathbb{H}^3$.

\begin{theorem}\label{t2} Let $M_c$, $c\in(0,\infty)$, be a non-totally geodesic rotational minimal surface in $\mathbb{H}^3$. The renormalized area of $M_c$ can be expressed in terms of standard complete elliptic integrals as
	\begin{equation}\label{renA}
		\mathcal{A}_R(M_c)=\frac{4\pi}{\sqrt{2\zeta^2-1}}\left(\left(1-\zeta^2\right)K(\zeta)-E(\zeta)\right),
	\end{equation}
where $\zeta\in(\sqrt{2}/2,1)$ is given in terms of the parameter $c>0$ by
$$\zeta=\sqrt{\frac{c^2+\sqrt{c^4+4}}{2\sqrt{c^4+4}}}\,.$$

Moreover, the renormalized area $\mathcal{A}_R(M_c)$ has a unique critical point (a maximum) at the value $\zeta_+\in(\sqrt{2}/2,1)$ such that $K(\zeta_+)=2E(\zeta_+)$. Hence, $\mathcal{A}_R(M_c)$ increases from $-\infty$, when $c\to 0^+$, to its maximum 
$$\mathcal{A}_{R}^+=-4\pi\sqrt{2\zeta_+^2-1}\,E(\zeta_+)\in\left(-4\pi,-\pi^2\right),$$ 
and then decreases approaching $-4\pi$, as $c\to\infty$.
\end{theorem}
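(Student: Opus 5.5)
The plan is to apply the area formula \eqref{formulak} with $n=1$ to the truncations $M_\epsilon$, and then to handle each of the two resulting integrals separately. For $n=1$, the quantity $\lambda$ is the Gauss curvature of $M_c$ and $\binom{1}{1}(2n-1)^{-2}=1$, so
$$\mathcal{A}(M_\epsilon)=-\int_{M_\epsilon}\lambda\,dA-\int_{M_\epsilon}\kappa^2\,dA\,.$$

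For the first integral I will use Gauss--Bonnet on the annulus $M_\epsilon$, whose Euler characteristic is $0$. This converts $\int_{M_\epsilon}\lambda\,dA$ into minus the total geodesic curvature of the two boundary circles $\partial M_\epsilon$. Since $M_c$ meets the ideal boundary orthogonally (Proposition \ref{prop}(ii)), I expect this boundary term to expand as a multiple of the boundary length, which is $O(1/\epsilon)$, plus $o(1)$. It should therefore contribute no constant term, which is consistent with the known formula $\mathcal{A}_R=-2\pi\chi-\tfrac12\int|B|^2$ of \cite{AM}. With $|B|^2=2\kappa^2$ from \eqref{B2}, this yields
$$\mathcal{A}_R(M_c)=-\int_{M_c}\kappa^2\,dA\,,$$
which is finite. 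Checking that the boundary expansion really has vanishing constant term is the first delicate point. I would verify it directly from the parameterization $X_c$, using $z=\sqrt{1+c^2u}\cosh v/(c\sqrt u)$ near $u\to0$. Alternatively, I would simply invoke \cite{AM}.

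For the second integral I will use Theorem \ref{t1}, which gives $dA=f\,ds\,d\theta$ with $f=1/(c\sqrt u)$ and $u=\kappa$. Together with the symmetry of Proposition \ref{prop}(iii), this gives
$$\int_{M_c}\kappa^2\,dA=\frac{4\pi}{c}\int_0^\infty u^{3/2}\,ds\,.$$
On $s>0$ we have $u'=-2u\sqrt{Q_{1,c}(u)}$ by \eqref{ODE}. Changing variables from $s$ to $u$ then gives
$$\mathcal{A}_R(M_c)=-\frac{2\pi}{c}\int_0^\alpha\frac{\sqrt u\,du}{\sqrt{-u^2+c^2u+1}}\,.$$
Next I factor $-u^2+c^2u+1=(\alpha-u)(u+\beta)$ with $\beta=1/\alpha$, so that $\alpha+\beta=\sqrt{c^4+4}$, and substitute $u=\alpha(1-x^2)$. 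The integral should then reduce to a standard form with modulus $\zeta^2=\alpha/(\alpha+\beta)$, which indeed equals the stated $\zeta^2$. Using $\int_0^1 x^2/\sqrt{(1-x^2)(1-\zeta^2x^2)}\,dx=(K-E)/\zeta^2$ and the relations $c^2=\alpha-\beta$ and $2\zeta^2-1=c^2/\sqrt{c^4+4}$ to eliminate $c$, I expect to arrive at \eqref{renA}. Reconciling the prefactors is routine but error-prone.

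For the second part I will differentiate \eqref{renA} in $\zeta$ using $dK/d\zeta=(E-(1-\zeta^2)K)/(\zeta(1-\zeta^2))$ and $dE/d\zeta=(E-K)/\zeta$. I expect the derivative to factor as a positive quantity times $(K-2E)$. Since $K-2E$ is strictly increasing in $\zeta$, it vanishes at exactly one point $\zeta_+$; at $\zeta=\sqrt2/2$ I still need to check the sign of $K-2E$. Substituting $K(\zeta_+)=2E(\zeta_+)$ into \eqref{renA} gives $\mathcal{A}_R^+=-4\pi\sqrt{2\zeta_+^2-1}\,E(\zeta_+)$. The map $c\mapsto\zeta$ is increasing, with $\zeta\to\sqrt2/2$ as $c\to0^+$ and $\zeta\to1$ as $c\to\infty$. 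As $c\to0^+$, the factor $1/\sqrt{2\zeta^2-1}$ blows up while the bracket tends to $\tfrac12K(\tfrac{\sqrt2}{2})-E(\tfrac{\sqrt2}{2})<0$, so $\mathcal{A}_R(M_c)\to-\infty$. As $c\to\infty$, $(1-\zeta^2)K\to0$ and $E\to1$, so $\mathcal{A}_R(M_c)\to-4\pi$.

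The bounds $\mathcal{A}_R^+\in(-4\pi,-\pi^2)$ are the main obstacle. They require estimating $\sqrt{2\zeta_+^2-1}\,E(\zeta_+)$ between $\pi/4$ and $1$. I would first try to locate $\zeta_+$ in a small interval via the monotonicity of $K-2E$ and then apply the monotonicity of $E$ on that interval. Failing that, I would use the known limits together with the fact that the maximum exceeds the limit value $-4\pi$.
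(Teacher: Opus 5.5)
Your derivation of \eqref{renA} follows the paper's route: Proposition~\ref{prop2} with $n=1$, Gauss--Bonnet on the annulus, the boundary term from \cite{AM}, and the change of variables via \eqref{ODE}. The only difference is that you do the elliptic reduction by hand with $u=\alpha(1-x^2)$ instead of quoting tables. It checks out: $\int_0^\alpha \sqrt{u}\,du/\sqrt{Q_{1,c}(u)}=2\sqrt{\alpha+\beta}\,\bigl(E-(1-\zeta^2)K\bigr)$ and $\sqrt{\alpha+\beta}/c=1/\sqrt{2\zeta^2-1}$, which gives exactly \eqref{renA}.

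Two corrections are needed in the monotonicity part. First, the factor in front of $K-2E$ is negative, not positive. One has $\frac{d}{d\zeta}\bigl((1-\zeta^2)K-E\bigr)=-\zeta K$, and hence
$$\frac{d}{d\zeta}\mathcal{A}_R(M_c)=-\frac{4\pi\zeta}{(2\zeta^2-1)^{3/2}}\bigl(K(\zeta)-2E(\zeta)\bigr).$$
With your sign, $\zeta_+$ would be a minimum, which is incompatible with the limits $-\infty$ and $-4\pi$. Second, you need $K-2E<0$ at $\sqrt{2}/2$, and you also use it without proof for the limit $c\to 0^+$. It follows from Legendre's relation at the self-complementary modulus, $2EK-K^2=\pi/2$, i.e.\ $\frac12K-E=-\pi/(4K)<0$. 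This is exactly what the paper does.

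The real difference from the paper is the bound $\mathcal{A}_R^+<-\pi^2$. Your fallback (the maximum exceeds the limit value) only gives $\mathcal{A}_R^+>-4\pi$. That is indeed how the paper gets the lower bound, via strict decrease on $(\zeta_+,1)$. For the upper bound the paper estimates no elliptic integrals at all. It doubles the catenoid across the ideal boundary to obtain a torus with Willmore energy $-2\mathcal{A}_R^+$ (Proposition~8.1 of \cite{AM}) and invokes Marques--Neves \cite{MN}.

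Your numerical route also works, and it is lighter than you fear. Since $E>1$ on $(\sqrt2/2,1)$ and $\sqrt{2\zeta^2-1}$ is increasing, it suffices to show $\zeta_+\geq\zeta_0:=\sqrt{\tfrac12+\tfrac{\pi^2}{32}}$, because then $\sqrt{2\zeta_+^2-1}\,E(\zeta_+)>\sqrt{2\zeta_0^2-1}=\pi/4$. By strict monotonicity of $K-2E$, this reduces to a single certified check that $K(\zeta_0)<2E(\zeta_0)$. Numerically $K-2E\approx-0.07$ there, and the check can be certified, e.g., via the AGM with explicit error bounds.

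The two routes buy different things. Yours is self-contained and gives the strict inequality outright, but it rests on a rigorous numerical evaluation. The paper's is conceptual and explains where $-\pi^2$ comes from. It also bounds the renormalized area of every minimal annulus in $\mathbb{H}^3$, not only catenoids. The cost is invoking the Willmore conjecture, and strictness there relies on its equality case.
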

\textit{Proof.} Let $M_c$, $c\in(0,\infty)$, be a catenoid in $\mathbb{H}^3$ and denote by $M_\epsilon=M_c\cap\{z\leq 1/\epsilon\}$, for $\epsilon>0$ sufficiently small, the truncated catenoid. Then, $M_\epsilon$ is compact and embedded in $\mathbb{H}^3$. Applying Proposition \ref{prop2} for $n=1$ to $M_\epsilon$, we get
$$\mathcal{A}(M_\epsilon)=-\int_{M_\epsilon}\lambda\,dA-\int_{M_\epsilon} \kappa^2\,dA\,.$$
In addition, since for surfaces ($n=1$), $\lambda=K$ is just the Gaussian curvature, we can employ the classical Gauss--Bonnet theorem
$$\int_{M_\epsilon} K\,dA=2\pi\chi(M_\epsilon)-\oint_{\partial M_\epsilon}\kappa_g\,ds\,,$$
in the first integral above, obtaining
$$\mathcal{A}(M_\epsilon)=-2\pi\chi(M_\epsilon)+\oint_{\partial M_\epsilon} \kappa_g\,ds-\int_{M_\epsilon}\kappa^2\,dA=\oint_{\partial M_\epsilon}\kappa_g\,ds-\int_{M_\epsilon}\kappa^2\,dA\,.$$ 
Observe that the second equality follows from $\chi(M_\epsilon)=0$, since $M_\epsilon$ is a topological annulus. Moreover, from (3.11) of \cite{AM}, which shows that the total geodesic curvature has no constant term when considering its expansion in terms of $\epsilon>0$, we deduce that
\begin{equation}\label{test1}
	\mathcal{A}_R(M_c)=-\int_{M_c}\kappa^2\,dA=-\frac{2\pi}{c}\int_{\gamma_c} u^{3/2}\,ds=-\frac{2\pi}{c}\int_0^\alpha \frac{u}{\sqrt{u(-u^2+c^2u+1)}}\,du\,.
\end{equation}
The second equality above follows from $u=\kappa$ and $dA=(c^2u)^{-1/2}d\theta ds$, which is a consequence of the induced metric on $M_c\cong \gamma_c\times_f \mathbb{S}^1$. On the other hand, the last equality holds after using the Euler--Lagrange equation \eqref{ODE} to make a change of variable. Recall that the curve $\gamma_c$ is a $1$-catenary and so \eqref{ODE} must be satisfied for $n=1$. Furthermore, $\gamma_c$ is symmetric with respect to $\{x_3=0\}$ (Part (iii) of Proposition \ref{prop}) with maximum curvature $\alpha$. In this particular case, it can be explicitly computed as $\alpha=(c^2+\sqrt{c^4+4})/2$.

Finally, the expression of the renormalized area $\mathcal{A}_R(M_c)$ in terms of $K(\zeta)$ and $E(\zeta)$ follows from well known relations between elliptic integrals (see, for instance, formula 3.132-5 of \cite{GR} or 235.00 of \cite{BF}).

We will next understand the right-hand side of \eqref{renA}. We begin computing the asymptotic behavior. From the Legendre's relation for elliptic integrals at $\zeta=\sqrt{2}/2$, we deduce that
$$\frac{1}{2}K\left(\sqrt{2}/2\right)-E\left(\sqrt{2}/2\right)=\frac{-\pi}{4K\left(\sqrt{2}/2\right)}\,,$$
which combined with $K(\zeta)$ being strictly increasing from $1$ as $\zeta\to 0^+$, shows that the parenthesis on the right-hand side of \eqref{renA} is finite and negative. Hence,
$$\lim_{\zeta\to\sqrt{2}/2^+}\mathcal{A}_R(M_c)=-\infty\,.$$
For the limit as $\zeta\to 1^-$, we recall that $K(\zeta)\simeq \log(4)-\log(1-\zeta)/2$. Therefore, $(1-\zeta^2)K(\zeta)\to 0$ as $\zeta\to 1^-$. Together with $E(1)=1$, we then get
$$\lim_{\zeta\to 1^-}\mathcal{A}_R(M_c)=-4\pi\,.$$
Now, we will compute the derivative. Observe first that as $c>0$ increases, the parameter $\zeta$ increases monotonically from $\sqrt{2}/2$ to $1$. Hence, it is sufficient to compute the derivative of \eqref{renA} in terms of $\zeta$. Using the well known derivatives of $K(\zeta)$ and $E(\zeta)$ (see, for instance, formulas 8.123-2 and 8.123-4 of \cite{GR}) we get after straightforward simplifications
$$\frac{d}{d\zeta}\mathcal{A}_R(M_c)=\frac{-4\pi\zeta}{(2\zeta^2-1)^{3/2}}\left(K(\zeta)-2E(\zeta)\right).$$
As $\zeta\to\sqrt{2}/2^+$, we can use once again Legendre's identity to see that the above derivative approaches $+\infty$; while as $\zeta\to 1^-$ we can use once again the asymptotic behavior $K(\zeta)\simeq \log(4)-\log(1-\zeta)/2$ to check that it approaches $-\infty$. This combined with $K$ being increasing and $E$ being decreasing shows that $\mathcal{A}_R(M_c)$ has only one critical point and, hence, $\mathcal{A}_R(M_c)$ increases until it attains the maximum at the value $\zeta_+\in(\sqrt{2}/2,1)$ such that $K(\zeta_+)=2E(\zeta_+)$. Then, $\mathcal{A}_R(M_c)$ decreases approaching $-4\pi$ from above. It only remains to prove that the maximum of $\mathcal{A}_R(M_c)$, namely $\mathcal{A}_R^+$, is smaller than $-\pi^2$. Assume to the contrary that $\mathcal{A}_R^+\geq -\pi^2$. One can then double the catenoid $M_c$ obtaining a torus. From Proposition 8.1 of \cite{AM}, this torus has Willmore energy $-2\mathcal{A}_R^+\leq 2\pi^2$, which contradicts the resolution of the Willmore conjecture \cite{MN}. This finishes the proof. \hfill$\square$

\begin{rem}\label{rem} The renormalized area $\mathcal{A}_R$ of catenoids $M_c$, $c\in(0,\infty)$, in $\mathbb{H}^3$ can also be computed from the second integral in \eqref{test1} and the explicit expression of the curvature of $1$-catenaries \eqref{curvature}. In fact, we get after the change of variable $x=2s$ that
	$$\mathcal{A}_R(M_c)=-\frac{2\pi}{c}\int_{\gamma_c} u^{3/2}\,ds=-\frac{4\sqrt{2}\,\pi}{c}\int_0^\infty \frac{dx}{\left(\sqrt{c^4+4}\,\cosh x-c^2\right)^{3/2}}\,.$$
The improper integral above can be explicitly described in terms of standard complete elliptic integrals of the first and second kind (for details, see for instance, 2.464-43 of \cite{GR}). After some straightforward manipulations we recover \eqref{renA}, as expected.
\end{rem}

We highlight here that the renormalized area of minimal surfaces in $\mathbb{H}^3$ is always negative. From Proposition 8.1 of \cite{AM}, the renormalized area of minimal surfaces in $\mathbb{H}^3$ coincides, up to a coefficient $-2$, with the Willmore energy of its double regarded as a surface in $\mathbb{R}^3$ and, hence, it must be negative (see also the expressions of $\mathcal{A}_R(M_c)$ given in \eqref{test1} and in Remark \ref{rem}). However, the result of Theorem \ref{t2} shows that the renormalized area of minimal surfaces is not bounded below. Indeed, the behavior of \eqref{renA} proves that for every value $m$ in $(-\infty,-4\pi]$, there exists a unique catenoid $M_c$ with $\mathcal{A}_R(M_c)=m$, while for every $m\in(-4\pi,\mathcal{A}_R^+)$, there are two different catenoids with the same renormalized area $m$. 

\begin{figure}[h!]
	\begin{center}
		\includegraphics[height=4cm,width=7.5cm]{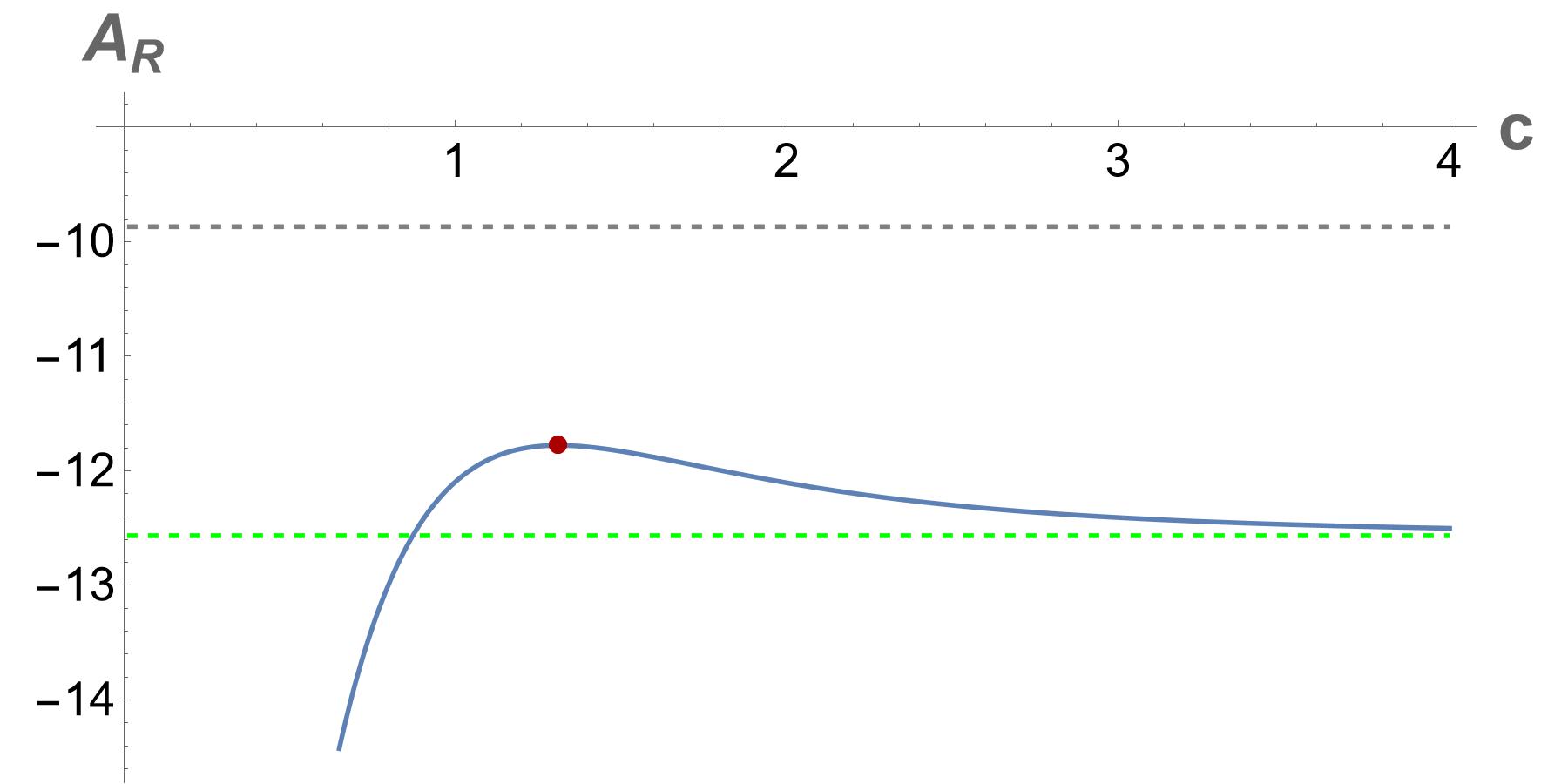}
	\end{center} 
	\caption{\small{Graph of the renormalized area $\mathcal{A}_R$ of the catenoids $M_c$ in $\mathbb{H}^3$ as a function of the parameter $c\in(0,\infty)$. The green dashed line represents twice the renormalized area of $\mathbb{H}^2$, i.e., $-4\pi\simeq -12.5664$. The gray dashed line is the upper bound $-\pi^2\simeq -9.8696$ for the renormalized area of minimal annuli. The red dot is the maximum of $\mathcal{A}_R(M_c)$ which is, approximately, $-11.7778$ and is attained at $c_+\simeq 1.3118$.}}
	\label{G2}
\end{figure}

Furthermore, note that the value $-4\pi$ arising as the limit of \eqref{renA} is not coincidental, it is in fact twice the renormalized area of the hyperbolic plane $\mathbb{H}^2$. The fact that the limit of $\mathcal{A}_R(M_c)$ as $c\to\infty$ is precisely $-4\pi$ is expectable due to the geometric behavior of $1$-catenaries as $c\to\infty$ (explained in the previous section). The catenoids $M_c$ approach a double cover of the equator in the ball model for $\mathbb{H}^3$, i.e., twice the hyperbolic plane $\mathbb{H}^2$. In Figure \ref{G2}, we illustrate the graph of the renormalized area $\mathcal{A}_R$ of the catenoids $M_c$, $c\in(0,\infty)$.

Finally, we also observe that the unique catenoid attaining the maximum of the renormalized area $\mathcal{A}_R(M_c)$ has a geometric particularity, which we show in the following corollary.

\begin{corollary}\label{cor} The maximum value of the renormalized area of the family of catenoids $\{M_c\}$, $c>0$, is attained at the catenoid with maximum height $\Psi_1$.
\end{corollary}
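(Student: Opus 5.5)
The plan is to express the height $\Psi_1$ for $n=1$ through complete elliptic integrals in the same variable $\zeta$ used in Theorem \ref{t2}, and then show that its derivative in $\zeta$ vanishes exactly when $K(\zeta)=2E(\zeta)$. Since $\tanh$ is strictly increasing, it suffices to study
$$I(c)={\rm arctanh}\,\Psi_1(c)=\frac{c}{2}\int_0^\alpha \frac{u}{(c^2u+1)\sqrt{u(-u^2+c^2u+1)}}\,du\,,$$
taken from Part (v) of Proposition \ref{prop} with $n=1$. Here $\alpha=(c^2+\sqrt{c^4+4})/2$, and the remaining roots of $u(-u^2+c^2u+1)$ are $0$ and $\beta=(c^2-\sqrt{c^4+4})/2<0$.

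\textbf{Step 1: reduce $I$ to elliptic integrals.} I would make the standard substitution adapted to the roots $\beta<0<\alpha$, for instance $u=\alpha(1-x^2)$ or the one behind formula 3.132 of \cite{GR}. This should produce modulus $\zeta^2=\alpha/(\alpha-\beta)=(c^2+\sqrt{c^4+4})/(2\sqrt{c^4+4})$, the same $\zeta$ as in Theorem \ref{t2}. The extra factor $(c^2u+1)^{-1}$ would then bring in a complete elliptic integral of the third kind $\Pi(\nu,\zeta)$.

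To get rid of $\Pi$, I would use the simple relation $\alpha\beta=-1$. This should turn the characteristic $\nu$ into an expression in $\zeta$ alone, of the special type for which $\Pi$ can be written in terms of $K$ and $E$. If that fails, I would instead keep $\Pi$ and differentiate it directly using its standard derivative formulas.

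I also need $c$ as a function of $\zeta$. From $\zeta^2=\tfrac12+\tfrac{c^2}{2\sqrt{c^4+4}}$ one gets $c^2=2(2\zeta^2-1)/\sqrt{1-(2\zeta^2-1)^2}$, which is a smooth increasing function of $\zeta$.

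\textbf{Step 2: differentiate.} An alternative to Step 1 is to differentiate $I(c)$ under the integral sign. The endpoint singularity at $\alpha$ is only of order $(\alpha-u)^{-1/2}$, so this should be done after the substitution $u=\alpha(1-x^2)$, which fixes the domain as $x\in[0,1]$. With the derivatives of $K$ and $E$ (formulas 8.123 of \cite{GR}), I expect to reach
$$\frac{dI}{d\zeta}=h(\zeta)\bigl(K(\zeta)-2E(\zeta)\bigr)$$
for some factor $h$ of constant sign. The target is guided by Theorem \ref{t2}, whose derivative is proportional to $K-2E$.

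\textbf{Step 3: conclude.} As shown in the proof of Theorem \ref{t2}, $K-2E$ changes sign exactly once on $(\sqrt2/2,1)$, at $\zeta_+$, going from negative to positive. Together with the limits $\Psi_1\to0$ at both ends, this shows that $\Psi_1$ has a unique maximum at $\zeta_+$, which is where $\mathcal{A}_R(M_c)$ is maximal.

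The main obstacle is Step 1/2: handling the third-kind term and checking that, after simplification, the derivative factors exactly through $K-2E$ with a sign-definite prefactor. If the $\Pi$ term does not reduce cleanly, my fallback is to relate $I$ to $\mathcal{A}_R$ directly. Writing $\frac{u}{c^2u+1}=\frac{1}{c^2}\bigl(1-\frac{1}{c^2u+1}\bigr)$ and differentiating in $c$, I would look for an identity of the form $\frac{dI}{dc}=g(c)\,\frac{d\mathcal{A}_R}{dc}$ with $g$ positive, verified by comparing both sides as combinations of $K$ and $E$.
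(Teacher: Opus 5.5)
Your plan is essentially the paper's proof. The paper writes ${\rm arctanh}\,\Psi_1=\frac{\sqrt{1-\zeta^2}}{\zeta\sqrt{2\zeta^2-1}}\bigl(\zeta^2K(\zeta)-(1-\zeta^2)\Pi\bigl(\tfrac{2\zeta^2-1}{\zeta^2},\zeta\bigr)\bigr)$ and differentiates with the standard formulas for $K$ and $\Pi$ to obtain $-\frac{K(\zeta)-2E(\zeta)}{\sqrt{1-\zeta^2}\sqrt{2\zeta^2-1}}$. So your prefactor $h$ is negative and the unique critical point at $\zeta_+$ is a maximum. Your first hope of eliminating $\Pi$ will not work: the characteristic $(2\zeta^2-1)/\zeta^2$ lies in $(0,\zeta^2)$ and is not one of the classical special values where the complete $\Pi$ reduces to $K$ and $E$. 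It is your fallback of differentiating $\Pi$ directly that is the route that goes through.
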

\textit{Proof.} As noted in the proof of Theorem \ref{t2}, the maximum of the renormalized areas $\mathcal{A}_R$ of the catenoids $M_c$, $c>0$, is attained at the value $\zeta_+\in(\sqrt{2}/2,1)$ such that $K(\zeta_+)=2E(\zeta_+)$. 

We will now compute at what value of $c>0$ (equivalently, $\zeta\in(\sqrt{2}/2,1)$) the catenoids $M_c$ attain the maximum height and check that it is precisely $\zeta_+$. The height of the catenoids $M_c$ is just the height of their generating curves, which was introduced in Part (v) of Proposition \ref{prop} (recall that $n=1$). Since the hyperbolic tangent is always increasing, it is enough to understand the integral
$$\widehat{\Psi}_1(c)={\rm arctanh}\left(\Psi_1(c)\right)=\frac{c}{2}\int_0^\alpha \frac{u}{(c^2u+1)\sqrt{u(-u^2+c^2u+1)}}\,du\,,$$
where $\alpha=(c^2+\sqrt{c^4+4})/2$ is the only positive root of the polynomial $Q_{1,c}(u)=-u^2+c^2u+1$. Taking the $c^2$ out of the denominator and adding and subtracting $1/c^2$ to the numerator, we decompose $\widehat{\Psi}_1$ as
$$\widehat{\Psi}_1(c)=\frac{1}{2c}\int_0^\alpha \frac{du}{\sqrt{u\,Q_{1,c}(u)}}-\frac{1}{2c^3}\int_0^\alpha \frac{du}{(u+1/c^2)\sqrt{u\,Q_{1,c}(u)}}\,.$$
The first integral above is a multiple of the standard complete elliptic integral of the first kind $K$ (see, for instance, 3.131-6 of \cite{GR}); while the second one is a multiple of the standard complete elliptic integral of the third kind (see, for instance, 3.137-6 of \cite{GR}), which is defined as
$$\Pi(\eta,\zeta)=\int_0^1 \frac{dx}{(1-\eta x^2)\sqrt{(1-x^2)(1-\zeta^2x^2)}}\,.$$ 
More precisely, combining everything and simplifying, we obtain that
\begin{equation}\label{third} \widehat{\Psi}_1(c)=\frac{\sqrt{1-\zeta^2}}{\zeta \sqrt{2\zeta^2-1}}\left(\zeta^2 K(\zeta)-(1-\zeta^2)\Pi\left(\frac{2\zeta^2-1}{\zeta^2},\zeta\right)\right),
\end{equation}
where $\zeta\in(\sqrt{2}/2,1)$ is as in the statement of Theorem \ref{t2}, namely,
$$\zeta=\sqrt{\frac{c^2+\sqrt{c^4+4}}{2\sqrt{c^4+4}}}\,.$$
Hence, using the formulas for the derivatives of $K(\zeta)$ and $\Pi(\eta,\zeta)$ given in 8.123-2 of \cite{GR} and \cite{BF}, and simplifying, we deduce that
$$\frac{d}{d\zeta}\widehat{\Psi}_1(c)=\frac{-1}{\sqrt{1-\zeta^2}\sqrt{2\zeta^2-1}}\left(K(\zeta)-2E(\zeta)\right).$$
Consequently, the maximum of $\widehat{\Psi}_1(c)$ (and so of $\Psi_1(c)$) is also attained at $\zeta_+\in(\sqrt{2}/2,1)$ such that $K(\zeta_+)=2E(\zeta_+)$. This completes the proof. \hfill$\square$
\\

As shown in the proofs above, the maximum of $\mathcal{A}_R(M_c)$ is attained at the value of the parameter $\zeta_+\in(\sqrt{2}/2,1)$ such that $K(\zeta_+)=2E(\zeta_+)$. Even though this value cannot be explicitly obtained with standard techniques, it can be numerically approximated as $\zeta_+\simeq 0.9089$ (equivalently, $c_+\simeq 1.3118$). According to Corollary \ref{cor}, at the value $c_+$, the corresponding catenoid $M_{c_+}$ attains the maximum height (see Part (v) of Proposition \ref{prop} for its definition), namely, $\Psi_1(c_+)=\tanh(\widehat{\Psi}_1(c_+))$. From \eqref{third}, we compute $\widehat{\Psi}_1(c_+)\simeq 0.501143$. Observe that this value coincides with previous studies \cite{G,M,W}. Indeed, it is exactly $\rho(a_c)$ in the notation of Theorem 1.3 of \cite{W}. The catenoid $M_{c_+}$ with maximum renormalized area and maximum height is shown in the front page.

\section{Renormalized Area of Catenoids in $\mathbb{H}^5$}

Let $M_c$, $c\in(0,\infty)$, be a catenoid in $\mathbb{H}^5$. The catenoid $M_c$ can be understood as the warped product hypersurface $\gamma_c\times_f\mathbb{S}^3$, where the generating curve $\gamma_c$ is a $2$-catenary ($3/4$-elastic curve) in $\mathbb{H}^2$ and the warping function is given by
$$f(s)=\frac{3}{c\sqrt{u(s)}}\,,$$
where $u(s)=\kappa^{1/2}(s)$ is the normalized curvature. Hence, a explicit parameterization can be directly deduced from the parameterization \eqref{param} of $\gamma_c(s)$ and the standard parameterization of $\mathbb{S}^3$. In particular, the metric induced on $M_c$ from $\mathbb{H}^5\subset\mathbb{L}^6$ is $ds^2+27(c^2u)^{-3}\overline{g}$, where $\overline{g}$ is the round metric on $\mathbb{S}^3$.

We next compute the renormalized area $\mathcal{A}_R$ of the catenoids $M_c$, $c\in(0,\infty)$, in $\mathbb{H}^5$ as a hyperelliptic integral.

\begin{theorem}\label{t3} Let $M_c$, $c\in(0,\infty)$, be a non-totally geodesic rotational minimal hypersurface in $\mathbb{H}^5$. The renormalized area of $M_c$ is given by the hyperelliptic integral
	\begin{equation}\label{renA4}
		\mathcal{A}_R(M_c)=\frac{6\pi^2}{c^3}\int_0^\alpha \frac{u^6-6u^2}{\sqrt{u(-u^4+c^2u+9)}}\,du\,,
	\end{equation}
where $\alpha$ is the only positive solution of $-u^4+c^2u+9=0$.

Moreover, the renormalized area $\mathcal{A}_R$ of $M_c$ varies continuously from $-\infty$, when $c\to 0^+$, to $8\pi^2/3$, as $c\to\infty$.
\end{theorem}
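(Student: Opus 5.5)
Following the proof of Theorem \ref{t2}, apply Proposition \ref{prop2} with $n=2$ to the truncations $M_\epsilon=M_c\cap\{z\leq 1/\epsilon\}$:
$$\mathcal{A}(M_\epsilon)=\int_{M_\epsilon}\lambda^2\,dA-\frac{2}{9}\int_{M_\epsilon}\kappa^2\,dA-\frac{1}{81}\int_{M_\epsilon}\kappa^4\,dA\,.$$
The first term is handled with the Chern--Gauss--Bonnet formula for the 4-manifold $M_\epsilon$. Since $M_c$ is a warped product $\gamma_c\times_f\mathbb{S}^3$ over a curve, it is locally conformally flat, so the Weyl term vanishes. The Pfaffian then reduces to a combination of $\sigma_2$ of the Schouten tensor, i.e. $\lambda^2$ and $|\mathrm{Ric}|^2$ (or $|E|^2$, the traceless Ricci).

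Concretely, I would write
$$8\pi^2\chi(M_\epsilon)=\int_{M_\epsilon}\big(c_1\lambda^2-c_2|E|^2\big)\,dA+\oint_{\partial M_\epsilon}\mathcal{T}\,,$$
with $\chi(M_\epsilon)=0$ because $M_\epsilon\cong[-s_\epsilon,s_\epsilon]\times\mathbb{S}^3$. This lets me trade $\int\lambda^2$ for $\int|E|^2$ plus a boundary term.

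By Gauss's equation and \eqref{pk}, the Ricci eigenvalues of $M_c$ are explicit in $\kappa$. With principal curvatures $\kappa,-\kappa/3,-\kappa/3,-\kappa/3$, the $\gamma$-direction has $\mathrm{Ric}=-3-\kappa^2/3$ and each sphere direction has $\mathrm{Ric}=-3+\kappa^2/9$. Hence $|E|^2$ is a constant multiple of $\kappa^4$. Every interior integrand thus becomes a polynomial in $\kappa^2$, and the renormalized area reduces to $\int_{M_c}(A\kappa^4+B\kappa^2)\,dA$ plus the constant term of the boundary integral.

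Next comes the boundary term. The transgression form on $\partial M_\epsilon$, a round 3-sphere of radius $f(s_\epsilon)$, is built from the second fundamental form of $\partial M_\epsilon\subset M_\epsilon$, namely $f'/f$, and from curvature. I would expand it in $\epsilon$ using the parameterization \eqref{param}, with $z\sim \sqrt{c^2u+9}\cosh v/(c\sqrt u)$ and $u\to0$. The claim to verify, analogous to (3.11) of \cite{AM} and the Gauss--Bonnet formula of \cite{T}, is that this boundary contribution has zero constant term. Equivalently, I could simply invoke the formula of \cite{T}, $\mathcal{A}_R=\tfrac{4\pi^2}{3}\chi-\tfrac18\int|W|^2+\ldots$, specialized to $W=0$ and $\chi=0$.

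With $dA=27(c^2u)^{-3/2}\,ds\,d\mathrm{vol}_{\mathbb{S}^3}$, $|\mathbb{S}^3|=2\pi^2$, $\kappa=u^2$, and the change of variables $ds=3\,du/(2u\sqrt{Q_{2,c}(u)})$ from \eqref{ODE}, with the factor $2$ from the symmetry of Proposition \ref{prop}(iii), the integrals become
$$\frac{C}{c^3}\int_0^\alpha\frac{a u^{6}+b u^{2}}{\sqrt{u\,Q_{2,c}}}\,du\,.$$
Matching the coefficients to $6\pi^2(u^6-6u^2)$ is bookkeeping.

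For the limits, continuity in $c$ follows from dominated convergence after the substitution $u=\alpha t$, which removes the moving endpoint. The integrable singularities at $t=0$ and $t=1$ behave like $t^{-1/2}$ and $(1-t)^{-1/2}$.

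As $c\to0^+$ we have $\alpha\to\sqrt3$, and the integral tends to $\int_0^{\sqrt3}(u^6-6u^2)/\sqrt{u(9-u^4)}\,du$. Using $u^6-6u^2=u^2(u^4-9)+3u^2$, this equals $-\int\sqrt{9-u^4}\,u^{3/2}\,du+3\int u^{3/2}/\sqrt{9-u^4}\,du$. I need to show it is negative; then the prefactor $6\pi^2/c^3$ gives $-\infty$. I would try integrating by parts using $d(\sqrt{9-u^4})$.

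As $c\to\infty$, we have $\alpha\approx c^{2/3}$. I would rescale $u=\alpha t$ and use $Q=\alpha^4(t-t^4)+9$ with $c^2=\alpha^3-9/\alpha$. The leading orders must cancel exactly, leaving the limit $8\pi^2/3$, which is $2\mathcal{A}_R(\mathbb{H}^4)$. I expect this asymptotic analysis, where the main terms cancel and the $u^2$ part concentrates near $u=0$ (giving the finite contribution), to be the main obstacle, together with the vanishing of the boundary constant term. For the limit I would first try splitting the integral at $u=\alpha^{-\delta}$ and expanding each piece.
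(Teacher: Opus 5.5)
Your route is the paper's: Proposition \ref{prop2} with $n=2$, then Chern--Gauss--Bonnet with $W=0$ and $\chi(M_\epsilon)=0$, the boundary constant term disposed of by Claim 4.3 of \cite{T}, and finally the change of variables via \eqref{ODE}. However, the one computation that actually determines the theorem is wrong. The values $-3-\kappa^2/3$ and $-3+\kappa^2/9$ are not the Ricci curvatures. Their trace is $-12$, which contradicts \eqref{lambdaB}, and $\mathrm{Ric}>-3$ is impossible on a minimal hypersurface of $\mathbb{H}^5$. Gauss plus minimality give $\mathrm{Ric}(e_i,e_i)=-3-\kappa_i^2$, that is, $-3-\kappa^2$ and $-3-\kappa^2/9$. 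Hence $E=-B^2+\tfrac14\lvert B\rvert^2 g$ and $\lvert E\rvert^2=\lvert B^2\rvert^2-\tfrac14\lvert B\rvert^4=\tfrac{16}{27}\kappa^4$, whereas your values give $\tfrac{4}{27}\kappa^4$. This is not harmless. The correct normalization is $\int\lambda^2=\tfrac{4\pi^2}{3}\chi-\tfrac1{24}\int\lvert W\rvert^2+\tfrac1{12}\int\lvert E\rvert^2-\tfrac13\oint S$ (your $c_1=6$, $c_2=\tfrac12$). With it, your value makes the $\kappa^4$ terms cancel exactly, since $\tfrac1{12}\cdot\tfrac4{27}=\tfrac1{81}$. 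That leaves $\mathcal{A}_R=-\tfrac29\int\kappa^2\,dA$: no $u^6$ term, a negative answer for every $c$, and limit $0$ at infinity. So the coefficient matching you call bookkeeping is where the content lies. With the correct $\lvert E\rvert^2$ one gets $\tfrac4{81}-\tfrac1{81}=\tfrac1{27}$, i.e.\ $\mathcal{A}_R(M_c)=\tfrac1{27}\int_{M_c}(\kappa^4-6\kappa^2)\,dA$, which your change of variables then turns into \eqref{renA4}.

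Your picture of the limit $c\to\infty$ is also backwards, and the splitting at $u=\alpha^{-\delta}$ is unnecessary. With $u=c^{2/3}x$ one has $u\,Q_{2,c}(u)=c^{10/3}x\,(x-x^4+9c^{-8/3})$. The $u^6$ part of \eqref{renA4} then converges directly, by dominated convergence, to $6\pi^2\int_0^1x^5(1-x^3)^{-1/2}\,dx=2\pi^2\int_0^1y(1-y)^{-1/2}\,dy=8\pi^2/3$, while the $u^2$ part is $O(c^{-8/3})$. Nothing cancels. The finite limit comes from the $\kappa^4$ term in the range $u\sim\alpha$ (the high-curvature neck), not from the $u^2$ term near $u=0$, and it is exactly the term your Ricci values would have killed. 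For $c\to0^+$, your integration by parts does work. One has $\frac{d}{du}\big(u^{5/2}\sqrt{9-u^4}\big)=\tfrac92u^{3/2}\sqrt{9-u^4}-18u^{3/2}(9-u^4)^{-1/2}$ and the boundary terms vanish. Hence $\int_0^{\sqrt3}u^{3/2}\sqrt{9-u^4}\,du=4J$ with $J=\int_0^{\sqrt3}u^{3/2}(9-u^4)^{-1/2}\,du>0$, and the limit integral equals $-J<0$. The paper instead evaluates this integral with Beta functions as $-2\cdot3^{1/4}\sqrt{\pi}\,\Gamma(5/8)/\Gamma(1/8)$, which is the same number.
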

\textit{Proof.} Consider the part $M_\epsilon=M_c\cap\{z\leq 1/\epsilon\}$, $\epsilon>0$ sufficiently small, of a catenoid $M_c$, $c\in(0,\infty)$, in $\mathbb{H}^5$. We apply Proposition \ref{prop2} with $n=2$ to the truncated catenoid $M_\epsilon$, to get
\begin{eqnarray*}
	\mathcal{A}(M_\epsilon)&=&\int_{M_\epsilon}\lambda^2\,dA-\frac{2}{9}\int_{M_\epsilon} \kappa^2\,dA-\frac{1}{81}\int_{M_\epsilon}\kappa^4\,dA\\
	&=&\frac{1}{12}\int_{M_\epsilon} \lvert E\rvert^2\,dA-\frac{1}{3}\oint_{\partial M_\epsilon} S\,ds-\frac{2}{9}\int_{M_\epsilon}\kappa^2\,dA-\frac{1}{81}\int_{M_\epsilon}\kappa^4\,dA\,,
\end{eqnarray*}
where $E$ is the trace-free Ricci tensor and $S$ is a boundary curvature whose explicit expression can be found, for instance, in (8) of \cite{PT} (since it will not play any role, we avoid writing it here). Observe that for the last equality, we have employed the Chern--Gauss--Bonnet formula (see, for instance, \cite{CC,CS,PT,T})
$$\int_{M_\epsilon}\lambda^2\,dA=\frac{4}{3}\pi^2\chi(M_\epsilon)-\frac{1}{24}\int_{M_\epsilon}\lvert W\rvert^2\,dA+\frac{1}{12}\int_{M_\epsilon}\lvert E\rvert^2\,dA-\frac{1}{3}\oint_{\partial M_\epsilon} S\,ds\,,$$
together with $\chi(M_\epsilon)=0=\lvert W\rvert^2$. Since $M_\epsilon$ is a part of a catenoid preserving its topology, its Euler characteristic is zero. In addition, since it is rotational, it is also locally conformally flat (see Page 704 of \cite{DCD} and references therein) and so its Weyl tensor $W$ vanishes. From Claim 4.3 of \cite{T}, the boundary integral above has no constant term when expanding it in terms of $\epsilon>0$. Moreover, from the Gauss' equation we can rewrite the squared norm of the trace-free Ricci tensor $\lvert E\rvert^2$ in terms of the second fundamental form $B$ as (c.f., (23) of \cite{PT} and (4.41) of \cite{T})
$$\lvert E\rvert^2=\lvert B^2\rvert^2-\frac{1}{4}\lvert B\rvert^4\,.$$
In turn, we deduce from \eqref{B2} (for $n=2$) that for catenoids, $\lvert B\rvert^4=16\kappa^4/9$, while
$$\lvert B^2\rvert^2=\sum_{i=1}^4 \left(\kappa_i^2\right)^2=\kappa^4+\sum_{i=2}^4 \frac{\kappa^4}{3^4}=\frac{28}{27}\,\kappa^4\,,$$
since $\kappa_1=\kappa$ and $\kappa_2=\kappa_3=\kappa_4=-\kappa/3$ (see \eqref{pk}).

Combining everything we conclude with
$$\mathcal{A}_R(M_c)=\frac{1}{27}\int_{M_c}\kappa^4\,dA-\frac{2}{9}\int_{M_c}\kappa^2\,dA=\frac{1}{27}\int_{M_c}\left(\kappa^4-6\kappa^2\right)dA\,.$$
The convergence of these integrals will be clear after the following manipulations (see also Remark \ref{convergence}). 

According to Theorem \ref{t1}, the catenoid $M_c$ is the warped product hypersurface $\gamma_c\times_f\mathbb{S}^3$, where $\gamma_c$ is a $2$-catenary, $f=3/(c\sqrt{u})$, and $u=\kappa^{1/2}$. Hence, $dA=27(c^2u)^{-3/2}d\overline{A}\,ds$. From this, and employing the Euler--Lagrange equation \eqref{ODE} to make a change of variable, we get 
$$\mathcal{A}_R(M_c)=\frac{2\pi^2}{c^3}\int_{\gamma_c}\left(u^{13/2}-6u^{5/2}\right)ds=\frac{6\pi^2}{c^3}\int_0^\alpha \frac{u^6-6u^2}{\sqrt{u(-u^4+c^2u+9)}}\,du\,,$$
where $\alpha$ is the only positive root of the polynomial $Q_{2,c}(u)=-u^4+c^2u+9$. This proves the first part of the statement. 

For the second part of the statement, we note that the right-hand side of \eqref{renA4} is a continuous function in $c\in(0,\infty)$ and compute its asymptotic behavior. We begin considering the limit $c\to 0^+$. For this, we focus on
$$I_2=\lim_{c\to 0^+} \int_0^\alpha \frac{u^6-6u^2}{\sqrt{u(-u^4+c^2u+9)}}\,du=\int_0^{\sqrt{3}}\frac{u^6-6u^2}{\sqrt{u(-u^4+9)}}\,du\,.$$
To explicitly compute the value of the integral $I_2$, we make the change of variable $u=\sqrt{3}\,x^{1/4}$, obtaining
$$I_2=\frac{3^{5/4}}{4}\int_0^1 \frac{3x^{5/8}-2x^{-3/8}}{\sqrt{1-x}}\,dx=\frac{3^{5/4}}{4}\left(3\int_0^1 x^{5/8}(1-x)^{-1/2}dx-2\int_0^1x^{-3/8}(1-x)^{-1/2}dx\right).$$
The two integrals on the right-hand side above are Beta functions $\beta(\cdot,\cdot)$, hence,
$$I_2=\frac{3^{5/4}}{4}\left(3\,\beta\left(\frac{13}{8},\frac{1}{2}\right)-6\,\beta\left(\frac{5}{8},\frac{1}{2}\right)\right)=-\frac{2\cdot 3^{1/4}\sqrt{\pi}\,\Gamma\left(\frac{5}{8}\right)}{\Gamma\left(\frac{1}{8}\right)}\,,$$
where we have used the standard relation between the Beta function and the Gamma function $\Gamma(\cdot)$ and that $\Gamma(1/2)=\sqrt{\pi}$. Therefore, $I_2<0$ is finite and so
$$\lim_{c\to 0^+}\mathcal{A}_R(M_c)=-\infty\,.$$

For the limit when $c\to\infty$, we first observe that the positive root $\alpha$ of $-u^4+c^2u+9$ behaves asymptotically as $\alpha\simeq c^{2/3}$. After applying the dominated convergence theorem and making the change of variable $u=c^{2/3}x$ (and after $y=x^3$), we get
$$\lim_{c\to\infty}\mathcal{A}_R(M_c)=6\pi^2\int_0^1\frac{x^5}{\sqrt{1-x^3}}\,dx=2\pi^2\int_0^1\frac{y}{\sqrt{1-y}}\,dy=\frac{8\pi^2}{3}\,,$$
since only the leading term of the numerator plays a role when $c\to\infty$. \hfill$\square$
\\

Contrary to the case of minimal surfaces in $\mathbb{H}^3$, the above result shows that the renormalized area of minimal hypersurfaces in $\mathbb{H}^5$ does not have a sign. Indeed, for the family of catenoids $\{M_c\}$, $c>0$, it varies from $-\infty$ to twice the renormalized area of the hyperbolic $4$-space $\mathbb{H}^4$ (c.f., Appendix A), namely, $2\mathcal{A}_R(\mathbb{H}^4)=8\pi^2/3$. This is in agreement with the evolution of $2$-catenaries as the parameter $c>0$ varies (see Section 3).

\begin{figure}[h!]
	\begin{center}
		\includegraphics[height=4cm,width=7.5cm]{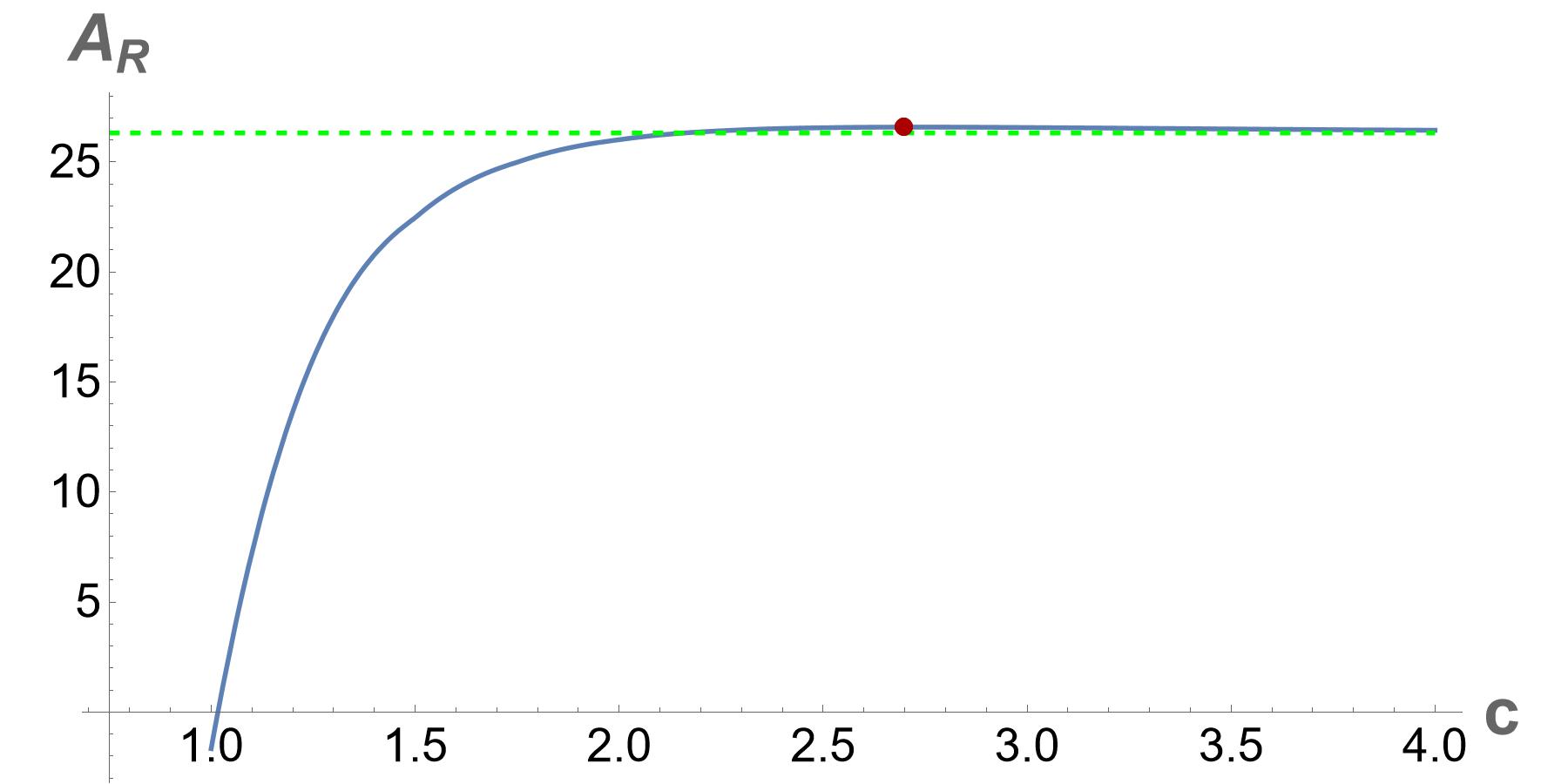}\quad\quad\quad\includegraphics[height=4cm,width=4.5cm]{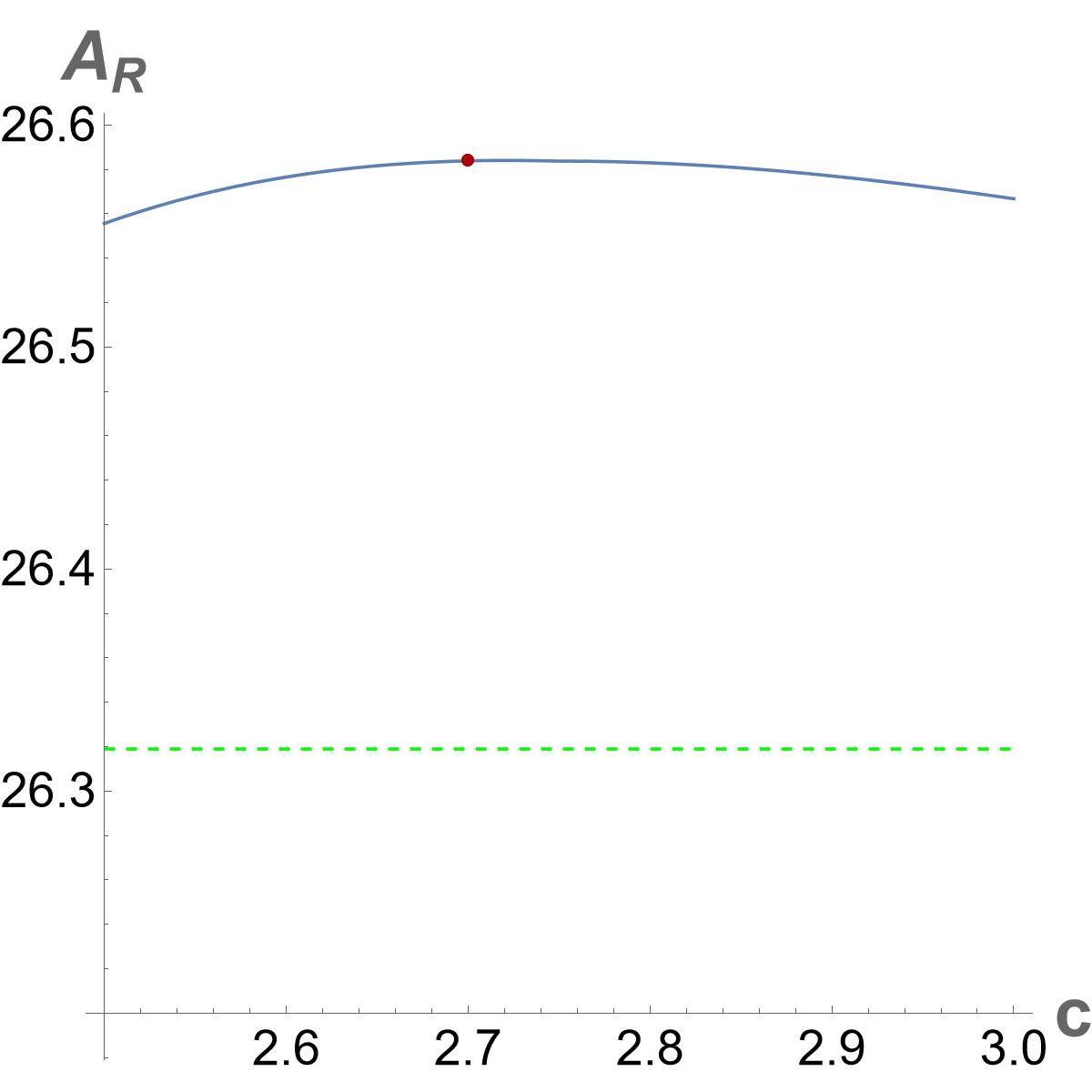}
	\end{center} 
	\caption{\small{Left: Graph of the renormalized area $\mathcal{A}_R$ of the catenoids $M_c$ in $\mathbb{H}^5$ as a function of the parameter $c\in(0,\infty)$. The green dashed line represents twice the renormalized area of $\mathbb{H}^4$, i.e., $8\pi^2/3\simeq 26.3189$. The red dot is the maximum of $\mathcal{A}_R(M_c)$ which is approximately $26.5840$, slightly larger than $8\pi^2/3$. Right: Zoom in the maximum of $\mathcal{A}_R(M_c)$.}}
	\label{G4}
\end{figure}

The geometric behavior of $\mathcal{A}_R(M_c)$ is illustrated in Figure \ref{G4} and it is analogous to that of the case of surfaces. More precisely, the renormalized area $\mathcal{A}_R(M_c)$ increases from $-\infty$ until it attains its maximum $\mathcal{A}_R^+>8\pi^2/3$ and then it decreases approaching $8\pi^2/3$ from above. Moreover, a numerical computation of the height $\Psi_2(c)$ (see Part (v) of Proposition \ref{prop}) suggests that $\mathcal{A}_R^+$ is attained at the catenoid with maximum height, in analogy with the case of surfaces.

\section{Computations in Higher Dimensions}

In this section we will generalize Theorems \ref{t2} and \ref{t3} to higher dimensional cases computing the renormalized area of catenoids $M_c$, $c\in(0,\infty)$, in $\mathbb{H}^{2n+1}$ in terms of a hyperelliptic integral.

An essential step on the proofs of Theorems \ref{t2} and \ref{t3} was to employ the Gauss--Bonnet and Chern--Gauss--Bonnet formulas for $n=1$ and $n=2$, respectively. Thus, a reasonable approach for higher dimensional cases should involve the Chern--Gauss--Bonnet formulas for locally conformally flat manifolds (see, for instance, \cite{CC,CS,V}).

Let $M_c$, $c\in(0,\infty)$, be a catenoid in $\mathbb{H}^{2n+1}$ and denote by $M_\epsilon=M_c\cap\{z\leq 1/\epsilon\}$, for $\epsilon>0$ sufficiently small, the truncated catenoid. For $M_\epsilon$, the Chern--Gauss--Bonnet formulas read 
\begin{equation}\label{CGB}
	\int_{M_\epsilon}\sigma_n(P)\,dA+\oint_{\partial M_\epsilon} S_n\,ds=\frac{(2\pi)^n}{n!}\chi(M_\epsilon)\,,
\end{equation}
where $\chi(M_\epsilon)$ is the Euler characteristic of $M_\epsilon$ (which, for $\epsilon>0$ sufficiently small, coincides with that of the catenoid $M_c$ and so is zero), $\sigma_n(P)$ is the $n$-th elementary symmetric polynomial of the eigenvalues of the Schouten tensor $P$ and $S_n$ is a boundary curvature (for details about its definition, we refer the reader to \cite{CC,CS}). 

The following result shows that the boundary integral in \eqref{CGB} does not have a constant term when considering its expansion in terms of $\epsilon>0$ and, hence, it does not contribute to the renormalized area.

\begin{lemma}\label{lemma} Let $M$ be a rotational minimal hypersurface in $\mathbb{H}^{2n+1}$ and denote by $M_\epsilon=M\cap\{z\leq 1/\epsilon\}$, $\epsilon>0$, the truncated hypersurface. Then, the expansion of 
	$$\oint_{\partial M_\epsilon} S_n\,ds\,,$$
in terms of $\epsilon>0$ has no constant term.
\end{lemma}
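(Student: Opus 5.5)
We would prove the lemma by writing everything on the boundary in terms of the single warped-product variable $s$ (or equivalently $\epsilon$) and showing that the resulting expansion contains only odd powers of $1/\epsilon$ (or powers with no $\epsilon^0$ term). By rotational symmetry and Part (iii) of Proposition \ref{prop}, $\partial M_\epsilon$ consists of two round $(2n-1)$-spheres $\{s=\pm s_\epsilon\}\times\mathbb{S}^{2n-1}$. Each has radius $f(s_\epsilon)$ in the warped metric $ds^2+f^2\overline{g}$ of Theorem \ref{t1}, so it suffices to handle one of them and double. Since $S_n$ is a universal polynomial in the intrinsic curvature of $M_\epsilon$ (the Schouten tensor restricted to the boundary) and in the second fundamental form of $\partial M_\epsilon\subset M_\epsilon$, and everything is $O(2n)$-invariant, $S_n$ is constant on each boundary sphere. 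Hence
$$\oint_{\partial M_\epsilon}S_n\,ds=2\,\omega_{2n-1}f(s_\epsilon)^{2n-1}S_n(s_\epsilon).$$
For a warped product, the boundary sphere is umbilic with principal curvatures $f'/f$. The Schouten tensor has two eigenvalues, radial and spherical, which are explicit rational expressions in $f,f',f''$. By the Gauss equation these can also be written via $\kappa$ and $\lambda$, i.e. via $u$ and its derivatives. So $S_n$ is a polynomial in $f'/f$, $1/f^2$ and $f''/f$.

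Next we pass to $u$. Since $f=(2n-1)/(c\sqrt u)$, we have $f'/f=-u'/(2u)$, and by \eqref{ODE}, $(u')^2=\tfrac{4u^2}{(2n-1)^2}Q_{n,c}(u)$. Likewise $f''$ is polynomial in $u$ after using the derivative of \eqref{ODE}. Thus $f^{2n-1}S_n$ becomes a function of the form
$$u^{-(2n-1)/2}\,F\bigl(u,\sqrt{Q_{n,c}(u)}\bigr),$$
with $F$ polynomial, and $u\to0^+$ as $s\to\infty$. The expression contains $\sqrt{Q_{n,c}(u)}$ only through $f'/f$, and the integrand's parity in $f'$ matters. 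Writing $\sqrt{Q_{n,c}(u)}=(2n-1)\sqrt{1+\tfrac{c^2u}{(2n-1)^2}-\tfrac{u^{2n}}{(2n-1)^2}}$, we obtain an expansion in half-integer powers of $u$ times powers of $u$.

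We then relate $u$ to $\epsilon$ using the parameterization \eqref{param}. There $z=\sqrt{c^2u+(2n-1)^2}\cosh v/(c\sqrt u)=1/\epsilon$, and $v(s)$ converges to a finite limit $v_\infty$ as $s\to\infty$. The difference $v-v_\infty$ is $\int_s^\infty$ of $u^{n+1/2}$ divided by a polynomial, and after changing variables via \eqref{ODE} this is a convergent series in $u^{n+1/2}$, $u^{n+3/2},\dots$. Inverting gives $\sqrt u=\epsilon\,G(\epsilon)$, and we must check which powers of $\epsilon$ occur in $G$. The final step is to collect the coefficient of $\epsilon^0$ in $f^{2n-1}S_n$ and show it vanishes. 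We expect the surviving terms to be $\epsilon^{-(2n-1)},\epsilon^{-(2n-3)},\dots$, i.e. only odd negative powers down to $\epsilon^{-1}$, plus $o(1)$. This mirrors Claim 4.3 of \cite{T} and (3.11) of \cite{AM}.

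The main obstacle is this bookkeeping. The map $\epsilon\mapsto u$ mixes parity once $c^2u$ and $u^{n+1/2}$ corrections enter. We must show these corrections first affect the expansion at order $\epsilon^{1}$ or higher, so that they cannot produce an $\epsilon^0$ term from the $\epsilon^{-(2n-1)}$ leading factor. Counting orders, $u^{n+1/2}\sim\epsilon^{2n+1}$ times $\epsilon^{-(2n-1)}$ gives $\epsilon^2$, which is harmless. The $c^2u\sim\epsilon^2$ corrections preserve parity, since they are even. So the proof reduces to verifying that all parity-breaking contributions are of order at least $\epsilon^{2n+1}$ relative to the leading term.

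An alternative, if this gets messy, is to use the general fact for minimal hypersurfaces, that the expansion of $\oint S_n$ is odd in $\epsilon$ (as in \cite{GW,GT}). However, we would rather keep the direct rotational computation above.
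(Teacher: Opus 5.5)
Your proof is correct, and it takes a genuinely different route from the paper's.

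\textbf{The paper's route.} The paper never expands $S_n$. It doubles $M$ across the ideal boundary in the ball model and applies the Chern--Gauss--Bonnet formula both to the closed double $\widetilde{M}$ and to $M_\epsilon^+\cup M_\epsilon^-$. It then uses that the boundary expansion on the reflected piece is the original one with $\epsilon\mapsto-\epsilon$, so odd powers cancel and $2C$ remains. Finally it lets $\epsilon\to0^+$, so that the right-hand side tends to the vanishing closed-manifold identity. This is short and does not use the explicit profile curve. However, it rests on two steps the paper takes for granted: the $\epsilon\mapsto-\epsilon$ correspondence, and passing to the limit in the formula on the doubled hypersurface.

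\textbf{Your route.} Your approach is local at each end and can be checked line by line. On the boundary sphere,
$f'/f=\sqrt{Q_{n,c}(u)}/(2n-1)$, $f''/f=1+u^{2n}/(2n-1)$ and $f^{-2}=c^2u/(2n-1)^2$ are all analytic in $u$ near $0$. Hence $f^{2n-1}S_n=u^{-(2n-1)/2}\Phi(u)$ with $\Phi$ analytic, which is an odd Laurent series in $t=\sqrt{u}$.

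For the change of variable, $\epsilon=ct/\bigl(\sqrt{c^2t^2+(2n-1)^2}\,\cosh v\bigr)$ with $v-v_\infty=t^{2n+1}h(t^2)$. The only parity-breaking piece is $\sinh v_\infty\sinh(v-v_\infty)=O(t^{2n+1})$. So $t$ is odd in $\epsilon$ up to $O(\epsilon^{2n+2})$, and $t^{-(2n-1)}$ is affected only from order $\epsilon^{2}$ on.

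This is exactly your order count, so you can state the conclusion rather than ``expect'' it. Your argument buys several things the paper's does not:
\begin{itemize}
\item It shows directly that all even singular coefficients vanish, not just the constant term.
\item It needs neither the reflection symmetry nor any topological input.
\item Each end can be treated on its own, so translations along the axis are harmless.
\end{itemize}

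\textbf{Two small points.}
\begin{itemize}
\item The parity of $S_n$ in $f'$ plays no role. Since $\sqrt{Q_{n,c}}$ is analytic at $u=0$, the oddness comes entirely from the factor $f^{2n-1}\propto u^{-(2n-1)/2}$, i.e.\ from the boundary being $(2n-1)$-dimensional.
\item The lemma is also applied in Appendix A to the totally geodesic $\mathbb{H}^{2n}$, which is not covered by \eqref{param} and \eqref{ODE}, so you should add that case. It is immediate with $f=\sinh r$ and $z=\cosh r$: then $f$ is odd in $\epsilon$, while $f'/f$, $f''/f$ and $f^{-2}$ are even.
\end{itemize}
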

\textit{Proof.} Let $M_\epsilon=M\cap\{z\leq 1/\epsilon\}$ be the truncated part of a rotational minimal hypersurface in $\mathbb{H}^{2n-1}$. The expansion of the boundary integral of the statement in terms of $\epsilon>0$ is given by
\begin{equation}\label{*1}
	\oint_{\partial M_\epsilon} S_n\,ds=\sum_{i=1}^{2n-1} \frac{A_i}{\epsilon^i}+C+\mathcal{O(\epsilon)}\,,
\end{equation}where $A_i$, $i=1,...,2n-1$, and $C$ are arbitrary constants. We will show that $C=0$ must hold.

Think of $M$ as a hypersurface in $\mathbb{H}^{2n-1}$, identified by the higher dimensional analogue of \eqref{ident} as a hypersurface in the ball model, and consider its double $\widetilde{M}$. Since $M$ is a rotational minimal hypersurface, it approaches the ideal boundary at a right angle. Hence, $\widetilde{M}$ is a sufficiently regular closed hypersurface.

Denote by $M_\epsilon^+$ to $M_\epsilon$ and by $M_\epsilon^-$ the corresponding reflected part on the double of $\widetilde{M}$. It then follows that
$$\lim_{\epsilon\to 0^+} M_\epsilon^+\cup M_\epsilon^-=\widetilde{M}\,.$$
We will apply the Chern--Gauss--Bonnet formulas for locally conformally flat manifolds (see \eqref{CGB}) to $M_\epsilon^+\cup M_\epsilon^-$ and to $\widetilde{M}$ and compare them in the limit when $\epsilon\to 0^+$. For the closed hypersurface $\widetilde{M}$, we have
\begin{equation}\label{*2}
	\frac{\left(2\pi\right)^n}{n!}\chi(\widetilde{M})-\int_{\widetilde{M}} \sigma_n(P)\,dA=0\,.
\end{equation}
On the other hand, for $\widetilde{M}_\epsilon=M_\epsilon^+\cup M_\epsilon^-$, we obtain
\begin{eqnarray}\label{*3}
	2\sum_{j=1}^{n-1}\frac{A_{2j}}{\epsilon^{2j}}+2C+\mathcal{O}(\epsilon)&=&\oint_{\partial M_\epsilon^+} S_n\,ds+\oint_{\partial M_\epsilon^-} S_n\,ds=\oint_{\partial\widetilde{M}_\epsilon} S_n\,ds\nonumber\\
	&=&\frac{\left(2\pi\right)^n}{n!}\chi(\widetilde{M}_\epsilon)-\int_{\widetilde{M}_\epsilon}\sigma_n(P)\,dA\,,
\end{eqnarray}
where the first equality follows from \eqref{*1} and the fact that the expressions of the integrals over $\partial M_\epsilon^+$ and $\partial M_\epsilon^-$ are equal after the change $\epsilon\mapsto -\epsilon$. Hence, the odd powers of $\epsilon$ cancel out, while the even powers appear twice. (When $n=1$, the sum on the left of \eqref{*3} does not appear.)

Finally, observe that taking the limit when $\epsilon\to 0^+$, the right-hand side of \eqref{*3} becomes \eqref{*2} and so it vanishes. In particular, we deduce that $C=0$ proving the result. \hfill$\square$
\\

We will next compute the $n$-th elementary symmetric polynomial of the eigenvalues of the Schouten tensor $P$, namely, $\sigma_n(P)$. Recall that the Schouten tensor is defined as
\begin{equation}\label{Schouten}
	P(e_i,e_j)=\frac{1}{2(n-1)}\left({\rm Ric}(e_i,e_j)-n\,\lambda\, \delta_{ij}\right)\,,
\end{equation}
for any orthonormal frame $\{e_1,...,e_{2n}\}$, where ${\rm Ric}$ is the Ricci tensor and $\delta_{ij}$ denotes the Kronecker's delta.

The $n$-th elementary symmetric polynomial $\sigma_n$ can be obtained recursively through the Newton--Girard's identities. Denote by $p_l$, $l=1,...,n$, the power sum symmetric polynomial of degree $l$ in $2n$ variables $\mu_1,...,\mu_{2n}$. That is,
\begin{equation}\label{powersum}
	p_l(\mu_1,...,\mu_{2n})=\sum_{i=1}^{2n}\mu_i^l\,.
\end{equation}
The Newton--Girard's identities are then given by
\begin{equation}\label{NG}
	l\sigma_l=\sum_{j=1}^l(-1)^{j-1}\sigma_{l-j}p_j\,,
\end{equation}
for all $l=1,...,n$. Since $\sigma_0\equiv 1$, the identities \eqref{NG} completely describe recursively the elementary symmetric polynomials up to order $n$ in terms of the power sum polynomials.

In the following result we obtain a general formula for the power sum polynomials of the Schouten tensor $P$.

\begin{lemma}\label{pP} Let $M$ be a rotational minimal hypersurface in $\mathbb{H}^{2n+1}$ and denote by $P$ the Schouten tensor. The power sum symmetric polynomials of $P$ are given by
\begin{equation*}
	p_l(P)=\left(\frac{-1}{2}\right)^l\,\sum_{i=0}^l \begin{pmatrix} l \\ i \end{pmatrix}\frac{(4n-1)^i+(-1)^i(2n-1)}{(2n-1)^{2i}}\,\kappa^{2i}\,,
\end{equation*}
for every $l=1,...,n$. In the above expression $\kappa$ represents the curvature of the generating curve of the hypersurface $M$.
\end{lemma}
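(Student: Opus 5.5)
The plan is to diagonalize the Schouten tensor in the frame of principal directions and then simply sum the $l$-th powers of its eigenvalues. Let $\{e_1,\dots,e_{2n}\}$ be the orthonormal frame of principal directions of $M$, with principal curvatures $\kappa_1=\kappa$ and $\kappa_2=\cdots=\kappa_{2n}=-\kappa/(2n-1)$, as in \eqref{pk}. By the Gauss equation, ${\rm Rm}(e_i,e_j,e_i,e_j)=-1+\kappa_i\kappa_j$ for $i\neq j$, and all off-diagonal Ricci components vanish in this frame. Hence ${\rm Ric}$, and therefore $P$, is diagonal in this frame. The eigenvalues of $P$ are then its diagonal entries $\mu_i=P(e_i,e_i)$.

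\textbf{Step 1: the Ricci and scalar curvatures.} Minimality gives $\sum_{j\neq i}\kappa_j=-\kappa_i$, so
$${\rm Ric}(e_i,e_i)=\sum_{j\neq i}\left(-1+\kappa_i\kappa_j\right)=-(2n-1)-\kappa_i^2\,.$$
From \eqref{lambdaB} and \eqref{B2},
$$\lambda=-1-\frac{\kappa^2}{(2n-1)^2}\,.$$

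\textbf{Step 2: the eigenvalues of $P$.} Substituting into \eqref{Schouten},
$$\mu_i=\frac{1}{2(n-1)}\left(-(n-1)-\kappa_i^2+\frac{n\,\kappa^2}{(2n-1)^2}\right).$$
Write $x=\kappa^2/(2n-1)^2$.

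For $i=1$, the bracket contains $n\kappa^2/(2n-1)^2-\kappa^2=-(n-1)(4n-1)x$, using $(2n-1)^2-n=(n-1)(4n-1)$. This gives
$$\mu_1=-\tfrac12\bigl(1+(4n-1)x\bigr)\,.$$
For $i\ge 2$, we have $\kappa_i^2=x$, so the bracket contains $(n-1)x$. This gives
$$\mu_2=\cdots=\mu_{2n}=-\tfrac12(1-x)\,,$$
with multiplicity $2n-1$. In both cases the factor $n-1$ cancels cleanly, which is the key simplification.

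\textbf{Step 3: the power sums.} By \eqref{powersum},
$$p_l(P)=\left(\frac{-1}{2}\right)^l\left[\bigl(1+(4n-1)x\bigr)^l+(2n-1)(1-x)^l\right].$$
Expanding both powers with the binomial theorem, the coefficient of $x^i$ is $\binom{l}{i}\bigl((4n-1)^i+(-1)^i(2n-1)\bigr)$. Replacing $x^i$ by $\kappa^{2i}/(2n-1)^{2i}$ yields exactly the stated formula.

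The only delicate points are bookkeeping ones. First, one must check the factor $(2n-1)^2-n=(n-1)(4n-1)$, since this is what makes the $1/(2(n-1))$ in \eqref{Schouten} cancel. Second, one must remark that \eqref{Schouten} requires $n\ge 2$; for $n=1$ the statement is only needed formally, through the Gauss--Bonnet formula already used in Section 4.
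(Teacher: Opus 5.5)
Your proof is correct and follows essentially the same route as the paper. You diagonalize $P$ in the principal frame, using ${\rm Ric}(e_i,e_i)=-(2n-1)-\kappa_i^2$ and $\lambda=-1-\kappa^2/(2n-1)^2$, and read off the eigenvalues $-\tfrac12\left(1+(4n-1)x\right)$ and $-\tfrac12(1-x)$, the latter with multiplicity $2n-1$. You then expand the power sums binomially, exactly as the paper does. Your explicit check $(2n-1)^2-n=(n-1)(4n-1)$ and your caveat that \eqref{Schouten} is undefined for $n=1$ (so that case holds only formally) are details the paper leaves implicit.
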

\textit{Proof.} To prove the result, observe first that the power sum symmetric polynomial $p_l(P)$ is just the trace of $P^l$ and so it is independent of the choice of orthonormal frame. Hence, for simplicity, we will consider here the orthonormal frame $\{e_1,...,e_{2n}\}$ composed by the principal directions of the rotational minimal hypersurface $M$ in $\mathbb{H}^{2n+1}$.

Employing the Gauss' equation, we can rewrite the Ricci tensor ${\rm Ric}$ as
\begin{eqnarray*}
	{\rm Ric}(e_i,e_j)&=&\sum_{l=1}^{2n}{\rm Rm}(e_i,e_l,e_j,e_l)\\
	&=&\sum_{l=1}^{2n}\left(\widetilde{\rm Rm}(e_i,e_l,e_j,e_l)+B(e_i,e_j)B(e_l,e_l)-B(e_l,e_j)B(e_i,e_l)\right)\\
	&=&-(2n-1)\delta_{ij}-\left(B^2\right)(e_i,e_j)=-(2n-1+\kappa_i^2)\delta_{ij}\,.
\end{eqnarray*}
The third equality follows from the fact that the sectional curvature of the ambient space $\mathbb{H}^{2n+1}$ is $-1$, from the minimality of the hypersurface $M$ (and so ${\rm Trace}(B)=2nH=0$), and from the definition of $B^2$; while the last equality is a consequence of $\{e_1,...,e_{2n}\}$ being the frame of principal directions and, hence, $\left(B^2\right)(e_i,e_j)=\kappa_i^2\delta_{ij}$ where $\kappa_i$ are the principal curvatures. 

Substituting this in \eqref{Schouten} and employing the relation \eqref{lambdaB} to rewrite $\lambda$ in terms of $\lvert B\rvert^2$, we get
\begin{eqnarray*}
	P(e_i,e_j)&=&-\frac{2n-1+\kappa_i^2}{2(n-1)}\,\delta_{ij}-\frac{n}{2(n-1)}\left(-1-\frac{1}{2n(2n-1)}\lvert B\rvert^2\right)\delta_{ij}\\
	&=&\left(-\frac{1}{2}+\frac{1}{2(n-1)}\left(\kappa_i^2+\frac{n}{(2n-1)^2}\kappa^2\right)\right)\delta_{ij}\,,
\end{eqnarray*}
since $\lvert B\rvert^2=2n\kappa^2/(2n-1)$ as shown in \eqref{B2}. Recall that for a rotational minimal hypersurface, we have $\kappa_1=\kappa$ and $\kappa_2=\cdots=\kappa_{2n}=-\kappa/(2n-1)$ (see \eqref{pk}). Thus, we distinguish between these cases, obtaining
$$P_{11}=P(e_1,e_1)=-\frac{1}{2}\left(1+\frac{4n-1}{(2n-1)^2}\,\kappa^2\right),$$
and
$$P_{ii}=P(e_i,e_i)=-\frac{1}{2}\left(1-\frac{1}{(2n-1)^2}\,\kappa^2\right),$$
for every $i=2,...,2n$.

Finally, from the definition \eqref{powersum}, we compute
\begin{eqnarray}\label{pk2}
	p_l(P)&=&\sum_{i=1}^{2n} P_{ii}^l=P_{11}^l+\sum_{i=2}^{2n}P_{ii}^l\nonumber\\
	&=&\left(\frac{-1}{2}\right)^l\left(1+\frac{4n-1}{(2n-1)^2}\kappa^2\right)^l+(2n-1)\left(\frac{-1}{2}\right)^l\left(1-\frac{1}{(2n-1)^2}\kappa^2\right)^l\\
	&=&\left(\frac{-1}{2}\right)^l \sum_{j=0}^l \begin{pmatrix} l \\ j \end{pmatrix}\frac{(4n-1)^j \kappa^{2j}}{(2n-1)^{2j}}+(2n-1)\left(\frac{-1}{2}\right)^l\sum_{j=0}^l\begin{pmatrix} l \\ j\end{pmatrix}(-1)^j\frac{\kappa^{2j}}{(2n-1)^{2j}}\nonumber\\
	&=&\left(\frac{-1}{2}\right)^l\sum_{j=0}^l \begin{pmatrix} l \\ j \end{pmatrix}\frac{(4n-1)^j+(-1)^j(2n-1)}{(2n-1)^{2j}}\,\kappa^{2j}\,,\nonumber
\end{eqnarray}
proving the statement. \hfill$\square$
\\

Using the expression of the power sum polynomials $p_l(P)$ of the Schouten tensor obtained in Lemma \ref{pP}, and recursively applying the Newton--Girard's identities \eqref{NG}, we can express the symmetric polynomials $\sigma_l(P)$ of the Schouten tensor of a rotational minimal hypersurface $M$ in $\mathbb{H}^{2n+1}$ in terms of the curvature $\kappa$ of its generating curve $\gamma$.

\begin{lemma}\label{sigmaP} Let $M$ be a rotational minimal hypersurface in $\mathbb{H}^{2n+1}$ and denote by $P$ the Schouten tensor. The $l$-th elementary symmetric polynomial of $P$ is given by
	\begin{equation*}
		\sigma_l(P)=\frac{(-1)^l(2n)!}{2^l\, l! (2n-l)!}\left(1+\frac{(2l-1)\kappa^2}{(2n-1)^2}\right)\left(1-\frac{\kappa^2}{(2n-1)^2}\right)^{l-1}\,,
	\end{equation*}
	for every $l=1,...,n$. In particular,
	\begin{equation*}
		\sigma_n(P)=\frac{(-1)^n(2n)!}{2^n (n!)^2}\left(1+\frac{\kappa^2}{2n-1}\right)\left(1-\frac{\kappa^{2}}{(2n-1)^2}\right)^{n-1}\,.
	\end{equation*}
	In the above expressions $\kappa$ represents the curvature of the generating curve of the hypersurface $M$.
\end{lemma}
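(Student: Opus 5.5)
The shortest route avoids the Newton--Girard recursion and uses the diagonal form of the Schouten tensor found in the proof of Lemma \ref{pP}. In the frame of principal directions, $P$ is diagonal with one eigenvalue $P_{11}=-\frac{1}{2}(1+A)$ and the eigenvalue $P_{ii}=-\frac12(1-b)$ of multiplicity $2n-1$. Here
$$b=\frac{\kappa^2}{(2n-1)^2}\,,\qquad A=(4n-1)b\,.$$
The elementary symmetric polynomials of a diagonal matrix are elementary symmetric polynomials of its eigenvalues. Expanding $\prod_i(1+tP_{ii})$ and keeping the coefficient of $t^l$ gives
$$\sigma_l(P)=\binom{2n-1}{l}\mu^l+\binom{2n-1}{l-1}\nu\,\mu^{l-1},$$
with $\nu=P_{11}$ and $\mu=P_{ii}$, $i\ge2$. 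This is the whole combinatorial content.

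I then factor out $(-1/2)^l(1-b)^{l-1}$, which leaves
$$\binom{2n-1}{l}(1-b)+\binom{2n-1}{l-1}(1+(4n-1)b).$$
Next I use $\binom{2n-1}{l}=\frac{2n-l}{2n}\binom{2n}{l}$ and $\binom{2n-1}{l-1}=\frac{l}{2n}\binom{2n}{l}$. The constant term becomes $\binom{2n}{l}\frac{(2n-l)+l}{2n}=\binom{2n}{l}$. The coefficient of $b$ becomes
$$\binom{2n}{l}\frac{-(2n-l)+l(4n-1)}{2n}=\binom{2n}{l}\frac{2n(2l-1)}{2n}=\binom{2n}{l}(2l-1).$$
Together these give $\binom{2n}{l}\bigl(1+(2l-1)b\bigr)$, and therefore
$$\sigma_l(P)=\frac{(-1)^l(2n)!}{2^l\,l!\,(2n-l)!}\left(1+\frac{(2l-1)\kappa^2}{(2n-1)^2}\right)\left(1-\frac{\kappa^2}{(2n-1)^2}\right)^{l-1},$$
which is the claimed formula for every $l$ (in particular for $l=1,\dots,n$).

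For the particular case I set $l=n$. Since $(2n-1)/(2n-1)^2=1/(2n-1)$, the middle factor becomes $1+\kappa^2/(2n-1)$ and the prefactor becomes $(-1)^n(2n)!/(2^n(n!)^2)$, as stated.

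The only delicate point is the coefficient identity $-(2n-l)+l(4n-1)=2n(2l-1)$, which is an elementary check. As a sanity test I would verify $l=1$ against $p_1(P)$ from Lemma \ref{pP}, where $\sigma_1=p_1$ should give $-\frac12(2n+(4n-1-(2n-1))b)=-n(1+b)$, matching the formula. If one insists on following the paper's route through \eqref{NG}, the same result would follow by induction on $l$. I would present the direct eigenvalue argument instead, noting that it agrees with \eqref{NG}.
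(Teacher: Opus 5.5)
Your proof is correct, and it takes a genuinely different and more direct route than the paper.

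The paper argues by induction on $l$ through the Newton--Girard identities \eqref{NG}. It feeds the power sums \eqref{pk2} from Lemma \ref{pP} into the recursion, which reduces the inductive step to two auxiliary finite sums $\mathcal{S}_1$ and $\mathcal{S}_2$. Their closed forms are in turn only asserted to follow by a further induction.

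You bypass power sums altogether. You use the fact that $P$ has only two distinct eigenvalues, with multiplicities $1$ and $2n-1$, so $\sigma_l(P)$ is simply the coefficient of $t^l$ in $(1+t\nu)(1+t\mu)^{2n-1}$. Everything then collapses to the two binomial ratios and the single identity $-(2n-l)+l(4n-1)=2n(2l-1)$. All of these check out, as does your $l=1$ test.

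What your approach buys:
\begin{itemize}
\item It is shorter and fully self-contained, with no auxiliary summation identities left to establish.
\item It is valid for every $1\le l\le 2n$, not only $l\le n$.
\item It shows that only the eigenvalues $P_{11}$, $P_{ii}$ from the proof of Lemma \ref{pP} are needed, not its power-sum formula.
\end{itemize}
The paper's route is the general-purpose mechanism (power sums to elementary symmetric functions) and does not use the multiplicity structure explicitly. Here, though, it is strictly more laborious.

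One small caution when you cite Lemma \ref{pP}: the intermediate display there for $P(e_i,e_j)$ has a sign slip. It should read $P_{ii}=-\frac12+\frac{1}{2(n-1)}\bigl(-\kappa_i^2+\frac{n\kappa^2}{(2n-1)^2}\bigr)$. The final values of $P_{11}$ and $P_{ii}$ that you actually use are nevertheless correct, so your input is sound.
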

\textit{Proof.} The result follows by induction over $l$. We begin checking the case $l=1$. For this specific value, $\sigma_1(P)=p_1(P)=-n-n\kappa^2/(2n-1)^2$, which coincides with the formula in the statement.

We next assume the formula holds for every positive integer smaller than $l$, or equivalently, that
\begin{equation}\label{hypothesis}
	\sigma_{l-j}(P)=\frac{(-1)^{l-j}(2n)!}{2^{l-j}(2n-l+j)!(l-j)!}\left(1+\left(2l-2j-1\right)v\right)\left(1-v\right)^{l-j-1},
\end{equation}
holds for every $j=1,...,n$. Here, we have introduced the simplification $v=\kappa^2/(2n-1)^2$.

In order to prove the case $l$ (i.e., $j=0$), we will employ the Newton--Girard's identities \eqref{NG} and the expression of the power sum polynomials $p_j(P)$ of Lemma \ref{pP}. More precisely, we will use \eqref{pk2} for $j$ instead of $l$. Then, from \eqref{NG} we have
\begin{equation}\label{comp}
	\sigma_l(P)=\frac{1}{l}\sum_{j=1}^l (-1)^{j-1}\sigma_{l-j}(P)p_j(P)=\frac{(-1)^l (2n)!}{l\,2^l}\left(1-v\right)^{l-1}\left(\mathcal{S}_1+(2n-1)\mathcal{S}_2\right),
\end{equation}
where we have used \eqref{pk2}, the hypothesis of induction \eqref{hypothesis}, and defined the sums
$$\mathcal{S}_1=\sum_{j=1}^l (-1)^{j-1}\frac{1+(2l-2j-1)v}{(2n-l+j)!(l-j)!}\left(\frac{1+(4n-1)v}{1-v}\right)^j\,,$$
and
$$\mathcal{S}_2=\sum_{j=1}^l(-1)^{j-1}\frac{1+(2l-2j-1)v}{(2n-l+j)!(l-j)!}\,.$$

Observe that shifting the index of summation to $i=l-j$, the sums $\mathcal{S}_1$ and $\mathcal{S}_2$ can be explicitly computed, obtaining, respectively
\begin{equation}\label{sum1}
	\mathcal{S}_1=(-1)^{l-1}w^l\sum_{i=0}^{l-1}(-1)^i\frac{1+(2i-1)v}{i! (2n-i)! w^i}=\frac{1+(4n-1)v}{2n (l-1)!(2n-l)!}\,,
\end{equation}
where we are denoting by $w=(1+(4n-1)v)/(1-v)$, for simplicity, and
\begin{equation}\label{sum2}
	\mathcal{S}_2=(-1)^{l-1}\sum_{i=0}^{l-1}(-1)^i\frac{1+(2i-1)v}{i! (2n-i)!}=\frac{2n-1+(1-6n+4nl)v}{2n(2n-1)(l-1)!(2n-l)!}\,.
\end{equation}
The last equality of both \eqref{sum1} and \eqref{sum2}, respectively, can easily be shown by induction over $l$.

Finally, combining \eqref{sum1} and \eqref{sum2} with \eqref{comp}, we conclude with
\begin{eqnarray*}
	\sigma_l(P)&=&\frac{(-1)^l(2n)!}{l\,2^l}(1-v)^{l-1}\left(\frac{1+(4n-1)v}{2n(l-1)!(2n-l)!}+\frac{2n-1+(1-6n+4nl)v}{2n(l-1)!(2n-l)!}\right)\\
	&=&\frac{(-1)^l(2n)!}{l\,2^l}(1-v)^{l-1}\frac{2n(1+(2l-1)v)}{2n(l-1)!(2n-l)!}\,,
\end{eqnarray*}
which proves the first statement. For the second, we just particularize the above identity for $l=n$. \hfill$\square$
\\

Combining the Chern--Gauss--Bonnet formula \eqref{CGB} for catenoids $M_c$, $c\in(0,\infty)$, in $\mathbb{H}^{2n+1}$ with the expression of $\sigma_n(P)$ given in Lemma \ref{sigmaP}, we can obtain a general formula for the renormalized area of $M_c$.

\begin{theorem}\label{t4} Let $M_c$, $c\in(0,\infty)$, be a non-totally geodesic rotational minimal hypersurface in $\mathbb{H}^{2n+1}$. The renormalized area $\mathcal{A}_R$ of $M_c$ is given by the hyperelliptic integral
$$\mathcal{A}_R(M_c)=\frac{2\pi^n}{(n-1)!\,c^{2n-1}}\int_0^\alpha \frac{q_n(u)}{\sqrt{u(-u^{2n}+c^2 u+(2n-1)^2)}}\,du\,,$$
where $\alpha$ is the only positive solution of $Q_{n,c}(u)=-u^{2n}+c^2 u+(2n-1)^2=0$, and $q_n$ is the polynomial of degree $n(2n-1)$ given by
$$q_n(u)=\sum_{i=1}^n(-1)^{i}\begin{pmatrix} n \\ i \end{pmatrix}(2n-1)^{2(n-i)}(2i-1)u^{n(2i-1)}\,.$$
\end{theorem}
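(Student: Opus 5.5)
The plan is to run the argument of Theorems \ref{t2} and \ref{t3} in general dimension. We use the Chern--Gauss--Bonnet formula \eqref{CGB} to remove from the area integrand exactly the term proportional to $\sigma_n(P)$. What remains is a polynomial in $\kappa^2$ without constant term, and the Euler--Lagrange equation \eqref{ODE} turns it into a one-variable integral.

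\emph{Step 1: rewrite the constant $1$.} Write $v=\kappa^2/(2n-1)^2$. By Lemma \ref{sigmaP},
$$\sigma_n(P)=\frac{(-1)^n(2n)!}{2^n(n!)^2}\,(1+(2n-1)v)(1-v)^{n-1}.$$
With $a_n=(-1)^n2^n(n!)^2/(2n)!$ this gives
$$1=a_n\,\sigma_n(P)+\Big(1-(1+(2n-1)v)(1-v)^{n-1}\Big).$$
Integrate this identity over the truncated catenoid $M_\epsilon$. The first piece equals $-a_n\oint_{\partial M_\epsilon}S_n\,ds$, by \eqref{CGB} together with $\chi(M_\epsilon)=0$. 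By Lemma \ref{lemma}, this boundary term has no constant term in its $\epsilon$-expansion. Hence
$$\mathcal{A}_R(M_c)=\int_{M_c}\Big(1-(1+(2n-1)v)(1-v)^{n-1}\Big)\,dA.$$
This identity holds provided the right-hand side converges, in which case its value is the constant term of the corresponding integral over $M_\epsilon$.

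\emph{Step 2: expand the polynomial.} The coefficient of $v^i$ in $1-(1+(2n-1)v)(1-v)^{n-1}$ is
$$-(-1)^i\Big[\binom{n-1}{i}-(2n-1)\binom{n-1}{i-1}\Big].$$
Using $\binom{n-1}{i}=\frac{n-i}{n}\binom{n}{i}$ and $\binom{n-1}{i-1}=\frac{i}{n}\binom{n}{i}$, this coefficient equals $(-1)^i\binom{n}{i}(2i-1)$. The $i=0$ terms cancel. Therefore
$$\mathcal{A}_R(M_c)=\sum_{i=1}^n(-1)^i\binom{n}{i}\frac{2i-1}{(2n-1)^{2i}}\int_{M_c}\kappa^{2i}\,dA.$$
As a consistency check, this reproduces the formulas of Theorems \ref{t2} and \ref{t3} for $n=1,2$.

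\emph{Step 3: reduce to the generating curve.} By Theorem \ref{t1}, $M_c=\gamma_c\times_f\mathbb{S}^{2n-1}$ with $f=(2n-1)/(c\sqrt u)$ and $\kappa=u^n$. So $dA=f^{2n-1}\,ds\,d\overline{A}$, and $\mathrm{vol}(\mathbb{S}^{2n-1})=2\pi^n/(n-1)!$. The curve is symmetric by Proposition \ref{prop}, so we integrate over $s\in(0,\infty)$ and double. On that half, $u$ decreases from $\alpha$ to $0$, and \eqref{ODE} gives
$$ds=-\frac{(2n-1)\,du}{2u\sqrt{Q_{n,c}(u)}}.$$
Then $\kappa^{2i}\,dA$ contributes a factor $u^{2ni}\,u^{-(2n-1)/2}\,u^{-1}Q_{n,c}^{-1/2}$, which equals $u^{n(2i-1)}/\sqrt{u\,Q_{n,c}(u)}$. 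Collecting constants, $(2n-1)^{2n-1}\cdot(2n-1)\cdot(2n-1)^{-2i}=(2n-1)^{2(n-i)}$. The factors $2$ (from symmetry) and $\tfrac12$ (from $ds$) cancel. This yields the prefactor $2\pi^n/((n-1)!\,c^{2n-1})$ and the polynomial $q_n$ exactly as in the statement.

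The main obstacle is justifying Step 1 rigorously. We must show that the remaining integral over $M_c$ converges and that its truncation has no divergent terms, so that the constant term of $\int_{M_\epsilon}$ equals $\int_{M_c}$. I would settle this after the change of variables. Near $u=0$ (the ideal boundary) the integrand behaves like $u^{n}/\sqrt u$, which is integrable. Near $u=\alpha$ it behaves like $(\alpha-u)^{-1/2}$, also integrable since $\alpha$ is a simple root of $Q_{n,c}$. Hence $\int_{M_\epsilon}$ differs from $\int_{M_c}$ by $o(1)$ as $\epsilon\to0^+$. One must also check that the cutoff $z\le 1/\epsilon$ corresponds to a cutoff $u\ge u_\epsilon\to0$ on $\gamma_c$, which follows from \eqref{param}.
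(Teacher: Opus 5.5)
Your proposal is correct and follows essentially the same route as the paper. You use the Chern--Gauss--Bonnet formula with $\chi(M_\epsilon)=0$ and Lemma \ref{lemma} to discard the $\sigma_n(P)$ contribution, expand Lemma \ref{sigmaP} binomially (your coefficient $(-1)^i\binom{n}{i}(2i-1)$ agrees with the paper's), and change variables via the warped-product volume element and \eqref{ODE}; your explicit argument that the constant term of the truncated integral equals the convergent integral over $M_c$ is a bit more careful than the paper, which defers convergence to Remark \ref{convergence}.
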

\textit{Proof.} Let $M_c$, $c\in(0,\infty)$, be a catenoid in $\mathbb{H}^{2n+1}$. The $n$-th elementary symmetric polynomial of the Schouten tensor $P$ is given in Lemma \ref{sigmaP}. Expanding that expression by means of the binomial theorem, we compute
\begin{eqnarray*}
	\frac{(-2)^n(n!)^2}{(2n)!}\,\sigma_n(P)&=&\left(1+\frac{\kappa^2}{2n-1}\right)\sum_{i=0}^{n-1}\begin{pmatrix} n-1 \\ i \end{pmatrix}(-1)^i\frac{\kappa^{2i}}{(2n-1)^{2i}}\\
	&=&\sum_{i=0}^{n-1}\begin{pmatrix} n-1 \\ i \end{pmatrix}(-1)^i\frac{\kappa^{2i}}{(2n-1)^{2i}}+\sum_{j=0}^{n-1}\begin{pmatrix} n-1 \\ j \end{pmatrix}(-1)^j\frac{\kappa^{2j+2}}{(2n-1)^{2j+1}}\\
	&=&1+\sum_{i=1}^{n-1}\begin{pmatrix} n-1 \\ i \end{pmatrix}\frac{(-1)^i\kappa^{2i}}{(2n-1)^{2i}}-(2n-1)\sum_{i=1}^n\begin{pmatrix} n-1 \\ i-1 \end{pmatrix}\frac{(-1)^i\kappa^{2i}}{(2n-1)^{2i}}\\
	&=&1-(-1)^n\frac{\kappa^{2n}}{(2n-1)^{2n-1}}-\sum_{i=1}^{n-1}\begin{pmatrix} n \\ i \end{pmatrix} (-1)^i(2i-1)\frac{\kappa^{2i}}{(2n-1)^{2i}}\,.
\end{eqnarray*}
Of course, when $n=1$, it must be understood that the second term after the third equality does not appear.

For $\epsilon>0$ sufficiently small, denote by $M_\epsilon=M_c\cap \{z\leq 1/\epsilon\}$ the truncated catenoid. Integrating the above relation over $M_\epsilon$ and employing the Chern--Gauss--Bonnet formula \eqref{CGB}, we obtain (after rearranging)
$$\mathcal{A}(M_\epsilon)=-\frac{(-2)^n(n!)^2}{(2n)!}\oint_{\partial M_\epsilon} S_n\,ds+\frac{(-1)^n}{(2n-1)^{2n-1}}\int_{M_\epsilon} \kappa^{2n}dA+\sum_{i=1}^{n-1}\frac{(-1)^i(2i-1)}{(2n-1)^{2i}}\begin{pmatrix} n \\ i \end{pmatrix}\int_{M_\epsilon}\kappa^{2i}dA.$$
Hence, looking at the constant term when expanding $\mathcal{A}(M_\epsilon)$ in terms of $\epsilon>0$, we deduce that
$$\mathcal{A}_R(M_c)=\frac{(-1)^n}{(2n-1)^{2n-1}}\int_{M_c}\kappa^{2n}\,dA+\sum_{i=1}^{n-1}\frac{(-1)^i(2i-1)}{(2n-1)^{2i}}\begin{pmatrix} n \\ i \end{pmatrix} \int_{M_c}\kappa^{2i}\,dA\,,$$
since from Lemma \ref{lemma}, the boundary integral does not have a constant term. To show that the above expression is convergent we will first transform the integrals on the right-hand side into those of the statement. Then, the convergence will follow as in Remark \ref{convergence}.

Recall now that Theorem \ref{t1} shows that $M_c$ is the warped product hypersurface $\gamma_c\times_f \mathbb{S}^{2n-1}$, where the generating curve $\gamma_c$ is a $n$-catenary with curvature $\kappa=u^n$ and $f=(2n-1)(c^2u)^{-1/2}$. It then follows that $dA=(2n-1)^{2n-1}(c^2u)^{-(2n-1)/2}d\overline{A}\,ds$, where $d\overline{A}$ is the (Euclidean) area element on the round sphere $\mathbb{S}^{2n-1}$. Since the area of $\mathbb{S}^{2n-1}$ is $2\pi^n/(n-1)!$, we obtain
\begin{eqnarray*}
	\mathcal{A}_R(M_c)&=&\frac{2(-\pi)^n}{(n-1)!c^{2n-1}}\int_{\gamma_c} u^{(4n^2-2n+1)/2}ds\\
	&&+\frac{2\pi^n}{(n-1)!c^{2n-1}}\sum_{i=1}^{n-1}(-1)^i(2i-1)(2n-1)^{2(n-i)-1}\begin{pmatrix} n \\ i \end{pmatrix}\int_{\gamma_c}u^{(4in-2n+1)/2}ds\\
	&=&\frac{2\pi^n}{(n-1)!c^{2n-1}}\int_0^\alpha \frac{q_n(u)}{\sqrt{u(-u^{2n}+c^2u+(2n-1)^2)}}\,du\,,
\end{eqnarray*}
where $q_n(u)$ are the polynomials defined on the statement. For the last equality above we have employed the Euler--Lagrange equation \eqref{ODE} to make a change of variable. \hfill$\square$

\begin{rem}\label{convergence} For every $c\in(0,\infty)$ and positive integer $n$ fixed, the hyperelliptic integral in the statement of Theorem \ref{t4} is convergent. 
	
In fact, even though the integrand $q_n(u)/\sqrt{u\,Q_{n,c}(u)}$ is singular at $u=0$ and $u=\alpha$, one can see that for $\epsilon>0$ sufficiently small, there exist constants $C,\widetilde{C}\in\mathbb{R}$ such that
$$\left| \frac{q_n(u)}{\sqrt{u\,Q_{n,c}(u)}}\right| \leq C\,u^{n-1/2}\,,$$
for every $u\in(0,\epsilon)$, and
$$\left| \frac{q_n(u)}{\sqrt{u\,Q_{n,c}(u)}}\right| \leq \widetilde{C}\left(\alpha-u\right)^{-1/2}\,,$$
for every $u\in(\alpha-\epsilon,\alpha)$. Therefore, $q_n(u)/\sqrt{u\,Q_{n,c}(u)}\in\mathcal{L}^1(0,\alpha)$.
\end{rem}

In the specific cases $n=1$ and $n=2$, the expression of the renormalized area $\mathcal{A}_R(M_c)$ given in Theorem \ref{t4} reduces to \eqref{test1} and \eqref{renA4}, respectively.

Furthermore, numerically representing $\mathcal{A}_R(M_c)$ as a function of $c>0$ for fixed values of $n$, the reader can observe that the renormalized area of catenoids in $\mathbb{H}^{2n+1}$ increases from $-\infty$ (this assertion will be shown in Theorem \ref{t5} below), attains its maximum precisely when the catenoid attains its maximum height $\Psi_n$, and then decreases while approaching $2(-4\pi)^nn!/(2n)!$ (see Theorem \ref{t5}) which is twice the renormalized area of $\mathbb{H}^{2n}$ (see Appendix A for details). In particular, when $n$ is even this quantity is positive and so the renormalized area of minimal hypersurfaces does not have a sign. In the following result we prove this statement.

\begin{theorem}\label{t5} The renormalized area $\mathcal{A}_R$ of non-totally geodesic rotational minimal hypersurfaces $M_c$, $c\in(0,\infty)$, in $\mathbb{H}^{2n+1}$ varies continuously from $-\infty$, when $c\to 0^+$, to
$$\lim_{c\to \infty}\mathcal{A}_R(M_c)=2\mathcal{A}_R(\mathbb{H}^{2n})=2\frac{(-4\pi)^nn!}{(2n)!}\,.$$
	
Consequently, the renormalized area of hypersurfaces in $\mathbb{H}^{2n+1}$ is not bounded below. Moreover, when $n$ is even, the renormalized area of minimal hypersurfaces does not have a sign.
\end{theorem}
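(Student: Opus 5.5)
The plan is to start from the hyperelliptic expression of Theorem \ref{t4} and rescale the integration variable so that the interval no longer moves with $c$. Setting $u=\alpha(c)\,x$ with $x\in(0,1)$, the integral becomes
\[
\int_0^1 \frac{q_n(\alpha x)\,\alpha\,dx}{\sqrt{\alpha x\,Q_{n,c}(\alpha x)}}.
\]
Since $\alpha(c)$ is a simple root of $Q_{n,c}$, it depends smoothly on $c$ by the implicit function theorem. Moreover, $Q_{n,c}(\alpha x)$ vanishes to first order at $x=1$ uniformly for $c$ in compact subsets of $(0,\infty)$. Hence the bounds of Remark \ref{convergence} hold with constants uniform on such compact sets, namely $C\,x^{n-1/2}$ near $0$ and $\widetilde C(1-x)^{-1/2}$ near $1$. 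Dominated convergence then gives continuity of $\mathcal{A}_R(M_c)$ in $c$.

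\emph{Limit $c\to 0^+$.} Here $\alpha\to(2n-1)^{1/n}$, and by dominated convergence the integral tends to
\[
I_n=\int_0^{(2n-1)^{1/n}}\frac{q_n(u)\,du}{\sqrt{u\,((2n-1)^2-u^{2n})}}.
\]
Since the prefactor $c^{-(2n-1)}$ tends to $+\infty$, it suffices to show that $I_n<0$. I would mimic the case $n=2$ and substitute $u=(2n-1)^{1/n}x^{1/(2n)}$. Each monomial $u^{n(2i-1)}$ becomes $(2n-1)^{2i-1}x^{i-1/2}$, so $I_n$ is a positive constant times
\[
\int_0^1 x^{\frac1{4n}-\frac32}(1-x)^{-1/2}\sum_{i=1}^n(-1)^i\binom ni(2i-1)x^{i}\,dx .
\]
The sum equals $1-(1-x)^n-2nx(1-x)^{n-1}$, which is $O(x^2)$ near $0$, so the integral converges. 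I would then evaluate it through Beta functions, termwise via analytic continuation or after one integration by parts, to obtain a Gamma-quotient of definite negative sign. This generalizes the value $-2\cdot3^{1/4}\sqrt\pi\,\Gamma(5/8)/\Gamma(1/8)$ found for $n=2$. I expect this sign verification to be the main obstacle. If the Gamma bookkeeping becomes messy, I would instead integrate by parts against $\frac{d}{dx}\bigl(x^{1/(4n)-1/2}\bigr)$ to exhibit the integrand as a nonpositive function.

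\emph{Limit $c\to\infty$.} In this regime $\alpha\sim c^{2/(2n-1)}$, and I would substitute $u=c^{2/(2n-1)}x$. Exponent counting shows that the leading term $(-1)^n(2n-1)u^{n(2n-1)}$ of $q_n$, divided by $\sqrt{u(c^2u-u^{2n})}$, scales exactly like $c^{2n-1}$. The lower-order terms of $q_n$ and the constant $(2n-1)^2$ in $Q_{n,c}$ carry negative powers of $c$, so they vanish in the limit by dominated convergence, with the same endpoint bounds as before. The limit is therefore
\[
\frac{2\pi^n(-1)^n(2n-1)}{(n-1)!}\int_0^1\frac{x^{n(2n-1)-1}}{\sqrt{1-x^{2n-1}}}\,dx
=\frac{2(-\pi)^n}{(n-1)!}\,\beta\!\left(n,\tfrac12\right),
\]
where the equality uses the substitution $y=x^{2n-1}$. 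Using $\Gamma(n+\tfrac12)=(2n)!\sqrt\pi/(4^nn!)$, this gives $2(-4\pi)^nn!/(2n)!$, which is $2\mathcal{A}_R(\mathbb{H}^{2n})$ by Appendix A.

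\emph{Consequences.} By continuity and the two limits, $\mathcal{A}_R(M_c)$ takes every value in $(-\infty,2\mathcal{A}_R(\mathbb{H}^{2n}))$, so it is not bounded below. When $n$ is even the upper limit is positive, so some catenoids have positive renormalized area while others have negative renormalized area. Hence the renormalized area of minimal hypersurfaces has no sign.
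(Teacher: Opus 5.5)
Your outline follows the paper's proof. For $c\to0^+$, the integral tends to a finite negative limit and is multiplied by the exploding prefactor $c^{-(2n-1)}$; for $c\to\infty$, you rescale $u=c^{2/(2n-1)}x$ and use dominated convergence and $\beta(n,\frac12)$. Your continuity argument via $u=\alpha x$ is more careful than the paper's bare assertion. The same normalization is also the clean way to handle the fact that the rescaled upper limit $\alpha c^{-2/(2n-1)}$ is not exactly $1$ in the $c\to\infty$ limit. The $c\to\infty$ computation and the consequences are fine.

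The gap is the one substantive step of the $c\to0^+$ limit. You never prove $I_n<0$; you only predict that some evaluation will give a negative Gamma quotient. Your fallback would not deliver it. Note first that $S(x)=1-(1-x)^n-2nx(1-x)^{n-1}$ equals $-nx+O(x^2)$ near $0$, not $O(x^2)$. This does not affect convergence, since $x^{\frac1{4n}-\frac32}\cdot x$ is integrable. But $S(1)=1$ for $n\ge2$, so the integrand changes sign. After integrating by parts against $x^{\frac1{4n}-\frac12}$, the sign is governed by $\frac{d}{dx}\bigl((1-x)^{-1/2}S(x)\bigr)$, which is negative near $0$ and positive near $1$. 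So no nonpositive integrand appears.

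Your first route does close the gap, and quickly. Put $a=\frac1{4n}-\frac12\in(-\frac12,0)$. For $a>0$ the three pieces of $S$ integrate separately:
\[
\int_0^1 x^{a-1}(1-x)^{-1/2}S(x)\,dx=\beta\bigl(a,\tfrac12\bigr)-\beta\bigl(a,n+\tfrac12\bigr)-2n\,\beta\bigl(a+1,n-\tfrac12\bigr).
\]
The left-hand side is analytic for $\mathrm{Re}\,a>-1$ because $S(x)=O(x)$, so the identity persists at $a=\frac1{4n}-\frac12$. Moreover,
$\beta(a,n+\frac12)+2n\,\beta(a+1,n-\frac12)=\frac{\Gamma(a)\Gamma(n-\frac12)}{\Gamma(a+n+\frac12)}\bigl(n-\frac12+2na\bigr)=0$.
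Hence the integral equals $\sqrt{\pi}\,\Gamma(\frac1{4n}-\frac12)/\Gamma(\frac1{4n})$, which is negative because $\Gamma<0$ on $(-1,0)$. Multiplied by your constant $(2n-1)^{2n-2+\frac1{2n}}/(2n)$, this is exactly the paper's value in \eqref{hyp}.

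The paper instead integrates the sum term by term into $\beta(i-\frac12+\frac1{4n},\frac12)$. It then evaluates the alternating sum through Gauss's theorem for two terminating ${}_2F_1(-n,\cdot\,;\cdot\,;1)$. Your closed form for $S$ collapses that computation to a single cancellation, but it has to be carried out for the proof to be complete.
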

\textit{Proof.} Consider the family of catenoids $M_c$, $c>0$, in $\mathbb{H}^{2n+1}$. We will see that $\mathcal{A}_R(M_c)\to-\infty$ as $c\to 0^+$ and, hence, the renormalized area of hypersurfaces is not bounded below. 

To compute the limit when $c\to 0^+$ of the expression of $\mathcal{A}_R(M_c)$ given in Theorem \ref{t4}, we will introduce the integrals
$$J_m(c)=\int_0^\alpha \frac{u^m}{\sqrt{u\,Q_{n,c}(u)}}\,du\,,$$
where $\alpha$ is the only positive root of the polynomial $Q_{n,c}(u)=-u^{2n}+c^2u+(2n-1)^2$, and $m$ is any positive integer. Then, we compute
\begin{eqnarray*}\lim_{c\to 0^+} J_m(c)&=&\int_0^{(2n-1)^{1/n}}\frac{u^m}{\sqrt{u(-u^{2n}+(2n-1)^2)}}\,du=\frac{(2n-1)^{\frac{2m-2n+1}{2n}}}{2n}\,\beta\left(\frac{2m+1}{4n},\frac{1}{2}\right)\\
&=&\frac{(2n-1)^{\frac{2m-2n+1}{2n}}\sqrt{\pi}\,\Gamma\left(\frac{2m+1}{4n}\right)}{2n\,\Gamma\left(\frac{2m+2n+1}{4n}\right)}\,,
\end{eqnarray*}
where the second equality follows after the change of variable $u=(2n-1)^{1/n}x^{1/(2n)}$, and the last equality is just the standard relation between the Beta $\beta(\cdot,\cdot)$ and Gamma $\Gamma(\cdot)$ functions together with $\Gamma(1/2)=\sqrt{\pi}$.

It then follows, applying the above limit of $J_m(c)$ for $m=n(2i-1)$ and the definition of the polynomials $q_n(u)$ given in the statement of Theorem \ref{t4}, that
\begin{eqnarray}\label{hyp}
	\lim_{c\to 0^+}\int_0^\alpha \frac{q_n(u)}{\sqrt{u\,Q_{n,c}(u)}}\,du&=&\sum_{i=1}^n(-1)^i\begin{pmatrix} n \\ i \end{pmatrix}(2n-1)^{2(n-i)}(2i-1)\lim_{c\to 0^+}J_{n(2i-1)}(c)\nonumber\\
	&=&\frac{(2n-1)^{\frac{(2n-1)^2}{2n}}\sqrt{\pi}}{2n}\sum_{i=1}^n(-1)^i\begin{pmatrix} n \\ i\end{pmatrix}(2i-1)\frac{\Gamma\left(i-\frac{1}{2}+\frac{1}{4n}\right)}{\Gamma\left(i+\frac{1}{4n}\right)}\nonumber\\
	&=&-2(2n-1)^{2n-3+\frac{1}{2n}}\,\sqrt{\pi}\,\frac{\Gamma\left(\frac{2n+1}{4n}\right)}{\Gamma\left(\frac{1}{4n}\right)}<0\,.
\end{eqnarray}
The last equality follows since the sum in the second line can be decomposed as a combination of two terminating hypergeometric functions $_2F_1(-n,\cdot,\cdot,1)$ which can be computed using Gauss' hypergeometric theorem. More precisely, rewriting
$$2i-1=2\left(i-\frac{1}{2}+\frac{1}{4n}\right)-\frac{1}{2n}\,,$$
and employing that $\Gamma(x+1)=x\,\Gamma(x)$, we have that the sum in the second line above equals
$$2\sum_{i=1}^n (-1)^i\begin{pmatrix} n \\ i\end{pmatrix}\frac{\Gamma\left(i+\frac{1}{2}+\frac{1}{4n}\right)}{\Gamma\left(i+\frac{1}{4n}\right)}-\frac{1}{2n}\sum_{i=1}^n(-1)^i\begin{pmatrix} n\\ i\end{pmatrix}\frac{\Gamma\left(i-\frac{1}{2}+\frac{1}{4n}\right)}{\Gamma\left(i+\frac{1}{4n}\right)}\,.$$
Each of these sums can in turn be rewritten as terminating hypergeometric functions, obtaining
$$\frac{2\Gamma\left(\frac{1}{2}+\frac{1}{4n}\right)}{\Gamma\left(\frac{1}{4n}\right)}\left(_2F_1\left(-n,\frac{1}{4n}+\frac{1}{2},\frac{1}{4n},1\right)-1\right)-\frac{\Gamma\left(-\frac{1}{2}+\frac{1}{4n}\right)}{2n\,\Gamma\left(\frac{1}{4n}\right)}\left(_2F_1\left(-n,\frac{1}{4n}-\frac{1}{2},\frac{1}{4n},1\right)-1\right).$$
Using Gauss' summation to compute the two hypergeometric functions above, we see that these terms cancel out. In fact,
\begin{eqnarray*}
	2\Gamma\left(\frac{1}{2}+\frac{1}{4n}\right)\,_2F_1\left(-n,\frac{1}{4n}+\frac{1}{2},\frac{1}{4n},1\right)-\frac{1}{2n}\Gamma\left(-\frac{1}{2}+\frac{1}{4n}\right)\,_2F_1\left(-n,\frac{1}{4n}-\frac{1}{2},\frac{1}{4n},1\right)&=&\\
	=2\Gamma\left(\frac{1}{2}+\frac{1}{4n}\right)\frac{\Gamma\left(\frac{1}{4n}\right)\Gamma\left(n-\frac{1}{2}\right)}{\Gamma\left(n+\frac{1}{4n}\right)\Gamma\left(-\frac{1}{2}\right)}-\frac{1}{2n}\Gamma\left(-\frac{1}{2}+\frac{1}{4n}\right)\frac{\Gamma\left(\frac{1}{4n}\right)\Gamma\left(n+\frac{1}{2}\right)}{\Gamma\left(n+\frac{1}{4n}\right)\Gamma\left(\frac{1}{2}\right)}&=&\\
	=\frac{\Gamma\left(\frac{1}{4n}\right)\Gamma\left(n-\frac{1}{2}\right)\Gamma\left(-\frac{1}{2}+\frac{1}{4n}\right)}{\sqrt{\pi}\,\Gamma\left(n+\frac{1}{4n}\right)}\left(-\left(\frac{1}{4n}-\frac{1}{2}\right)-\frac{1}{2n}\left(n-\frac{1}{2}\right)\right)&=&0\,,
\end{eqnarray*}
since $\Gamma(x+1)=x\,\Gamma(x)$, $\Gamma(1/2)=\sqrt{\pi}$, and $\Gamma(-1/2)=-2\sqrt{\pi}$.

Consequently,
\begin{eqnarray*}
	\sum_{i=1}^n (-1)^i\begin{pmatrix} n \\ i \end{pmatrix}(2i-1)\frac{\Gamma\left(i-\frac{1}{2}+\frac{1}{4n}\right)}{\Gamma\left(i+\frac{1}{4n}\right)}&=&\frac{\Gamma\left(-\frac{1}{2}+\frac{1}{4n}\right)}{2n\,\Gamma\left(\frac{1}{4n}\right)}-2\frac{\Gamma\left(\frac{1}{2}+\frac{1}{4n}\right)}{\Gamma\left(\frac{1}{4n}\right)}\\
	&=&\frac{\Gamma\left(\frac{1}{2}+\frac{1}{4n}\right)}{\Gamma\left(\frac{1}{4n}\right)}\left(\frac{1}{2n\left(-\frac{1}{2}+\frac{1}{4n}\right)}-2\right)\\
	&=&\frac{-4n}{2n-1}\frac{\Gamma\left(\frac{1}{2}+\frac{1}{4n}\right)}{\Gamma\left(\frac{1}{4n}\right)}\,,
\end{eqnarray*}
and so \eqref{hyp} holds.

Since $\mathcal{A}_R(M_c)$ has a coefficient $1/c^{2n-1}$ in front of the integral, it tends to $-\infty$ when $c\to 0^+$. This shows that the renormalized area is not bounded below and that there exist (minimal) hypersurfaces for which it is negative.

For the second assertion, we will compute the limit when $c\to\infty$ of $\mathcal{A}_R(M_c)$. Observe first that the positive solution $\alpha$ of $Q_{n,c}(u)=0$, is asymptotically $\alpha\simeq c^{2/(2n-1)}$. We can then apply the dominated convergence theorem and the change of variable $u=c^{2/(2n-1)}x$ to obtain
$$\lim_{c\to\infty}\mathcal{A}_R(M_c)=\lim_{c\to\infty} \frac{2\pi^n}{(n-1)!c^{2n}}\int_0^1 \frac{q_n(c^{2/(2n-1)}x)}{x\sqrt{1-x^{2n-1}}}\,dx=\frac{2(-\pi)^n}{(n-1)!}\int_0^1 \frac{x^{2n^2-n-1}}{\sqrt{1-x^{2n-1}}}\,dx\,,$$
since only the integral with the term of $u^{n(2n-1)}$ will not cancel out when $c\to\infty$. We next make the change of variable $y=x^{2n-1}$, to get
$$\lim_{c\to\infty}\mathcal{A}_R(M_c)=\frac{2(-\pi)^n}{(n-1)!}\int_0^1\frac{y^{n-1}}{\sqrt{1-y}}\,dy=\frac{2(-\pi)^n}{(n-1)!}\,\beta(n,1/2)=\frac{2(-\pi)^n\sqrt{\pi}}{\Gamma\left(n+1/2\right)}\,.$$
A simple argument by induction can be used to show that
$$\Gamma\left(n+\frac{1}{2}\right)=\frac{(2n)!\sqrt{\pi}}{4^n n!}\,,$$
which combined with the above gives
$$\lim_{c\to\infty}\mathcal{A}_R(M_c)=2\frac{(-4\pi)^nn!}{(2n)!}\,.$$
If $n$ is even, the above quantity is positive and, hence, there exist catenoids $M_c$ (with $c>0$ sufficiently large) in $\mathbb{H}^{2n+1}$, with $n$ even, for which the renormalized area is positive. This in combination with the limit $c\to 0^+$ proves the second statement. \hfill$\square$

\section*{Appendix A. Renormalized Area of Hyperbolic Spaces}

The purpose of this appendix is to adapt our computations regarding catenoids to obtain a formula for the renormalized area of even-dimensional hyperbolic spaces $\mathbb{H}^{2n}$. This renormalized area has been previously studied in \cite{GT,CQY,E}.

Let $\mathbb{H}^{2n}$ be a totally geodesic hypersurface of $\mathbb{H}^{2n+1}$. Most of the computations of Section 6 still valid in this context. More precisely, Lemmas \ref{lemma}, \ref{pP}, and \ref{sigmaP} hold for a totally geodesic rotational (minimal) hypersurface in $\mathbb{H}^{2n+1}$. The hypersurface $\mathbb{H}^{2n}$ can be considered as a rotational minimal hypersurface of $\mathbb{H}^{2n+1}$ where the curvature $\kappa$ of the generating curve is identically zero (in other words, the generating curve is a geodesic). From this observation and Lemma \ref{sigmaP}, we obtain that the $n$-th elementary symmetric polynomial of the Schouten tensor is
$$\sigma_n(P)=\frac{(-1)^n(2n)!}{2^n(n!)^2}\,.$$
Therefore, integrating over the truncated hypersurface $\mathbb{H}_\epsilon^{2n}=\mathbb{H}^{2n}\cap\{z\leq 1/\epsilon\}$, we get
$$\mathcal{A}(\mathbb{H}^{2n}_\epsilon)=\frac{(-2)^n(n!)^2}{(2n)!}\int_{\mathbb{H}^{2n}_\epsilon} \sigma_n(P)\,dA=\frac{(-2)^n(n!)^2}{(2n)!}\left(\frac{(2\pi)^n}{n!}-\oint_{\partial\mathbb{H}_\epsilon^{2n}} S_n\,ds\right),$$
where we have applied the Chern--Gauss--Bonnet formula \eqref{CGB}, with $\chi(\mathbb{H}_\epsilon^{2n})=1$.

From Lemma \ref{lemma}, the boundary integral above has no constant term when expanding $\mathcal{A}(\mathbb{H}_\epsilon^{2n})$ in terms of $\epsilon>0$ and so, we conclude that the renormalized area of the even-dimensional hyperbolic spaces $\mathbb{H}^{2n}$ is given by
$$\mathcal{A}_R(\mathbb{H}^{2n})=\frac{(-4\pi)^n n!}{(2n)!}\,.$$
Compare with Corollary 3.5 of \cite{CQY} and \cite{E}.

\bigskip

\begin{flushleft}
	\'Alvaro P{\footnotesize \'AMPANO}\\
	Department of Mathematics and Statistics, Texas Tech University, Lubbock, TX, 79409, USA\\
	E-mail: alvaro.pampano@ttu.edu
\end{flushleft}

\begin{flushleft}
	Aaron J. T{\footnotesize YRRELL}\\
	Department of Mathematics, University of Notre Dame, Notre Dame, IN 46556, USA\\
	E-mail: atyrrell@nd.edu
\end{flushleft}

\end{document}